\documentclass[12pt]{amsart}
\usepackage{amsmath, amsthm, amssymb, amsfonts}
\usepackage{mathtools}
\usepackage{hyperref}
\usepackage{geometry}
\usepackage{booktabs}
\usepackage{array}
\usepackage{enumitem}
\usepackage{xcolor}
\DeclareFontFamily{U}{wncy}{}
\DeclareFontShape{U}{wncy}{m}{n}{<->wncyr10}{}
\DeclareSymbolFont{mcy}{U}{wncy}{m}{n}
\DeclareMathSymbol{\Sh}{\mathord}{mcy}{"58}

\newtheorem{theorem}{Theorem}[section]
\newtheorem{proposition}[theorem]{Proposition}

\newtheorem{corollary}[theorem]{Corollary}
\theoremstyle{plain}
\newtheorem{theoremintro}{Theorem}

\theoremstyle{definition}
\newtheorem{definition}[theorem]{Definition}

\theoremstyle{remark}
\newtheorem{remark}[theorem]{Remark}

\newcommand{\Q}{\mathbb{Q}}

\newcommand{\F}{\mathbb{F}}

\newcommand{\GL}{\mathrm{GL}}
\newcommand{\ad}{\mathrm{ad}}
\newcommand{\adz}{\mathrm{ad}^0}
\newcommand{\ShaOne}{\Sh^1}

\newcommand{\Frob}{\mathrm{Frob}}
\newcommand{\rhobar}{\bar{\rho}}
\newcommand{\epsl}{\bar{\varepsilon}_\ell}
\newcommand{\epsp}{\varepsilon_p}
\newcommand{\Runiv}{R^{\mathrm{univ}}}
\newcommand{\Wk}{W(\kappa)}
\newcommand{\mR}{\mathfrak{m}_R}

\title{A Local-Global Study of Obstructed Deformation Problems II}
\author{Bartu Bingol}
\address{Arizona State University, Department of Mathematics and Statistics,
Tempe, AZ 85287, USA}
\email{bbingol@asu.edu}

\date{\today}

\keywords{Galois representations; Galois deformations; modular forms; Poitou-Tate duality; obstructed deformations; inertial types; Greenberg-Wiles formula; congruence primes.}

\subjclass[2020]{11F80, 11F33, 11G05, 11S37}

\begin{document}

\begin{abstract}
We continue the local-global study of obstructed deformation problems for two-dimensional residual Galois representations arising from weight $2$ newforms of level $N$, initiated in \cite{Bin26}. Using the Greenberg-Wiles formula and the explicit classification of inertial Weil-Deligne types due to Demb\'el\'e-Freitas-Voight \cite{DFV22},  we systematically compute the local obstruction groups $H^0(G_p, \bar{\varepsilon}_\ell \otimes \mathrm{ad}^0\bar{\rho})$ for every inertial type arising at primes $p$ with $p^2 \mid N$ and $p \neq \ell$. For each type and in each of three arithmetic cases ($p \not\equiv \pm 1$, $p \equiv -1$, and $p \equiv 1 \pmod{\ell}$), we give the dimension of the local obstruction group and an explicit presentation of the universal deformation ring as a power series ring over the Witt vectors modulo explicit relations. We treat in detail the twisted Steinberg case ($\tau \simeq \tau_{\mathrm{St},p} \otimes \varepsilon_p$), the principal series cases, and the non-exceptional supercuspidal cases, including the full family of types at $p = 3$.
\end{abstract}

\maketitle

\section{Introduction}
\subsection{Background and motivation}

The deformation theory of Galois representations,  started by Mazur \cite{Maz87}, associates to a residual representation $\rhobar : G_{\Q,S} \to \GL_2(\kappa)$ a universal deformation ring $\Runiv$ whose structure encodes arithmetic information.When $\Runiv$ is a power series ring, the deformation problem is \emph{unobstructed}, and one expects well-behaved Hecke algebras and $R = \mathbb{T}$ theorems in the sense of Wiles.  When $\Runiv$ is a power series ring modulo relations, the deformation problem is \emph{obstructed}, and the structure of these relations reflects the local and global arithmetic of the underlying automorphic form.

\noindent For two-dimensional representations arising from weight $2$ newforms, Weston \cite{Wes04, Wes05} and Hatley \cite{Hat16} already developed criteria for obstruction in terms of congruence primes and local ramification. In the predecessor paper \cite{Bin26}, the author initiated a detailed local-global study using those results, the Greenberg-Wiles formula and Poitou-Tate duality: the obstruction group $H^2(G_{\Q,S}, \adz\rhobar)$ decomposes into local contributions $H^0(G_p, \epsl \otimes \adz\rhobar)$ at each prime $p$ in the ramification set, and a global contribution $\ShaOne(G_{\Q,S}, \epsl \otimes \adz\rhobar)$ controlled by strict congruences between newforms. The paper \cite{Bin26} carried out this analysis for primes $p \mid N$ with $p \nmid N^2$,  which corresponds to the Steinberg case,  where $E_f/\Q$ has multiplicative reduction at $p$.

\noindent The present paper takes up the orthogonal case: primes $p$ with $p^2 \mid N$, where $E_f/\Q$ has additive reduction. This case is both more varied and more complex because the additive reduction types are classified by different families of Weil-Deligne inertial types, and the local obstruction structure depends directly on the arithmetic of $p$ relative to $\ell$. Our main tool for organizing this classification is the recent work of Demb\'el\'e-Freitas-Voight \cite{DFV22}, which gives an explicit and complete dictionary between the reduction type of $E_f$ at $p$ and the inertial Weil-Deligne type, denoted by $\tau_{E_f}$. Using this dictionary,  we compute $H^0(G_p, \epsl \otimes \adz\rhobar)$ and the explicit presentation of $\Runiv$ for every type in the DFV classification and
every arithmetic case of $p$ modulo $\ell$.

\subsection{Main results}
\label{subsec:main-results}

Let $f$ be a weight $2$ newform of level $N$ satisfying hypotheses (H1)-(H4) of \S\ref{subsec:notation}, with $\ell \geq 5$ and $p^2 \mid N$ for at least one prime $p \mid N$.  For the geometric statements below and in particular for the $\ShaOne$ formula of Proposition\ref{prop:sha-formula}, which requires rational Fourier coefficients, we specialize to the case where $f$ corresponds to an elliptic curve $E_f/\Q$ of conductor $N$ under the modularity correspondence, we write $E_f$ for this curve and use the reduction type of $E_f$ at $p$ to name the inertial type. We work in three arithmetic cases according to the behavior of $p$ modulo $\ell$:
\begin{itemize}
  \item \textbf{Case A} (generic): $p \not\equiv \pm 1 \pmod{\ell}$
  \item \textbf{Case B} (semi-exceptional): $p \equiv -1 \pmod{\ell}$
  \item \textbf{Case C} (fully exceptional): $p \equiv 1 \pmod{\ell}$
\end{itemize}

\noindent Our main results are classified into three families according to the inertial type at $p$,  and each of them yield their own distinct types of obstruction relations.

\medskip
\noindent\textbf{Twisted Steinberg.}
When $E_f/\Q$ has additive, potentially multiplicative reduction at $p$, the inertial type is $\tau_{\mathrm{St},p} \otimes \epsp$ (cf.\cite[Prop. 4.1.1]{DFV22}).

\begin{theoremintro}[Theorem \ref{thm:twisted-steinberg-ring}]
Assume $\ShaOne = 0$. In Cases A, B, C the local obstruction dimension is $1$, $2$, $3$ respectively, and the universal deformation ring is:
\begin{align*}
\textnormal{Case A:} \quad &\Runiv \cong \Wk[[T_1,T_2,T_3,T_4]]/\langle T_4(T_1-T_3)\rangle, \\
\textnormal{Case B:} \quad &\Runiv \cong \Wk[[T_1,\ldots,T_5]]/ \langle T_4(T_1-T_3),T_5((1+T_1)(1+T_3)-1)\rangle, \\
\textnormal{Case C:} \quad &\Runiv \cong \Wk[[T_1,\ldots,T_6]]/ \langle T_4(T_1-T_3), T_5((1+T_1)(1+T_3)-1),T_6(T_1-T_3)\rangle
\end{align*}
\end{theoremintro}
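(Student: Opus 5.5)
The computation splits into three steps: the local obstruction group at $p$, the global dimension count, and the explicit relations. We carry them out in that order.

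\textbf{Step 1: the local group $H^0(G_p,\epsl\otimes\adz\rhobar)$.} When $E_f$ has potentially multiplicative reduction at $p$, Tate's uniformization over the ramified quadratic extension cut out by $\epsp$ gives
\[
\rhobar|_{G_p}\simeq \epsp\otimes\begin{pmatrix}\epsl & *\\ 0 & 1\end{pmatrix}.
\]
Since $\adz$ is insensitive to twisting by the character $\epsp$, we have $\adz\rhobar|_{G_p}\simeq\adz(\text{Steinberg})$. The key point is that in the twisted case the residual cocycle $*$ restricted to inertia is killed by the ramified twist. More precisely, since $p^2\mid N$ with $p\neq\ell$, the level-lowering hypotheses (H1)--(H4) should force $\ell\mid v_p(q)$-type peu-ramifi\'e behaviour. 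We therefore expect $\rhobar|_{G_p}$ to be semisimple, namely $\epsp\epsl\oplus\epsp$ restricted to $G_p$, which factors through a tame quotient. Then
\[
\epsl\otimes\adz\rhobar \simeq \epsl\oplus\epsl^2\oplus 1 .
\]
The $G_p$-invariants are counted by the characters that are trivial on $G_p$. The character $1$ always contributes, which gives dimension $1$ in Case A. When $p\equiv -1\pmod{\ell}$, the character $\epsl^2$ restricted to $G_p$ is trivial, adding $1$ (Case B). When $p\equiv 1\pmod{\ell}$, $\epsl$ is trivial on $G_p$, so all three pieces contribute and the dimension is $3$ (Case C).

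We must check carefully that the unipotent extension class does not survive mod $\ell$. If it did survive, the invariant count would change (for example, $1$ would drop to $0$ in Case A). This step is where we rely on the DFV dictionary together with \cite{Bin26}.

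\textbf{Step 2: global dimensions.} Assume $\ShaOne=0$. By Poitou--Tate duality and the Greenberg--Wiles formula (Proposition~\ref{prop:sha-formula} and the decomposition recalled in the introduction), $\dim H^2(G_{\Q,S},\adz\rhobar)$ is the sum of the local terms. By hypothesis, the only nonzero local term is the one at $p$. The Euler characteristic formula gives
\[
\dim H^1-\dim H^2 = 3 .
\]
Hence $\dim H^1 = 4,5,6$ and $\dim H^2 = 1,2,3$ in Cases A, B, C. Mazur's presentation then yields
\[
\Runiv\cong \Wk[[T_1,\dots,T_{h^1}]]/(r_1,\dots,r_{h^2}).
\]

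\textbf{Step 3: identifying the relations (the main obstacle).} Each relation is the image of the local obstruction under the map from local to global $H^2$. The plan is to compute the local versal ring at $p$ explicitly and pull it back.

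We choose a lift with $\sigma$ a Frobenius and $t$ a generator of tame inertia, subject to $\sigma t\sigma^{-1}=t^p$. We take
\[
\rho(\sigma)=\epsp(\sigma)\begin{pmatrix}1+T_1 & T_2\\ 0 & 1+T_3\end{pmatrix}\cdot(\text{normalization}),
\qquad
\rho(t)=\begin{pmatrix}1 & T_4\\ * & 1\end{pmatrix}.
\]
Imposing the tame relation to first order produces $T_4\bigl((1+T_1)-(1+T_3)\bigr)=0$, that is, $T_4(T_1-T_3)=0$. This is the relation coming from the trivial character.

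In Case B, the $\epsl^2$-component allows an extra lower-triangular inertial parameter $T_5$. Its compatibility condition involves the product of the diagonal Frobenius eigenvalues, giving
\[
T_5\bigl((1+T_1)(1+T_3)-1\bigr)=0 .
\]
In Case C, the $\epsl$-component adds a parameter $T_6$, again subject to the diagonal difference, giving $T_6(T_1-T_3)=0$.

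Finally, we must show that these local relations generate the full global ideal. This follows from the surjectivity of $H^2$ onto the local $H^0$-dual when $\ShaOne=0$, combined with a tangent-space dimension count. I expect the explicit second-order matching of the relations, particularly the form of the Case B relation, to be the delicate step. I would verify it by computing cup products $H^1\times H^1\to H^2$ locally.
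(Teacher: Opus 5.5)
Your route is the paper's route: decompose $\epsl\otimes\adz\rhobar|_{G_p}\cong\epsl\oplus\epsl^2\oplus 1$ with trivial inertia, apply Greenberg--Wiles with $\ShaOne=0$ and $d_1=3+d_2$, and read the relations off $FtF^{-1}=t^p$.

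\textbf{Gap in Step~1.} You flag this step, but you settle it by expectation rather than proof. Nothing in (H1)--(H4) forces $\rhobar|_{G_p}\cong\epsp\epsl\oplus\epsp$. Since $\epsp$ is ramified, $\rhobar^{I_p}=0$, so the conductor exponent of $\rhobar$ at $p$ is $2$ whether or not the Kummer class of the Tate parameter $q$ is ramified. There is therefore no level-lowering input forcing $\ell\mid v_p(q)$. The DFV dictionary cannot decide this either: it records only the characteristic-$0$ inertial type, which is $\tau_{\mathrm{St},p}\otimes\epsp$ in both situations.

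When the class is ramified, $\rhobar(\sigma)=-\begin{pmatrix}1&c_0\\0&1\end{pmatrix}$ with $c_0\neq0$. Its adjoint action sends $e_1\mapsto e_1-2c_0e_2$ and $e_3\mapsto e_3+c_0e_1-c_0^2e_2$. The $I_p$-invariants are then only $\kappa e_2$, on which $\Frob_p$ acts by $p^2$. So the local dimensions are $0,1,1$, not $1,2,3$. In Case C, a nonzero unramified class (possible because $\ell\mid p-1$) also cuts the dimension to $1$. The counts $1,2,3$ therefore require the extra hypothesis that $\rhobar|_{G_p}$ is split.

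The paper's proof has the same hole. It declares $c_0\in\kappa^\times$, yet asserts that conjugation by $\rhobar(\sigma)$ fixes every $e_i$. It also parametrizes lifts by $\rho(\sigma)=\begin{pmatrix}-1&T_4\\0&-1\end{pmatrix}$ with $T_4\in\mR$. Both steps silently take $c_0=0$. Both arguments also need $H^0(G_q,\epsl\otimes\adz\rhobar)=0$ at the other $q\in S$; you call this a hypothesis, but it is not in the statement.

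\textbf{Gap in Step~3.} The Case B and C relations are posited, and your own parametrization does not produce them.

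Case B: conjugating a lower-left inertial entry $T_5$ by $\rho(F)=\begin{pmatrix}p(1+T_1)&T_2\\0&1+T_3\end{pmatrix}$ multiplies it by the ratio $(1+T_3)/\bigl(p(1+T_1)\bigr)$, not a product. Meanwhile $\rho(t)^p$ contributes $pT_5$. So the lower-left equation has leading term $T_5\bigl((1+T_3)-p^2(1+T_1)\bigr)$, not $T_5\bigl((1+T_1)(1+T_3)-1\bigr)$. The same conjugation creates cross terms $T_2T_5$ on the diagonal and $T_2^2T_5$ in the upper-right entry. These perturb the other equations, including the one giving $T_4(T_1-T_3)$.

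Case C: the new obstruction direction $e_1$ is the diagonal part of the tame relation. Upper-triangular conjugation leaves diagonal entries unchanged, so a diagonal inertial parameter $\alpha$ satisfies $(1+\alpha)^{p-1}=1$ up to off-diagonal cross terms, i.e.\ $(p-1)\alpha=0$ to first order. That is an $\ell$-power torsion relation, like the one the paper derives in the supercuspidal case, with no factor $T_1-T_3$. The paper does not derive these two relations from the tame relation either; it obtains them from a cup-product heuristic.

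Finally, the claim that the local relations generate the global ideal needs a genuine comparison between the global ring and the local framed ring at $p$ (the B\"ockle-type result the paper cites). The isomorphism $H^2(G_{\Q,S},\adz\rhobar)\cong H^2(G_p,\adz\rhobar)$ together with a dimension count is not enough.
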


\noindent Here, as one might expect, the relations are of \emph{polynomial type}. In other words, each is a product $T_i \cdot f(T_j, T_k)$ whose two zero-loci define distinct irreducible components of $\mathrm{Spec}\Runiv[1/\ell]$.  Furthermore,  a comparison with \cite[Thm.4.7]{Bin26} (the untwisted Steinberg case $p \| N$) is made in Remark \ref{rem:compare-bin26} because in this paper's case the $\epsp$-twist makes $N^2 = 0$ exact, causing cancellations thus yielding relation $T_4(T_1 -T_3) = 0$ in place of the relation in \cite{Bin26}.

\medskip
\noindent\textbf{Principal series.}
When $E_f$ has additive, potentially good reduction at $p$ with inertial type $\tau_{\mathrm{ps},p}(1,1,e)$ ($e \mid (p-1)$), in the notation of \cite{DFV22},  the residual representation $\rhobar|_{G_p}$ is reducible.

\begin{theoremintro}[Theorem \ref{thm:ps-ring}]
Assume $\ShaOne = 0$.  In Cases A and B, the local obstruction dimension is $0$, giving $\Runiv \cong \Wk[[T_1,T_2,T_3]]$ (unobstructed locally).  In Case C ($p \equiv 1 \pmod\ell$), write $\mu = \bar{\chi}_1(\Frob_p)$.

\begin{enumerate}[label=\textnormal{(\roman*)}]
\item If $\mu^2 \equiv 1 \pmod\ell$ \textnormal{(Subcase C1)}: $d_2 = 1$ and $\Runiv \cong \Wk[[T_1,T_2,T_3,T_4]]/\langle T_4(T_1-T_3)\rangle$.
\item If $\mu^2 \not\equiv 1 \pmod\ell$ \textnormal{(Subcase C2)}: $d_2 = 1$ but the relation in $\Runiv$ is not determined by the local computation at $p$; it requires the global Poitou-Tate cup product.
\end{enumerate}
\end{theoremintro}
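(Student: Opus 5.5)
The argument combines the Greenberg--Wiles/Poitou--Tate decomposition from \cite{Bin26} with a direct computation of $H^0(G_p,\epsl\otimes\adz\rhobar)$ for the principal series type $\tau_{\mathrm{ps},p}(1,1,e)$. Under $\ShaOne=0$ the obstruction space $H^2(G_{\Q,S},\adz\rhobar)$ is dual to $\bigoplus_{v\in S}H^0(G_v,\epsl\otimes\adz\rhobar)$. So $d_2$ is the sum of local $H^0$'s, and the contribution from $p$ is isolated once the other places are treated as in \cite{Bin26}. The ring then has the shape $\Wk[[T_1,\dots,T_{d_1}]]/\langle g_1,\dots,g_{d_2}\rangle$, with $d_1-d_2=\dim\adz\rhobar^{-}=1$ by the Euler characteristic. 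This is the source of $\Wk[[T_1,T_2,T_3]]$ when $d_2=0$ and of four variables with one relation when $d_2=1$.

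For the local computation I would use \cite{DFV22}. In type $\tau_{\mathrm{ps},p}(1,1,e)$, $\rho|_{G_p}\simeq\chi_1\oplus\chi_2$ with $\chi_1\chi_2=\epsl$ and $\chi_2|_{I_p}=\chi_1^{-1}|_{I_p}$ of order $e$. Take $e\ge 3$, or more generally assume the type survives reduction mod $\ell$, so that $\bar\chi_1\neq\bar\chi_2$ on inertia. Then $\adz\rhobar|_{G_p}=\mathbf 1\oplus\bar\chi_1\bar\chi_2^{-1}\oplus\bar\chi_2\bar\chi_1^{-1}$. Twisting by $\epsl$ and using $\epsl=\bar\chi_1\bar\chi_2$ gives
\[
\epsl\otimes\adz\rhobar \simeq \epsl\oplus\bar\chi_1^{2}\oplus\bar\chi_2^{2}.
\]
The invariants of $\bar\chi_i^2$ vanish unless $\bar\chi_i^2$ is unramified, which would need $e\mid 2$. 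For $e\ge3$ only the $\epsl$ summand can contribute, and $H^0(G_p,\epsl)\neq0$ exactly when $p\equiv1\pmod\ell$. This gives $d_2=0$ in Cases A and B and $d_2=1$ in Case C. In Case B the $\epsl$-line contributes nothing, since it carries $-1$. I expect getting this reduction right to be the main obstacle, in particular handling small $e$ and residual collapse when $\ell\mid e$ is impossible. I would check each $e\mid p-1$ against the DFV table, and treat $e=2$ separately as a quadratic twist of an unramified situation, which lands in the same statements.

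For Case C I would write the relation explicitly. Choose lifts $1+T_1$ and $1+T_3$ for $\chi_1(\Frob_p)/\mu$ and $\chi_2(\Frob_p)/\mu^{-1}p$, a variable $T_4$ dual to the obstruction class, and $T_2$ a global variable. The obstruction class comes from the $\epsl$-line in $\adz$, namely the diagonal. It pairs via local Tate duality with $H^1(G_p,\mathbf 1)$ from the diagonal, and the cup product $H^1\times H^1\to H^2$ computes the quadratic part of the relation. In Subcase C1, $\mu^2\equiv1$ forces $\bar\chi_1(\Frob_p)=\pm\bar\chi_2(\Frob_p)$ up to the twist. The unramified deformations of the two characters then interact through the off-diagonal line, which becomes $\Frob_p$-invariant, and the relation factors as $T_4(T_1-T_3)$, exactly as in the twisted Steinberg computation of Theorem~\ref{thm:twisted-steinberg-ring}, Case A. In Subcase C2 the off-diagonal lines are not $\Frob_p$-invariant, so the local cup product on $H^1(G_p,\adz\rhobar)$ vanishes on the relevant pairing. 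The quadratic term of the relation then lies in the image of global classes under restriction, which yields only the statement that the relation is governed by the global Poitou--Tate cup product.
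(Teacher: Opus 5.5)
Your treatment of Cases A and B, and of $d_2=1$ in Case C, is the paper's argument. You decompose $\epsl\otimes\adz\rhobar|_{G_p}\cong\epsl\oplus\bar\chi_1^2\oplus\bar\chi_2^2$. For $e\in\{3,4,6\}$ inertia acts nontrivially on the two off-diagonal lines, so only the $\epsl$-line can be invariant, and Theorem~\ref{thm:gw} with $\ShaOne=0$ gives $d_2$.

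Two smaller points need repair. Your count ``$d_1-d_2=\dim(\adz\rhobar)^-=1$'' is wrong and contradicts your own conclusion, since it would give $\Wk[[T_1]]$ when $d_2=0$. Complex conjugation acts on $\adz\rhobar$ with eigenvalues $1,-1,-1$, so $h^1(\adz\rhobar)-h^2(\adz\rhobar)=2$; the determinant direction adds $1$, giving $d_1-d_2=3$ as in the paper. Your aside that $e=2$ ``lands in the same statements'' is also false. For $e=2$, $\bar\chi_1^2$ and $\bar\chi_2^2$ are unramified with Frobenius eigenvalues $\mu^2$ and $p^2/\mu^2$, which can equal $1$ in Cases A and B. The paper only treats $e\in\{3,4,6\}$.

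The genuine gap is Case C. You assert the relation $T_4(T_1-T_3)$ in C1 and a vanishing in C2, but the cohomological mechanism you offer cannot produce either.

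\begin{itemize}
\item The line $L=\bar\chi_1\bar\chi_2^{-1}$ and its dual $\epsl\otimes L^{-1}=\bar\chi_2^2$ are both ramified. Hence $H^0(G_p,L)=H^2(G_p,L)=0$, and $H^1(G_p,L)=0$ by the local Euler characteristic; the same holds for $L^{-1}$.
\item Therefore $H^1(G_p,\adz\rhobar)=H^1(G_p,\kappa)\,e_1$ is purely diagonal. The bracket cup product $H^1(G_p,\adz\rhobar)\times H^1(G_p,\adz\rhobar)\to H^2(G_p,\adz\rhobar)$ then vanishes identically, in C1 and C2 alike.
\item Frobenius fixing $L$ when $\mu^2\equiv1$ has no cohomological effect, because inertia still acts nontrivially on $L$. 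So your framework does not see the C1/C2 dichotomy at all.
\item Local Tate duality pairs $H^0(G_p,\epsl\otimes\adz\rhobar)$ with $H^2(G_p,\adz\rhobar)$, not with $H^1(G_p,\mathbf 1)$.
\item Your $T_4$, taken ``dual to'' the diagonal class, is not the paper's $T_4$, which is the off-diagonal entry of $\rho(\sigma)$. The paper stresses that these directions are distinct.
\end{itemize}

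What you are missing is the paper's explicit tame-inertia computation. Take $\rho(F)=\mathrm{diag}\bigl(\mu(1+T_1),(p/\mu)(1+T_3)\bigr)$ and $\rho(\sigma)$ upper triangular with diagonal $\zeta,\zeta^{-1}$ and upper-right entry $T_4$. Since $e\mid p-1$, we have $\zeta^p=\zeta$. So the upper-right entry of $\rho(\sigma)^p$ is $T_4(\zeta^p-\zeta^{-p})/(\zeta-\zeta^{-1})=T_4$. Conjugation by $\rho(F)$ multiplies that entry by $\tfrac{\mu^2}{p}\cdot\tfrac{1+T_1}{1+T_3}$. Imposing $\rho(F)\rho(\sigma)\rho(F)^{-1}=\rho(\sigma)^p$ gives $T_4\bigl(\tfrac{\mu^2}{p}\tfrac{1+T_1}{1+T_3}-1\bigr)=0$.

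With $p\equiv1$ and $\mu^2\equiv1$ this becomes $T_4(T_1-T_3)=0$, which is C1. If $\mu^2\not\equiv1$, the second factor has constant term $\mu^2-1\neq0$ and is a unit, forcing $T_4=0$ locally. The single relation guaranteed by $d_2=1$ must then come from the global Poitou--Tate pairing, which is C2.
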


\noindent The dichotomy between Subcases C1 and C2 is one of the central discoveries of this paper.  In Subcase C2 the local framed deformation ring forces the obstruction parameter to vanish locally, yet the Greenberg-Wiles formula still guarantees $d_2 \geq 1$ via $H^0(G_p, \epsl \otimes \adz\rhobar) \neq 0$: the local data is necessary but not sufficient to determine $\Runiv$. We explain this phenomenon in detail in Remark \ref{rem:mu2-dichotomy} and \S\ref{sec:synthesis}.

\medskip
\noindent\textbf{Non-exceptional supercuspidal.}
When $E_f$ has additive,  potentially good reduction with supercuspidal inertial type $\tau_{\mathrm{sc},p}$, the residual representation is induced: $\rhobar|_{G_p} \cong \mathrm{Ind}^{G_p}_{G_K}\bar\chi$ for a quadratic extension $K/\Q_p$.

\begin{theoremintro}[Theorems \ref{thm:sc-unram-ring} and \ref{thm:sc-ram-ring}]
Assume $\ShaOne = 0$. When $K/\Q_p$ is unramified, in Cases A and C, $\Runiv \cong \Wk[[T_1,T_2,T_3]]$ (locally unobstructed).  In Case B with unramified extension, ($p \equiv -1 \pmod\ell$): $$ \Runiv \cong \Wk[[T_1,T_2,T_3,T_4]]/\langle \ell^m T_4\rangle,  \qquad m = v_\ell(p+1) \geq 1$$ 
When $K/\Q_p$ is ramified (exclusively $p = 3$): $\Runiv \cong \Wk[[T_1,T_2,T_3]]$ in all Cases.
\end{theoremintro}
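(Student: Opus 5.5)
The plan follows the template used for the other families: first compute the local obstruction group $H^0(G_p,\epsl\otimes\adz\rhobar)$ for the induced representation $\rhobar|_{G_p}\cong\mathrm{Ind}_{G_K}^{G_p}\bar\chi$. Then feed its dimension into the Greenberg-Wiles/Poitou-Tate decomposition of $H^2(G_{\Q,S},\adz\rhobar)$. With $\ShaOne=0$, $d_2$ equals the sum of local contributions. Finally, present $\Runiv$ as $\Wk[[T_1,\ldots,T_{d_1}]]$ modulo $d_2$ relations, read off from the local framed deformation ring at $p$.

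\textbf{Step 1: decompose $\adz\rhobar|_{G_p}$.} For an induced representation with $\bar\chi\neq\bar\chi^\sigma$ (non-exceptional), Mackey theory gives
\[
\adz\rhobar|_{G_p}\cong \eta_{K}\oplus \mathrm{Ind}_{G_K}^{G_p}(\bar\chi/\bar\chi^\sigma),
\]
where $\eta_K$ is the quadratic character of $K/\Q_p$. Twisting by $\epsl$, a $G_p$-invariant exists only in two ways: $\epsl\eta_K$ is trivial, or $\epsl\cdot\bar\chi/\bar\chi^\sigma$ is trivial on $G_K$ and extends suitably.

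\textbf{Unramified $K$.} Here $\eta_K$ is the unramified quadratic character, sending $\Frob_p\mapsto -1$. Since $\epsl(\Frob_p)=p$, the character $\epsl\eta_K$ is trivial exactly when $p\equiv -1\pmod\ell$, i.e. Case B.

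For the induced summand, $\bar\chi/\bar\chi^\sigma$ has order prime to $\ell$ on inertia; it is nontrivial there and is killed only on the wild/tame quotient. So $\epsl\bar\chi/\bar\chi^\sigma$ is ramified, while $\epsl$ is unramified because $p\neq\ell$. Hence that summand contributes nothing.

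\textbf{Ramified $K$ (only $p=3$).} Now $\eta_K$ is ramified, so $\epsl\eta_K$ is never trivial. The same ramification argument kills the induced summand, giving a local obstruction dimension of $0$ in all Cases.

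\textbf{Step 2: the unobstructed cases.} When $H^0=0$, local duality gives $H^2(G_p,\adz\rhobar)=0$. Every prime in $S$ other than $p$ is handled as in \cite{Bin26} under (H1)-(H4), and $\ShaOne=0$, so $d_2=0$. Since $d_1-d_2=3$ by the Euler characteristic (Greenberg-Wiles with $\rhobar$ odd), we get $\Runiv\cong\Wk[[T_1,T_2,T_3]]$.

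\textbf{Step 3: Case B with unramified $K$.} Here $d_2=1$, and the Euler characteristic forces $d_1=4$. The task is to identify the single relation. I would work with the framed local deformation ring of $\mathrm{Ind}\bar\chi$. Any lift has the form $\mathrm{Ind}\chi$ with $\chi$ a lift of $\bar\chi$, plus the deformation in the $\eta_K$-direction. The obstruction class lives in the $\epsl\eta_K$-line of $H^2$, and by duality it pairs with the unramified class in $H^1(G_p,\eta_K)$.

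Lifting the Frobenius eigenvalue condition, the relevant cocycle computation reduces to the cyclic module $\Wk/(p+1)$. Concretely, the obstruction to extending the $\eta_K$-direction parameter $T_4$ is multiplication by $p+1$, which up to a unit is $\ell^{m}$ with $m=v_\ell(p+1)$. Since the other generators are unobstructed, globalizing gives the relation $\ell^m T_4=0$.

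The main obstacle is Step 3. The step that needs care is verifying that the relation is exactly $\ell^mT_4$, and not some $T_4\cdot g(T_i)$ or a higher-order term. My first attempt would be to compute the tangent/obstruction map on the unramified quotient explicitly. There the $\eta_K$-summand sees only $\langle\Frob_p\rangle$, and $H^1$, $H^2$ of $\hat{\mathbb Z}$ with coefficients $\Wk(\epsl\eta_K)$ are computed by $\Frob_p-1=-(p+1)$. This gives torsion of exact order $\ell^m$. I would then check that the global cup product does not introduce extra terms, using $\ShaOne=0$.
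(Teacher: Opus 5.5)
Your Steps 1 and 2 are essentially the paper's argument. The paper also splits $\adz\rhobar|_{G_p}\cong\varepsilon_K\oplus\mathrm{Ind}_{G_K}^{G_p}\bar\psi$, uses the Frobenius eigenvalue $-p$ on $\epsl\varepsilon_K$, kills the induced summand with $\bar\psi(\sigma)=\zeta^2\neq1$, uses that $\varepsilon_K$ is ramified when $p=3$, and then applies Greenberg--Wiles with $\ShaOne=0$. These steps are fine, with one exception. Saying that every other prime of $S$ is ``handled as in \cite{Bin26}'' proves nothing. The vanishing of $H^0(G_v,\epsl\otimes\adz\rhobar)$ at every other $v\in S$ can fail, for instance at a prime $q\,\|\,N$ with $q\equiv\pm1\pmod\ell$. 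It is an unstated extra hypothesis, in your write-up as in the paper.

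Your Step 3 is the cohomological version of the paper's computation of the tame relation modulo $\mR^2$. That computation gives $(p+1)\alpha=0$, and then $\ell^mT_4=0$ with $T_4=u\alpha$. Two details in your version are off:
\begin{itemize}
\item The nonzero class of $H^1(G_p,\varepsilon_K)$ is not unramified. The unramified part is $\kappa/(\varepsilon_K(\Frob_p)-1)\kappa=\kappa/(-2)\kappa=0$; the class is the tame-inertia direction $\alpha$.
\item $\hat{\mathbb Z}$ has no $H^2$. The obstruction group is $H^2(G_p,\cdot)=H^1(G_p/I_p,H^1(I_p,\cdot))$.
\end{itemize}

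The real gap is the step you single out, and your proposed check cannot close it, because the relation is \emph{not} exactly $\ell^mT_4$. Any first-order or integral-cohomology computation sees only the linear term. That covers your $\Wk/(p+1)$ and also the paper's reduction modulo $\mR^2$, which the paper justifies by the incorrect claim that the relations are linear.

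The exact tame relation is nonlinear. Take $\rho(F)$ antidiagonal and $\rho(\sigma)=\mathrm{diag}(\zeta(1+\alpha),\zeta^{-1}(1+\alpha)^{-1})$. Then $\rho(F)\rho(\sigma)\rho(F)^{-1}=\rho(\sigma)^p$ says $(1+\alpha)^{-1}=(1+\alpha)^p$, i.e.\ $(1+\alpha)^{p+1}=1$. Since $1+\mR$ is pro-$\ell$ and $u=(p+1)/\ell^m$ is prime to $\ell$, this is equivalent to
\[
(1+\alpha)^{\ell^m}-1=\ell^m\alpha+\binom{\ell^m}{2}\alpha^2+\cdots+\alpha^{\ell^m}=0,
\]
whose last term has unit coefficient.

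The two rings are genuinely different:
\begin{itemize}
\item $\Wk[[\alpha]]/\bigl((1+\alpha)^{\ell^m}-1\bigr)\cong\Wk[\mathbb{Z}/\ell^m\mathbb{Z}]$ is finite free over $\Wk$. Its characteristic-zero points are $\alpha=\xi-1$ with $\xi\in\mu_{\ell^m}$. These are the tame lifts $\rho(\sigma)=\mathrm{diag}(\zeta\xi,\zeta^{-1}\xi^{-1})$, which keep the conductor exponent and so are allowed by the minimality condition.
\item $\Wk[[T_4]]/(\ell^mT_4)$ has nonzero $\ell$-torsion, and its generic fibre is just $\{T_4=0\}$.
\end{itemize}
Since $\Wk$-flatness is intrinsic, no change of variables identifies them.

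So carrying out your check would refute the Case B presentation rather than confirm it. The local computation produces $(1+T_4)^{\ell^m}-1$, not $\ell^mT_4$. Neither your plan nor the paper's proof supplies a global mechanism that would kill the higher-order terms. The Cases A/C and ramified-$K$ conclusions survive, under the unstated hypothesis above. The Case B part, including the ``$\ell^m$-torsion'' interpretation, does not follow from this argument.
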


\noindent The relation $\ell^m T_4 = 0$ is of \emph{$\ell$-power torsion type},  so is significantly different from all relations in the previous two cases. The deformation ring $\Runiv$ has an $\ell^m$-torsion in this case. The obstruction direction $T_4$, which will be defined later in the corresponding section,  is visible only modulo $\ell^m$, not over $\Wk$ itself. The happens because of the quadratic character $\varepsilon_K$: deforming $\rhobar$ by $\varepsilon_K$ has exact order $\ell^m = v_\ell(p+1)$ in the deformation space.

In addition, for $p = 3$ we show:
\begin{itemize}
\item Case B never occurs for any $\ell \geq 5$ (Corollary \ref{cor:p3-caseb}): $3 \equiv -1 \pmod\ell$ requires $\ell \mid 4$, impossible for $\ell \geq 5$.
\item All three types $\tau_{\mathrm{sc},3}(-3,4,6)_j$, $j = 0,1,2$, give identical results with no $j$-dependent correction (Proposition \ref{prop:sc-j-independence}).
\item The Hatley \cite{Hat16} Case A2 ($\mathrm{ord}_\ell(p) = 4$, potentially relevant for general supercuspidals) is vacuous for all DFV elliptic curve types, because the inertia condition $\bar\psi|_{I_K} \neq 1$ holds unconditionally.
\end{itemize}

\subsection{Notation and hypotheses}
\label{subsec:notation}

Let $p$ and $\ell$ be distinct primes with $p \geq 3$ and $\ell \geq 5$. The case $\ell = 3$ is treated separately in remarks throughout; see Remarks \ref{rem:l3-gw}, \ref{rem:l3-twisted-steinberg}, \ref{rem:l3-ps-ramified}, \ref{rem:l3-ps}, and \ref{rem:l3-sc}.

\noindent Let $f$ be a normalized weight $2$ newform of level $N$, and let $\kappa$ be a finite field of characteristic $\ell$ with ring of Witt vectors $\Wk$. We fix an $\ell$-adic Galois representation $$  \rho_f : G_{\Q,S} \longrightarrow \GL_2(\Wk) $$ attached to $f$, where $S$ is a finite set of places of $\Q$ containing $\infty$ and all primes dividing $N\ell$. Reducing modulo the maximal ideal of $\Wk$ gives the residual representation $$ \rhobar : G_{\Q,S} \longrightarrow \GL_2(\kappa)$$

We impose the following conditions throughout:
\begin{itemize}
  \item[\textbf{(H1)}] $\rhobar$ is odd, continuous, and absolutely irreducible.
  \item[\textbf{(H2)}] $\ell \nmid N$ and $\ell \geq 5$, and $f$ is ordinary at $\ell$, i.e.\ $a_\ell(f) \not\equiv 0 \pmod{\ell}$.
  \item[\textbf{(H3)}] There exists at least one prime $p \mid N$
        with $p \geq 3$ and $p^2 \mid N$.
  \item[\textbf{(H4)}] The deformation problem for $\rhobar$ is minimally
        ramified at all primes dividing $N$, in the sense made precise
        in each section according to the local inertial type.
\end{itemize}

\noindent Hypotehis (H1) is very standard in Galois deformation literature for residual representations. Hypothesis (H2) ensures that the residual representation at $\ell$ is well-behaved: ordinarity makes $\rhobar|_{G_\ell}$ reducible with distinct diagonal characters, so that $H^0(G_\ell, \epsl \otimes \adz\rhobar) = 0$ and the local term at $\ell$ drops out of the Greenberg-Wiles formula, which then holds with equality, see \S\ref{subsec:gw}.  Hypothesis (H3) is what we study the local Galois representation at primes $p$ with $p^2 \mid N$,
which forces additive reduction. When multiple primes $p_1, \ldots, p_r$ satisfy $p_i^2 \mid N$, we analyze the local contribution at each $p_i$ separately; the global obstruction group is then controlled by the sum of these local contributions via the Greenberg-Wiles formula
(Theorem \ref{thm:gw}); see Corollary \ref{cor:multiple-primes}.  Hypothesis (H4) is another standard Galois deformation condition.  Its precise content depends on the inertial type at each prime and is spelled out in Definitions \ref{def:min-ram-twisted-steinberg},
\ref{def:min-ram-ps}, and \ref{def:min-ram-sc} respectively.

\noindent We write $\epsl : G_{\Q,S} \to \F_\ell^\times$ for the mod $\ell$ cyclotomic character. For the local Galois group at a prime $p \mid N$, we write $G_p \subseteq G_{\Q,S}$ for a decomposition group, $I_p \subseteq G_p$ for the inertia subgroup, and $\Frob_p \in G_p/I_p$ for arithmetic Frobenius.  We write $\epsp$ for the ramified quadratic character of $G_p$ associated to the totally ramified extension $\Q_p(\sqrt{p})/\Q_p$.  We use the adjoint representations $$\ad\rhobar := \mathrm{End}_{\F_\ell}(\F_\ell^2), \qquad  \adz\rhobar := \{X \in \ad\rhobar : \mathrm{tr}(X) = 0\}$$
both with $G_{\Q,S}$-action by conjugation via $\rhobar$.  The inertial Weil-Deligne type of $\rhobar|_{G_p}$ at each prime $p$ with $p^2 \mid N$ is recalled in \S\ref{subsec:inertial-types} following the notation and results of \cite{DFV22}.

For each prime $p$ with $p^2 \mid N$, we distinguish three cases according to the behavior of $p$ modulo $\ell$: \begin{itemize}
  \item \textbf{Case A} (generic):
        $p \not\equiv \pm 1 \pmod{\ell}$, equivalently $p^2 \not\equiv 1 \pmod{\ell}$.
  \item \textbf{Case B} (semi-exceptional):
        $p \equiv -1 \pmod{\ell}$, so $p^2 \equiv 1$ but $p \not\equiv 1 \pmod{\ell}$.
  \item \textbf{Case C} (fully exceptional):
  		$p \equiv 1 \pmod{\ell}$.
\end{itemize}
Also, let us note that these cases are mutually exclusive and exhaustive,  since $p \not\equiv 0 \pmod{\ell}$ is guaranteed by (H2).  In Sections \ref{sec:twisted-steinberg},\ref{sec:supercuspidal} we treat all three cases explicitly, both for the computation of $H^0(G_p, \epsl \otimes \adz\rhobar)$ and for the resulting deformation ring presentation.

\section{Background and Setup}
\label{sec:background}

\subsection{The Greenberg-Wiles formula}
\label{subsec:gw}

We begin by recalling the version of the Greenberg-Wiles formula that will serve as our main computational tool throughout the paper.  Let $\rhobar : G_{\Q,S} \to \GL_2(\kappa)$ be as in \S\ref{subsec:notation}, satisfying(H1)-(H4).  We define the Selmer-type Tate-Shafarevich group as 
$$
  \ShaOne(G_{\Q,S}, \epsl \otimes \adz\rhobar) := \ker \left( H^1(G_{\Q,S}, \epsl \otimes \adz\rhobar) \longrightarrow  \prod_{p \in S} H^1(G_p, \epsl \otimes \adz\rhobar) \right)
$$

\begin{theorem}[Greenberg-Wiles, cf.\ {\cite[Lemma 2.5]{Wes04}}]
\label{thm:gw}
Assume \textnormal{(H1)-(H4)}. Then
$$ \dim_\kappa H^2(G_{\Q,S}, \adz\rhobar) = \dim_\kappa \ShaOne(G_{\Q,S}, \epsl \otimes \adz\rhobar) + \sum_{p \in S} \dim_\kappa H^0(G_p, \epsl \otimes \adz\rhobar)
$$
\end{theorem}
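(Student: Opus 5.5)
The formula comes out of the Poitou--Tate nine-term exact sequence for the finite $G_{\Q,S}$-module $M=\adz\rhobar$ and its Cartier dual $M^*=\operatorname{Hom}(M,\mu_\ell)$. Since $\ell\ge 5$, the trace pairing on $\adz\rhobar$ is perfect and $G_{\Q,S}$-equivariant. So $M^\vee\cong M$, and therefore $M^*\cong \epsl\otimes\adz\rhobar$.

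The tail of the sequence reads
\[
 H^1(G_{\Q,S},M)\to \textstyle\prod_{p\in S}H^1(G_p,M)\to H^1(G_{\Q,S},M^*)^\vee\to H^2(G_{\Q,S},M)\to \textstyle\bigoplus_{p\in S}H^2(G_p,M)\to H^0(G_{\Q,S},M^*)^\vee\to 0 .
\]
The kernel of $H^2(G_{\Q,S},M)\to\bigoplus_p H^2(G_p,M)$ is $\ShaOne(G_{\Q,S},M)$. Poitou--Tate duality identifies this group with $\ShaOne(G_{\Q,S},M^*)^\vee$, which is exactly the $\ShaOne$ term in the statement. Hence
\[
\dim H^2(G_{\Q,S},M)=\dim\ShaOne(G_{\Q,S},M^*)+\dim\operatorname{im}\Big(H^2(G_{\Q,S},M)\to\textstyle\bigoplus_p H^2(G_p,M)\Big).
\]
The image is the kernel of the surjection onto $H^0(G_{\Q,S},M^*)^\vee$. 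Its dimension is therefore $\sum_{p\in S}\dim H^2(G_p,M)-\dim H^0(G_{\Q,S},M^*)$.

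Two identifications then finish the argument.

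\textbf{Global invariants.} First, $H^0(G_{\Q,S},\epsl\otimes\adz\rhobar)=0$. A nonzero invariant would give a nonzero $G_{\Q,S}$-map $\rhobar\to\rhobar\otimes\epsl$. By (H1) and Schur's lemma this would be an isomorphism $\rhobar\cong\rhobar\otimes\epsl$ whose associated traceless endomorphism is nonzero, and that forces $\rhobar$ to be induced with $\epsl$ quadratic. This is impossible: $\epsl$ has order $\ell-1\ge4$. I would also use the absolute irreducibility of $\rhobar$ over $\kappa$ to rule out the Schur-type invariant directly.

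\textbf{Local terms.} At each finite $p\in S$, local Tate duality gives $H^2(G_p,M)\cong H^0(G_p,M^*)^\vee$, which is the summand appearing in the theorem. At $p=\infty$, $H^2$ of a group of order $2$ with coefficients of odd order vanishes. This term must be handled consistently with the convention that the sum runs over $p\in S$ (for $\ell$ odd, $H^0(G_\infty,\cdot)$ is nonzero, so I must check that Tate cohomology is being used there, or that the sum is over finite places).

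\textbf{Main obstacle.} I expect the bookkeeping at the infinite place and at $p=\ell$ to be the main obstacle; the rest is formal from Poitou--Tate. Ordinarity (H2) kills $H^0(G_\ell,\epsl\otimes\adz\rhobar)$, since the diagonal characters are distinct and not cyclotomic twists of each other. That term therefore contributes zero, as the paper remarks. At infinity, cohomology of $\Z/2$ with $\ell$-primary coefficients in degree $\ge1$ vanishes, so the formula is really a sum over finite places. Rather than reprove these points in full, I would follow \cite[Lemma 2.5]{Wes04}, checking only that (H1)--(H4) imply its hypotheses.
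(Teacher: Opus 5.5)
Your proposal is correct and follows essentially the paper's route: the tail of the Poitou--Tate nine-term sequence as in \cite[Lemma 2.5]{Wes04}, the identification $\Sh^2(G_{\Q,S},\adz\rhobar)\cong\ShaOne(G_{\Q,S},\epsl\otimes\adz\rhobar)^\vee$, local Tate duality at the finite places, and the vanishing of $H^0(G_{\Q,S},\epsl\otimes\adz\rhobar)$; that is the global term that actually enters, not the $H^0(G_{\Q,S},\adz\rhobar)$ the paper cites. Your caution at $\infty$ is justified: for odd $\rhobar$ the ordinary invariants $H^0(G_\infty,\epsl\otimes\adz\rhobar)$ are $2$-dimensional, so the paper's claim that they vanish ``since $\rhobar$ is odd'' holds only for Tate cohomology, and the sum must be read over finite places; at $p=\ell$, by contrast, you need no vanishing, since the formula keeps that term, and (H2) alone would not give it when $a_\ell\equiv\pm1\pmod{\ell}$ and $\rhobar|_{G_\ell}$ is split.
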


\begin{proof}
This is the dual form of Wiles's formula for the Selmer and dual-Selmer groups attached to $\adz\rhobar$; see \cite[Lemma 2.5]{Wes04}, which is due to the Poitou-Tate nine-term exact sequence \cite[Thm 4.10]{Mil86}. The identity is an exact equality of dimensions, valid for any finite Galois module; it is not merely an inequality.  Under(H1)-(H2) the auxiliary terms vanish: $H^0(G_{\Q,S}, \adz\rhobar) = 0$ by absolute irreducibility, and the local terms at $p = \ell$ and $p = \infty$ are trivial, so the sum over $p \in S$ reduces to a sum over the finite primes $p \mid N$. Indeed $H^0(G_\infty, \epsl \otimes \adz\rhobar) = 0$ since $\rhobar$
is odd, and $H^0(G_\ell, \epsl \otimes \adz\rhobar) = 0$ by the ordinarity hypothesis in (H2): $\rhobar|_{G_\ell}$ is reducible with distinct diagonal characters, so no summand of $\epsl \otimes \adz\rhobar$ is $G_\ell$-invariant.
\end{proof}

\begin{remark}[$\ell = 3$]
\label{rem:l3-gw}
Although (H2) requires $\ell \geq 5$, we record what happens if one relaxes this to $\ell = 3$, since the local computations are unaffected. For $\ell = 3$, Theorem \ref{thm:gw} holds only as an inequality: $$\dim_\kappa H^2(G_{\Q,S}, \adz\rhobar) \leq \dim_\kappa \ShaOne(G_{\Q,S}, \epsl \otimes \adz\rhobar) + \sum_{p \in S} \dim_\kappa H^0(G_p, \epsl \otimes \adz\rhobar)$$
because the $H^0$ terms controlling the Euler characteristic not necessarily vanish. The local computations of $H^0(G_p, \epsl \otimes \adz\rhobar)$ carried out in Sections \ref{sec:twisted-steinberg}-\ref{sec:supercuspidal} remain valid for $\ell = 3$. However, the formula then gives only an upper bound on $\dim_\kappa H^2$, so all deformation ring presentations derived from Theorem \ref{thm:gw} in the $\ell = 3$ case are actually presentations of quotients of the stated rings, and the one-relation structure may not be exact. We describe each instance with a remark in the relevant section.
\end{remark}

\noindent The formula in Theorem \ref{thm:gw} separates the obstruction space into two contributions: a \emph{global term} $\ShaOne$, controlled by strict congruences between newforms as in \cite[\S4]{Bin26}, and a sum of \emph{local terms} $H^0(G_p, \epsl \otimes \adz\rhobar)$, one for each prime in $S$. The central task of Sections\ref{sec:twisted-steinberg} \ref{sec:supercuspidal} is to compute these local terms explicitly for each inertial type arising at primes $p$ with $p^2 \mid N$.

\begin{corollary}
\label{cor:obstruction-criterion}
Assume \textnormal{(H1)-(H4)}. The deformation problem for $\rhobar$ is obstructed if and only if $$\ShaOne(G_{\Q,S}, \epsl \otimes \adz\rhobar) \neq 0 \quad\text{or}\quad H^0(G_p, \epsl \otimes \adz\rhobar) \neq 0 \text{ for some } p \in S$$
In particular, if $\ShaOne = 0$ and $H^0(G_p, \epsl \otimes \adz\rhobar) = 0$ for all $p \in S \setminus \{p_0\}$ where $p_0$ is the prime of hypothesis \textnormal{(H3)}, then
$$\dim_\kappa H^2(G_{\Q,S}, \adz\rhobar) = \dim_\kappa H^0(G_{p_0}, \epsl \otimes \adz\rhobar)$$
\end{corollary}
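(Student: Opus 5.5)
The corollary should follow directly from Theorem \ref{thm:gw} once we fix what "obstructed" means. I will use the standard definition: the deformation problem for $\rhobar$ is obstructed exactly when $H^2(G_{\Q,S}, \adz\rhobar) \neq 0$. The reason is that $\Runiv$ has a presentation $\Wk[[T_1,\ldots,T_{d_1}]]/J$ in which the minimal number of generators of $J$ is at most $d_2 = \dim_\kappa H^2(G_{\Q,S},\adz\rhobar)$, and $\Runiv$ is formally smooth of dimension $d_1$ when $d_2 = 0$. (H1) ensures that $\Runiv$ exists, since $\rhobar$ has only scalar endomorphisms. So the first step is just to record that "unobstructed" means $d_2 = 0$.

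For the first claim, apply Theorem \ref{thm:gw}. It writes $d_2$ as a sum of dimensions of $\kappa$-vector spaces, namely $\dim_\kappa \ShaOne(G_{\Q,S}, \epsl \otimes \adz\rhobar)$ together with the terms $\dim_\kappa H^0(G_p, \epsl \otimes \adz\rhobar)$ for $p \in S$. Every summand is a nonnegative integer. A finite sum of nonnegative integers is nonzero if and only if some summand is nonzero. This gives the equivalence: $H^2 \neq 0$ if and only if $\ShaOne \neq 0$ or some local $H^0$ is nonzero. The argument only works because Theorem \ref{thm:gw} is an exact equality, which holds under (H1)--(H2) with $\ell \ge 5$. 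With only the inequality of Remark \ref{rem:l3-gw}, the forward direction would survive but the converse would not.

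For the second claim, start again from the formula of Theorem \ref{thm:gw}. Substitute $\ShaOne = 0$ and $H^0(G_p,\epsl\otimes\adz\rhobar)=0$ for every $p \in S\setminus\{p_0\}$. The only surviving term is the one at $p_0$, which is the stated equality. The proof of Theorem \ref{thm:gw} shows that the terms at $\ell$ and $\infty$ vanish automatically, so the hypothesis only has real content at the finite primes $p \mid N$ other than $p_0$.

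The main point needing care is the first step: making sure the paper's notion of obstructedness matches $H^2(G_{\Q,S},\adz\rhobar)\neq 0$, rather than "$\Runiv$ is not a power series ring". These are not a priori the same, because a nonzero $H^2$ need not force a nontrivial relation. If the intended meaning is the ring-theoretic one, I would state the corollary at the level of $H^2$ and note that the ring-theoretic version is only established later, type by type, in Sections \ref{sec:twisted-steinberg}--\ref{sec:supercuspidal}.
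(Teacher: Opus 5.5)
Your argument is the paper's own. The paper gives no separate proof of this corollary because it is read off from the exact equality in Theorem \ref{thm:gw}: a finite sum of nonnegative integers vanishes if and only if every summand does, and the second claim is the same formula after substituting $\ShaOne=0$ and the vanishing of the other local terms. Your caveat that ``obstructed'' has to mean $H^2(G_{\Q,S},\adz\rhobar)\neq 0$, rather than $\Runiv$ failing to be a power series ring, is the reading the paper actually uses (Definition \ref{def:local-global-obstruction}, Theorem \ref{thm:ps-ring}(iii)), so that remark is well placed.
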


\subsection{The obstruction group and the local-global decomposition}
\label{subsec:local-global}

We now recall how the obstruction space interacts with the Poitou-Tate exact sequence, following the framework developed in \cite[\S3]{Bin26}. Throughout, we work with the \emph{full} deformation functor of $\rhobar$, so the determinant is allowed to vary.  As a very well known fact for the deformation functor, the tangent and obstruction spaces are controlled by the adjoint representation $\ad\rhobar$. 

\noindent For $\ell \geq 5$ and $\rhobar$ odd and absolutely irreducible, the adjoint decomposes $G_{\Q,S}$ as $$ \ad\rhobar\cong  \adz\rhobar\oplus \kappa$$ with $\kappa$ the trivial summand of scalar matrices. The scalar summand determines deformations of the determinant: these form an unobstructed problem, since $\det\rho$ ranges over lifts of the
character $\det\rhobar = \epsl$ to $R^\times$, controlled by $H^1(G_{\Q,S}, \kappa)$ with no obstruction as $H^2(G_{\Q,S}, \kappa) = 0$ under (H1) and (H2). Consequently the universal deformation ring factors as $$\Runiv \cong R^{\mathrm{univ}}_{\adz} \widehat{\otimes}_{\Wk} \Wk[[T_{\det}]]$$
where $R^{\mathrm{univ}}_{\adz}$ is the universal ring for the fixed-determinant (trace-zero) problem and $T_{\det}$ is the single free variable parametrizing the determinant deformation.  By \cite{Maz87}, $\Runiv$ is a quotient of $\Wk[[T_1, \ldots, T_{d_1}]]$ with $d_1 = \dim_\kappa H^1(G_{\Q,S}, \ad\rhobar)$, and the number of relations is at most $d_2 = \dim_\kappa H^2(G_{\Q,S}, \ad\rhobar)$.  Since $H^i(\ad\rhobar) = H^i(\adz\rhobar) \oplus H^i(\kappa)$, the obstruction is carried entirely by the trace-zero part,  $$d_2 = \dim_\kappa H^2(G_{\Q,S}, \ad\rhobar) = \dim_\kappa H^2(G_{\Q,S}, \adz\rhobar)$$
while the variable count splits as
$$d_1
    = \underbrace{\dim_\kappa H^1(G_{\Q,S}, \adz\rhobar)}_{=2 + d_2} + \underbrace{\dim_\kappa H^1(G_{\Q,S}, \kappa)}_{=1} = 3 + d_2$$
    
The trace-zero Euler characteristic gives $\dim_\kappa H^1(\adz\rhobar) -\dim_\kappa H^2(\adz\rhobar) = 2$ (using $H^0(\adz\rhobar) = 0$ and the archimedean term for an odd
representation, see \cite{Maz87}), and the determinant direction contributes the remaining $+1$. Hence $d_1 -d_2 = 3$ and $$ \Runiv \cong \Wk[[T_1, T_2, T_3, T_4, \ldots, T_{3+d_2}]] / (r_1, \ldots, r_{d_2})$$

In the unobstructed case $d_2 = 0$ this gives $\Runiv \cong \Wk[[T_1, T_2, T_3]]$, in which $T_3$ may be taken to be the determinant variable $T_{\det}$ and $T_1, T_2$ the trace-zero
directions. When $d_2 = 1$, which occurs throughout Sections \ref{sec:twisted-steinberg}-\ref{sec:supercuspidal} in Case A, $\Runiv$ is a power series ring in four variables modulo a
single explicit relation whose form depends on the inertial type.  That relation involves only the trace-zero parameters, the determinant variable remaining free.

\noindent The Poitou-Tate exact sequence (\cite[Thm. 4.10]{Mil86} and \cite[\S3, eq.(1)]{Bin26}) gives
\begin{equation}
\label{eq:pt}
  0 \longrightarrow \prod_{p \in S} H^0(G_p, \epsl \otimes \adz\rhobar) \longrightarrow \mathrm{Hom}_\kappa(H^2(G_{\Q,S}, \adz\rhobar), \kappa) \longrightarrow \ShaOne(G_{\Q,S}, \epsl \otimes \adz\rhobar) \longrightarrow 0
\end{equation}

\begin{definition}
\label{def:local-global-obstruction}
We say the deformation problem for $\rhobar$ has a \emph{local obstruction at $p$} if $H^0(G_p, \epsl \otimes \adz\rhobar) \neq 0$, and a \emph{global obstruction} if $\ShaOne(G_{\Q,S}, \epsl \otimes \adz\rhobar) \neq 0$.  The deformation problem for $\rhobar$ is obstructed if and only if at least one of these holds,. \cite{Wes04} and \cite[\S4]{Bin26} gives a general treatise for the global obstruction mechanism via strict congruence primes.
\end{definition}

\begin{proposition}[Local-global separation]
\label{prop:local-global}
Assume \textnormal{(H1)-(H4)}.
If $H^0(G_p, \epsl \otimes \adz\rhobar) = 0$ for every $p \in S$, then the Poitou-Tate sequence gives a natural identification $$\mathrm{Hom}_\kappa(H^2(G_{\Q,S}, \adz\rhobar), \kappa)  \cong \ShaOne(G_{\Q,S}, \epsl \otimes \adz\rhobar)$$
and the deformation problem is obstructed if and only if $\ShaOne \neq 0$.
Conversely, if $\ShaOne = 0$ and a local obstruction exists
at some $p \in S$, then
$$
  \dim_\kappa H^2(G_{\Q,S}, \adz\rhobar)
  = \sum_{p \in S} \dim_\kappa H^0(G_p, \epsl \otimes \adz\rhobar).
$$
\end{proposition}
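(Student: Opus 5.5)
The plan is to read both assertions directly off the short exact sequence \eqref{eq:pt}, supplemented by Theorem \ref{thm:gw} for the dimension count. The sequence has the form $0 \to A \to B \to C \to 0$ with $A = \prod_{p \in S} H^0(G_p, \epsl \otimes \adz\rhobar)$, $B = \mathrm{Hom}_\kappa(H^2(G_{\Q,S}, \adz\rhobar),\kappa)$ and $C = \ShaOne(G_{\Q,S}, \epsl \otimes \adz\rhobar)$. All three are finite-dimensional $\kappa$-vector spaces, since $G_{\Q,S}$ and each $G_p$ have finite cohomology with coefficients in a finite module. The first step is to record that the maps in \eqref{eq:pt} are $\kappa$-linear. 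They are induced by restriction and by the local and global duality pairings, which are functorial in the coefficient module $\kappa$. The statement can therefore be treated as pure linear algebra once the sequence is granted.

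For the first assertion, assume $H^0(G_p, \epsl \otimes \adz\rhobar) = 0$ for every $p \in S$. Then $A = 0$, and exactness at $B$ makes the map $B \to C$ injective. It is surjective by exactness at $C$, so it is an isomorphism. Naturality is inherited from the naturality of the Poitou--Tate maps. Dualizing, $H^2(G_{\Q,S}, \adz\rhobar) \neq 0$ holds if and only if $\ShaOne \neq 0$. By the discussion in \S\ref{subsec:local-global}, $d_2 = \dim_\kappa H^2(G_{\Q,S}, \adz\rhobar)$ is the number of relations and $d_1 - d_2 = 3$. Hence the problem is obstructed, meaning $\Runiv$ is not formally smooth, if and only if $d_2 \neq 0$. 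This is consistent with Corollary \ref{cor:obstruction-criterion}.

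For the converse, assume $\ShaOne = 0$. Then $C = 0$ and $A \to B$ is an isomorphism, so $\dim_\kappa H^2(G_{\Q,S}, \adz\rhobar) = \sum_{p\in S} \dim_\kappa H^0(G_p,\epsl\otimes\adz\rhobar)$. This can be cross-checked against Theorem \ref{thm:gw} with the $\ShaOne$ term set to zero. The hypothesis that a local obstruction exists is not needed for the equality; it only guarantees that both sides are positive. As in the proof of Theorem \ref{thm:gw}, the terms at $\ell$ and $\infty$ vanish under (H1)--(H2), so the sum effectively runs over $p \mid N$.

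The main obstacle is justifying \eqref{eq:pt} itself as a \emph{short} exact sequence, with injectivity on the left and surjectivity on the right. The surrounding nine-term Poitou--Tate sequence of \cite[Thm.~4.10]{Mil86} only yields the segment
\[
H^1(G_{\Q,S},\adz\rhobar)\to \textstyle\prod_p H^1(G_p,\adz\rhobar)\to H^1(G_{\Q,S},\epsl\otimes\adz\rhobar)^\vee \to \cdots
\]
together with the dual statement $\ShaOne(\epsl\otimes\adz\rhobar)\cong \Sh^2(\adz\rhobar)^\vee$. Getting the precise form \eqref{eq:pt} requires two things. First, the local terms at $\ell$ and $\infty$ must vanish, and these are exactly the contributions discussed in the proof of Theorem \ref{thm:gw}. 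Second, left exactness must hold, which comes from $H^0(G_{\Q,S}, \epsl\otimes\adz\rhobar)^\vee$ vanishing by absolute irreducibility. Rather than re-derive this, I would cite \cite[\S3, eq.~(1)]{Bin26} as the paper does, noting that both directions of the proposition use nothing beyond exactness of \eqref{eq:pt}.
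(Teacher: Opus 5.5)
Your proposal is correct and follows the paper's argument, which likewise reads both assertions off the exactness of \eqref{eq:pt}, with Theorem~\ref{thm:gw} as a cross-check; you are also right that the hypothesis of a local obstruction is not needed for the dimension equality. One caution: read ``obstructed'' as $H^2(G_{\Q,S},\adz\rhobar)\neq 0$, which is how the paper uses it (cf.\ Corollary~\ref{cor:obstruction-criterion}), and not as ``$\Runiv$ is not formally smooth'': Mazur's presentation only bounds the number of relations by $d_2$, so a nonzero $H^2$ does not by itself force a nontrivial relation.
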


\begin{proof}
Both statements follow immediately from the exactness of \eqref{eq:pt}
and Theorem \ref{thm:gw}.
\end{proof}

\begin{remark}
\label{rem:one-relation}
In Sections \ref{sec:twisted-steinberg} and \ref{sec:supercuspidal}, we will see that for all inertial types arising at primes $p$ with $p^2 \mid N$, the local term $H^0(G_p, \epsl \otimes \adz\rhobar)$ is either zero or one-dimensional in Case A. The case of dimension one gives $d_2 = 1$ via Theorem \ref{thm:gw} when $\ShaOne = 0$, yielding one-relation deformation ring presentations following the method of \cite{Bos92, Boe99}. The prototype computation in the Steinberg case $p \| N$ is carried out in \cite[Thm. 4.7]{Bin26}, and in the present paper we extend this to all inertial types arising when $p^2 \mid N$.
\end{remark}

\subsection{Inertial types at primes $p$ with $p^2 \mid N$}
\label{subsec:inertial-types}

We now recall the classification of inertial Weil-Deligne types at $p$ relevant to our setting,
following Demb\'el\'e-Freitas-Voight \cite{DFV22}. Since $p^2 \mid N$, the elliptic curve $E_f/\Q$ has additive reduction at $p$. By \cite[Lem.3.1.4]{DFV22}, the conductor exponent $v_p(N_E)$ satisfies $v_p(N_E) \geq 2$. We divide into two classes: potentially multiplicative and potentially good.

\medskip
\noindent\textbf{Potentially multiplicative reduction.}
If $E_f$ has additive but potentially multiplicative reduction at $p$, then by \cite[Prop.4.1.1(b)(i)]{DFV22}, the inertial type is $$ \tau_{E_f} \simeq \tau_{\mathrm{St},p} \otimes \epsp $$
with $v_p(N_E) = 2$, where $\tau_{\mathrm{St},p}$ is the special (Steinberg) type and $\epsp$ is the ramified quadratic character of $G_p$ associated to $\Q_p(\sqrt{p})/\Q_p$. This is the \emph{twisted Steinberg} case, treated in Section \ref{sec:twisted-steinberg}.

\medskip
\noindent\textbf{Potentially good reduction.}
If $E_f$ has additive, potentially good reduction at $p$, then $v_p(N_E) \geq 2$ and the inertial type $\tau_{E_f}$ is determined by $p$ and the semistability defect $e_{E_f} := [L : \Q_p^{\mathrm{nr}}]$, where $L$ is the minimal extension over which $E_f$ achieves good reduction. The possibilities are as follows.

For $p \geq 5$ (cf. \cite[Prop. 4.2.1]{DFV22}): the semistability defect $e \in \{3,4,6\}$, and
\begin{itemize}
  \item if $e \mid (p-1)$: $\tau_{E_f} \simeq \tau_{\mathrm{ps},p}(1,1,e)$, principal series, $v_p(N_E) = 2$,
  \item if $e \mid (p+1)$: $\tau_{E_f} \simeq \tau_{\mathrm{sc},p}(u,1,e)$, non-exceptional supercuspidal, $v_p(N_E) = 2$.
\end{itemize}

For $p = 3$ (cf.\cite[Prop.5.2.2]{DFV22}): the classification is more involved, with $v_3(N_E) \in \{2,3,4,5\}$:
\begin{itemize}
  \item $v_3(N_E) = 2$: $\tau_{E_f} \simeq \tau_{\mathrm{sc},3}(-1,1,4)$, supercuspidal, $e = 4$,
  \item $v_3(N_E) = 3$: $\tau_{E_f} \simeq \tau_{\mathrm{sc},3}(\pm 3, 2, 6)$, supercuspidal, $e = 12$,
  \item $v_3(N_E) = 4$: $\tau_{E_f} \simeq \tau_{\mathrm{ps},3}(1,2,3)$ or $\tau_{\mathrm{sc},3}(-1,2,3)$, or their $\varepsilon_3$-twists, $e \in \{3,6\}$,
  \item $v_3(N_E) = 5$: $\tau_{E_f} \simeq \tau_{\mathrm{sc},3}(-3,4,6)_j$ for $j = 0,1,2$, supercuspidal, $e = 12$.
\end{itemize}

\noindent The notation $\tau_{\mathrm{ps},p}(d,f,r)$ and $\tau_{\mathrm{sc},p}(d,f,r)$
follows \cite[{\S}2.5]{DFV22}: $d$ is the discriminant of the relevant quadratic extension $K/\Q_p$, $f$ is the conductor exponent of the inducing character, and $r$ is its order on the inertia subgroup $I_K$. \footnote{With this convention $f = \mathrm{condexp}(\chi)$, so for the $p \geq 5$ unramified supercuspidal type we write $\tau_{\mathrm{sc},p}(u,1,e)$, consistent with the $p=3$ unramified labels $\tau_{\mathrm{sc},3}(-1,1,4)$ of \cite[Table 3]{DFV22}: in each case $\mathrm{condexp}(\chi)=1$ and $\mathrm{condexp}(\tau)=2$. (The entry $\tau_{\mathrm{sc},p}(u,2,e)$ in \cite[Table 1]{DFV22} records $\mathrm{condexp}(\tau)=2$ rather than $\mathrm{condexp}(\chi)$ in the middle slot, we use the  $\mathrm{condexp}(\chi)$ convention of their \S2.5 throughout.)} The specific character data needed for the local $H^0$ computations are introduced at the beginning of each section as required. The inertial types treated in this paper are as follows:

\begin{table}[h]
\centering
\renewcommand{\arraystretch}{1.3}
\begin{tabular}{llll}
\toprule
Reduction type & $p$ & Inertial type & Section \\
\midrule
Add., pot.\ mult. & $p \geq 3$ & $\tau_{\mathrm{St},p} \otimes \epsp$ & \S\ref{sec:twisted-steinberg} \\
Add., pot.\ good & $p \geq 5$ & $\tau_{\mathrm{ps},p}(1,1,e)$, $e \mid (p-1)$ & \S\ref{sec:principal-series} \\
Add., pot.\ good & $p \geq 5$ & $\tau_{\mathrm{sc},p}(u,1,e)$, $e \mid (p+1)$ & \S\ref{sec:supercuspidal} \\
Add., pot.\ good & $p = 3$ & $\tau_{\mathrm{ps},3}(1,2,3)$, $\tau_{\mathrm{ps},3}(1,2,3)\otimes\varepsilon_3$ & \S\ref{sec:principal-series} \\
Add., pot.\ good & $p = 3$ & $\tau_{\mathrm{sc},3}(-1,1,4)$; $\tau_{\mathrm{sc},3}(\pm 3,2,6)$, & \S\ref{sec:supercuspidal} \\
                 &         & $\tau_{\mathrm{sc},3}(-1,2,3)$, $\tau_{\mathrm{sc},3}(-1,2,3)\otimes\varepsilon_3$, & \\
                 &         & $\tau_{\mathrm{sc},3}(-3,4,6)_j$ ($j=0,1,2$) & \\
\bottomrule
\end{tabular}
\caption{Inertial types treated in this paper, following \cite{DFV22}.}
\label{tab:types}
\end{table}

\begin{remark}[Excluded cases]
\label{rem:excluded}
We exclude $p = 2$, which corresponds to the exceptional supercuspidal types $\tau_{\mathrm{ex},2,i}$ as they require separate and more detailed treatment, and will be addressed in subsequent work.  We also note that the untwisted Steinberg case $\tau_{\mathrm{St},p}$ with $p \| N$ was studied in \cite{Bin26}. The present paper takes up where that analysis leaves off, treating the deeper ramification cases $p^2 \mid N$ not covered there.
\end{remark}

\section{The Twisted Steinberg Case}
\label{sec:twisted-steinberg}

\subsection{Setup and local representation}

Throughout this section, let $p \geq 3$ be a prime with $p^2 \mid N$,and suppose $E_f/\Q$ has additive, potentially multiplicative reduction at $p$. By \cite[Prop.4.1.1(b)(i)]{DFV22}, the inertial Weil-Deligne type at$p$ is $$ \tau_{E_f} \simeq \tau_{\mathrm{St},p} \otimes \epsp$$ with $v_p(N_E) = 2$, where $\epsp$ is the ramified quadratic character associated to
the totally ramified extension $\Q_p(\sqrt{p})/\Q_p$. In particular:
$$
  \epsp|_{I_p} \text{ is the nontrivial quadratic character of } I_p,  \qquad  \epsp(\Frob_p) = 1
$$

\begin{definition}
\label{def:min-ram-twisted-steinberg}
A deformation $\rho : G_{\Q,S} \to \GL_2(R)$ of $\rhobar$ is \emph{minimally ramified at $p$} in the twisted Steinberg case if $\rho|_{G_p}$ has inertial type $\tau_{\mathrm{St},p} \otimes \epsp$ and conductor exponent $v_p(N_E) = 2$. This means, in a suitable basis, $\rho|_{G_p}$ is upper triangular with diagonal characters $\chi_1 = \varepsilon_\ell \cdot \epsp$
and $\chi_2 = \epsp$, where $\varepsilon_\ell$ is the $\ell$-adic cyclotomic character.
\end{definition}

In a suitable basis, the residual local representation takes the form
$$\rhobar|_{G_p} \sim
  \begin{pmatrix} \epsl \cdot \epsp & * \\ 0 & \epsp \end{pmatrix}$$
where $* \neq 0$ is the (residual) Steinberg extension class twisted by $\epsp$. Let $\sigma \in I_p$ denote a tame inertia generator and $F = \Frob_p$. Using $\epsl(F) = p$, $\epsp(F) = 1$, $\epsl(\sigma) = 1$ (since $\epsl$ is unramified at $p \neq \ell$), and $\epsp(\sigma) = -1$, we have:
$$  \rhobar(F) = \begin{pmatrix} p & 0 \\ 0 & 1 \end{pmatrix}, \qquad  \rhobar(\sigma) = \begin{pmatrix} -1 & c_0 \\ 0 & -1 \end{pmatrix}
$$
where $c_0 \in \kappa^\times$ is the nonzero residual Steinberg class, and the tame quotient satisfies $F\sigma F^{-1} = \sigma^p$.

\subsection{The adjoint representation and its twist}

We compute $\adz\rhobar|_{G_p}$ in the standard basis
$\{e_1, e_2, e_3\}$ of trace-zero $2 \times 2$ matrices:
$$
  e_1 = \begin{pmatrix} 1 & 0 \\ 0 & -1 \end{pmatrix},
  \quad
  e_2 = \begin{pmatrix} 0 & 1 \\ 0 & 0 \end{pmatrix},
  \quad
  e_3 = \begin{pmatrix} 0 & 0 \\ 1 & 0 \end{pmatrix}.
$$
Conjugation by $\rhobar(F) = \begin{pmatrix} p & 0 \\ 0 & 1 \end{pmatrix}$
gives:
$$
  F \cdot e_1 = e_1
  \quad(\text{eigenvalue } 1),
  \quad
  F \cdot e_2 = p e_2
  \quad(\text{eigenvalue } p),
  \quad
  F \cdot e_3 = p^{-1} e_3
  \quad(\text{eigenvalue } p^{-1}).
$$
Since $p \neq \ell$, the character $\epsl$ is unramified at $p$,
so $\epsl|_{I_p} = 1$.
Conjugation by $\rhobar(\sigma) = \begin{pmatrix} -1 & c_0 \\ 0 & -1 \end{pmatrix}$
acts trivially on all three basis elements:
the ratio $(-1)/(-1) = 1$ means inertia acts trivially on each $e_i$
in the adjoint.
The $\epsp$-twist cancels completely in the ratios $\chi_i/\chi_j$,
giving as $G_p$-modules:
$$
  \adz\rhobar|_{G_p} \cong \F_\ell \oplus \epsl \oplus \epsl^{-1}
$$
where $\F_\ell$ denotes the trivial character. All three summands are unramified at $p$. Twisting by $\epsl$ yields:
$$
  \epsl \otimes \adz\rhobar|_{G_p} \cong \epsl \oplus \epsl^2 \oplus \F_\ell
$$
with Frobenius eigenvalues $p$, $p^2$, $1$ on the summands corresponding to $e_1$, $e_2$, $e_3$ respectively, and trivial inertia action on all three.

\subsection{Computation of \texorpdfstring{$H^0(G_p, \epsl \otimes \adz\rhobar)$}{H0}}

Since inertia acts trivially on all summands, $G_p$-invariants are exactly Frobenius-fixed vectors. A summand contributes to $H^0$ if and only if its Frobenius eigenvalue is $\equiv 1 \pmod{\ell}$.

\begin{proposition}
\label{prop:twisted-steinberg-h0}
Let $p \geq 3$ and $\ell \geq 5$ be distinct primes with $p^2 \mid N$, $\ell \nmid N$, and suppose the inertial type at $p$ is $\tau_{\mathrm{St},p} \otimes \epsp$. Then:
\begin{enumerate}[label=\textnormal{(\roman*)}]
  \item \textnormal{Case A} ($p \not\equiv \pm 1 \pmod{\ell}$): $\dim_\kappa H^0(G_p, \epsl \otimes \adz\rhobar) = 1$, generated by $e_3$.
  \item \textnormal{Case B} ($p \equiv -1 \pmod{\ell}$): $\dim_\kappa H^0(G_p, \epsl \otimes \adz\rhobar) = 2$, generated by $e_3$ and $e_2$.
  \item \textnormal{Case C} ($p \equiv 1 \pmod{\ell}$): $\dim_\kappa H^0(G_p, \epsl \otimes \adz\rhobar) = 3$, and the full space $\epsl \otimes \adz\rhobar$ is $G_p$-fixed.
\end{enumerate}
\end{proposition}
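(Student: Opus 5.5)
The plan is to read off $H^0(G_p, \epsl \otimes \adz\rhobar)$ from the explicit $G_p$-module structure computed in the preceding subsection. There, $\epsl \otimes \adz\rhobar|_{G_p}$ was identified with $\epsl \oplus \epsl^2 \oplus \F_\ell$, spanned by $e_1, e_2, e_3$, with inertia acting trivially. Since $I_p$ is normal in $G_p$ and acts trivially, $G_p$-invariants are the invariants of the procyclic quotient $G_p/I_p$, topologically generated by $F = \Frob_p$. So $H^0$ is the kernel of $F - 1$ on the three-dimensional $\kappa$-space.

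In the basis $\{e_1,e_2,e_3\}$, $F$ acts diagonally with eigenvalues $p, p^2, 1$, read in $\F_\ell \subseteq \kappa$. A vector $v = a e_1 + b e_2 + c e_3$ is therefore fixed if and only if each nonzero coordinate sits on an eigenvalue congruent to $1 \pmod{\ell}$. Hence $\dim_\kappa H^0$ equals the number of indices $i$ whose eigenvalue is $\equiv 1 \pmod \ell$, and $H^0$ is spanned by the corresponding $e_i$. The cases then split as follows, using that $p \not\equiv 0 \pmod\ell$ by (H2):
\begin{itemize}
\item $e_3$ (eigenvalue $1$) always contributes.
\item $e_2$ (eigenvalue $p^2$) contributes if and only if $p \equiv \pm 1 \pmod \ell$, since $\F_\ell^\times$ is cyclic and $x^2=1$ has exactly the roots $\pm 1$.
\item $e_1$ (eigenvalue $p$) contributes if and only if $p \equiv 1 \pmod \ell$.
\end{itemize}
In Case A only $e_3$ survives, giving dimension $1$. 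In Case B we have $p^2 \equiv 1$ but $p \equiv -1 \not\equiv 1$ because $\ell \ge 5 > 2$, so $e_2$ and $e_3$ survive, giving dimension $2$. In Case C all three survive, so the whole space is fixed and the dimension is $3$.

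The main obstacle is not this counting, which is routine, but the input it rests on: that inertia acts trivially on the twisted adjoint. The $\epsp$-twist cancels in the ratios of diagonal characters, so the semisimplification is certainly unramified. However, $\rhobar(\sigma)$ has the unipotent part $\begin{pmatrix}1 & -c_0\\ 0 & 1\end{pmatrix}$ after removing the scalar $-1$, and conjugation by it sends $e_3 \mapsto e_3 - c_0 e_1 + \cdots$, which is not the identity when $c_0 \neq 0$. I would first check whether the Steinberg extension class is genuinely ramified residually or only unramified (a Frobenius-type extension). In the latter case one can arrange $\rhobar(\sigma) = -1$ and the diagonal description holds verbatim.

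If instead $c_0 \neq 0$ is forced, the argument must be redone on the non-split module. Then $\ker(\sigma - 1)$ on $\adz\rhobar$ is the line spanned by $e_2$, and the twisted Frobenius eigenvalue on $e_2$ is $p^2$. This would replace the counts above by a smaller answer. So at this step I would first verify the trivial-inertia claim of the previous subsection, and only then carry out the eigenvalue count as described.
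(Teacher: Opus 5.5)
Your eigenvalue count is exactly the paper's proof. The paper also takes $\epsl\otimes\adz\rhobar|_{G_p}\cong\epsl\oplus\epsl^2\oplus\F_\ell$ with trivial inertia, and counts which of the Frobenius eigenvalues $p,p^2,1$ on $e_1,e_2,e_3$ are $\equiv 1 \pmod\ell$. The only justification it gives for the input you distrust is that ``the ratio $(-1)/(-1)=1$'' on the diagonal of $\rhobar(\sigma)$, and that only controls the semisimplification. Your objection is correct and decisive. Conjugation by $\rhobar(\sigma)$ acts as follows:
\begin{itemize}
\item it sends $e_1\mapsto e_1+2c_0e_2$;
\item it fixes $e_2$;
\item it sends $e_3\mapsto e_3-c_0e_1-c_0^2e_2$.
\end{itemize}
So for $c_0\neq 0$ and $\ell\neq 2$, the $I_p$-invariants of $\epsl\otimes\adz\rhobar$ are exactly $\kappa e_2$, on which $\Frob_p$ acts by $p^2$. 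With the paper's standing choice $c_0\in\kappa^\times$, the correct dimensions are $0$, $1$, $1$ in Cases A, B, C (spanned by $e_2$ in B and C), not $1$, $2$, $3$. The error is in the paper's argument, not in yours.

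This also settles the check you postpone, and not in the direction that would rescue the statement. In Cases A and B, $p\not\equiv 1\pmod\ell$ gives $H^1_{\mathrm{ur}}(G_p,\epsl)=\kappa/(p-1)\kappa=0$. So every non-split extension of $\epsp$ by $\epsl\epsp$ is ramified: there is no ``non-split but $\rhobar(\sigma)=-1$'' option, and $*\neq 0$ forces $c_0\neq 0$. In Case C an unramified non-split class does exist. But then $\rhobar(\Frob_p)$ is a nontrivial unipotent (both diagonal entries are $1$), and $H^0$ is again only $\kappa e_2$. Hence the values $1,2,3$ hold exactly when $\rhobar|_{G_p}\cong\epsl\epsp\oplus\epsp$ is split. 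That contradicts the paper's ``$*\neq 0$''. It is, however, what the lift in \S\ref{subsec:twisted-steinberg-rings} tacitly assumes, since there the upper-right entry of $\rho(\sigma)$ is $T_4\in\mR$ and so vanishes residually. Your proposal therefore cannot be completed to a proof of the proposition as posed, and no argument can. The statement needs either the extra hypothesis that $\rhobar|_{G_p}$ is split, or the corrected dimensions $0,1,1$.
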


\begin{proof}
The cancellation $\epsp \cdot \epsp^{-1} = 1$ in $\chi_i/\chi_j$ was established in the previous section, so $\epsl|_{I_p} = 1$ gives trivial inertia action on all summands. The Frobenius eigenvalues $1$, $p$, $p^2$ on $e_3$, $e_1$, $e_2$ follow from the explicit matrix $\rhobar(F)$ computed above. In Case A, neither $p$ nor $p^2$ is $\equiv 1 \pmod{\ell}$, so only $e_3$ contributes. In Case B, $p \equiv -1 \pmod{\ell}$ implies $p^2 \equiv 1 \pmod{\ell}$ while $p \not\equiv 1$, so $e_3$ and $e_2$ contribute. In Case C, $p \equiv 1 \pmod{\ell}$ implies $p^2 \equiv 1 \pmod{\ell}$ as well, so all three summands are fixed.
\end{proof}

\begin{remark}[$\ell = 3$]
\label{rem:l3-twisted-steinberg}
Although (H2) requires $\ell \geq 5$, we note that the local computation of Proposition \ref{prop:twisted-steinberg-h0} holds verbatim for $\ell = 3$: the dimensions $1$, $2$, $3$ in Cases A, B, C remain correct, since they depend only on Frobenius eigenvalues. If one relaxes (H2) to allow $\ell = 3$, then by Remark \ref{rem:l3-gw} the Greenberg-Wiles formula degrades to an inequality, so the deformation ring presentations of \S\ref{subsec:twisted-steinberg-rings} should in that case be understood as presentations of quotients of the stated rings.
\end{remark}

\subsection{Explicit matrix computation and deformation ring presentations}
\label{subsec:twisted-steinberg-rings}

Let $R \in \mathcal{C}$ and let $\rho : G_{\Q,S} \to \GL_2(R)$ be a minimally ramified deformation of $\rhobar$. By Definition \ref{def:min-ram-twisted-steinberg}, in a suitable basis:
$$
  \rho(F) = \begin{pmatrix} p(1+T_1) & T_2 \\ 0 & 1+T_3 \end{pmatrix}, \qquad \rho(\sigma) = \begin{pmatrix} -1 & T_4 \\ 0 & -1 \end{pmatrix}
$$
where $T_1, T_2, T_3, T_4 \in \mR$ are formal parameters (additional parameters $T_5$, $T_6$ appear in Cases B and C). These are subject to the tame inertia relation $\rho(F)\rho(\sigma)\rho(F)^{-1} = \rho(\sigma)^p$.

\medskip
\noindent\textit{Derivation of the relation.}
Since $N := \begin{pmatrix} 0 & T_4 \\ 0 & 0 \end{pmatrix}$ is nilpotent with degree 2, we have
$$
  \rho(\sigma)^p = (-I + N)^p = -I + pN = \begin{pmatrix} -1 & pT_4 \\ 0 & -1 \end{pmatrix}
$$
Computing $\rho(F)\rho(\sigma)\rho(F)^{-1}$ directly:
$$
  \rho(F)\rho(\sigma) = \begin{pmatrix} -p(1+T_1) & p(1+T_1)T_4 -T_2 \\ 0 & -(1+T_3) \end{pmatrix},
$$
$$
  \rho(F)^{-1} = \begin{pmatrix} \frac{1}{p(1+T_1)} & \frac{-T_2}{p(1+T_1)(1+T_3)} \\ 0 & \frac{1}{1+T_3} \end{pmatrix},
$$
so that
$$
  \rho(F)\rho(\sigma)\rho(F)^{-1} = \begin{pmatrix} -1 & \dfrac{p(1+T_1)}{1+T_3}T_4 \\ 0 & -1 \end{pmatrix}
$$
Note that the $T_2$ terms cancel. Setting this equal to $\rho(\sigma)^p$ and comparing upper-right entries:
$$
  \frac{p(1+T_1)}{1+T_3}T_4 = pT_4\implies T_4 \cdot \left(\frac{1+T_1}{1+T_3} -1\right) = 0 \implies T_4(T_1 -T_3) = 0
$$
where the last step uses that $p$ and $(1+T_3)$ are units in $R$.

\begin{theorem}
\label{thm:twisted-steinberg-ring}
Assume \textnormal{(H1)-(H4)}, $\ell \geq 5$, $p^2 \mid N$, the inertial type at $p$ is $\tau_{\mathrm{St},p} \otimes \epsp$, and $\ShaOne(G_{\Q,S}, \epsl \otimes \adz\rhobar) = 0$. Write the universal minimally ramified lift as above.
Then:
\begin{enumerate}[label=\textnormal{(\roman*)}]
  \item \textnormal{Case A} ($p \not\equiv \pm 1 \pmod{\ell}$):
    $d_2 = 1$ and
    $$
      \Runiv \cong \Wk[[T_1, T_2, T_3, T_4]]\big/ \langle T_4(T_1 -T_3) \rangle
    $$
  \item \textnormal{Case B} ($p \equiv -1 \pmod{\ell}$):
    $d_2 = 2$ and
    $$
      \Runiv \cong \Wk[[T_1, T_2, T_3, T_4, T_5]] \big/ \langle T_4(T_1 -T_3),
      T_5\bigl((1+T_1)(1+T_3) -1\bigr) \rangle
    $$
  \item \textnormal{Case C} ($p \equiv 1 \pmod{\ell}$):
    $d_2 = 3$ and
    $$
      \Runiv \cong \Wk[[T_1, T_2, T_3, T_4, T_5, T_6]] \big/ \langle T_4(T_1 -T_3),
      T_5\bigl((1+T_1)(1+T_3) -1\bigr), T_6(T_1 -T_3) \rangle
    $$
\end{enumerate}
\end{theorem}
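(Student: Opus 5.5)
The plan is to combine the dimension count of \S\ref{subsec:local-global} with the local computations already made. First, the value of $d_2$. Since $\ShaOne = 0$, Proposition \ref{prop:local-global} (equivalently Corollary \ref{cor:obstruction-criterion}) gives $d_2 = \sum_{p\in S}\dim_\kappa H^0(G_p,\epsl\otimes\adz\rhobar)$. For this to equal the contribution at the single prime $p$, we must assume the other primes of $S$ contribute nothing. The terms at $\ell$ and $\infty$ vanish by the proof of Theorem \ref{thm:gw}. For any other $p'\mid N$ we either appeal to Corollary \ref{cor:multiple-primes} or implicitly take $p$ to be the only obstructed prime. Proposition \ref{prop:twisted-steinberg-h0} then gives $d_2 = 1,2,3$ in Cases A, B, C. The count $d_1 = 3+d_2$ then presents $\Runiv$ as a quotient of $\Wk[[T_1,\dots,T_{3+d_2}]]$ by at most $d_2$ relations.

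Second, we identify the relations. We parametrize the universal minimally ramified lift at $p$ as in \S\ref{subsec:twisted-steinberg-rings}, with $\rho(F)$ and $\rho(\sigma)$ in the displayed triangular form. We impose the single tame relation $F\sigma F^{-1}=\sigma^p$. The computation above shows this relation is equivalent to $T_4(T_1-T_3)=0$, which is the Case A relation.

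In Cases B and C we need extra relations, one for each new generator of $H^0$. By Proposition \ref{prop:twisted-steinberg-h0} these generators are $e_2$ in Case B, and both $e_2$ and $e_1$ in Case C. Under local Tate duality, each such invariant pairs with a new class in $H^2(G_p,\adz\rhobar)$. We would match each one with a deformation parameter that is no longer forced to vanish, as follows. In Case B, the lower-left entry $T_5$ (the $e_3$-direction of $\rho(F)$ or $\rho(\sigma)$) becomes unobstructed at first order, since $p^{-1}\equiv p$ there. Redoing the commutator computation with a general, not upper-triangular, $\rho(F)$, the lower-left entry of $\rho(F)\rho(\sigma)\rho(F)^{-1}-\rho(\sigma)^p$ should be $T_5$ times a factor comparing the product of the diagonal entries with $\det$ normalized to $1$. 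This yields $T_5((1+T_1)(1+T_3)-1)$. In Case C, the $e_1$ (diagonal) direction of $\rho(\sigma)$ opens up, with parameter $T_6$, and the diagonal comparison yields $T_6(T_1-T_3)$. The point in each case is to expand the full $2\times 2$ commutator identity entry by entry, solve for the entries that the relation determines, and identify each surviving equation with a relation $r_i$.

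Third, we show these relations generate the full kernel. The proposed argument is that the displayed ring has relative dimension $(3+d_2)-d_2=3$ over $\Wk$, that is, Krull dimension $4$, which matches the lower bound $d_1-d_2$ from Mazur's presentation. Each relation is a product of two distinct prime elements. Moreover, the relations form a regular sequence, which we check by exhibiting the components $\{T_4=0\}$, $\{T_1=T_3\}$, and so on. Consequently any further relation would drop the dimension below $d_1-d_2$, which is a contradiction.

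We expect the main obstacle to be the second step in Cases B and C. The upper-triangular parametrization of Definition \ref{def:min-ram-twisted-steinberg} produces only one relation. So one must justify which extra parameters $T_5,T_6$ enter, and show that the tame relation, expanded in a non-triangular basis, gives exactly the stated factors rather than, say, units times them. Here we would first compute to second order in $\mR$ via the cup product $H^1\times H^1\to H^2(G_p,\adz\rhobar)$, and then lift the result by the same matrix identity.
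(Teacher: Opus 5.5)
The gap is in Cases B and C and in your Step 3. Your Step 1 matches the paper's Step 1, and your Case A computation matches its Step 2. Your caveat that the other primes of $S$ must contribute nothing is a hypothesis the statement tacitly needs.

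\textbf{Cases B and C.} The mechanism you propose cannot produce the stated relations. Because $\rho(\sigma)=-I+N$ with $N=\bigl(\begin{smallmatrix}0&T_4\\0&0\end{smallmatrix}\bigr)$ and $N^2=0$, one has $\rho(F)\rho(\sigma)\rho(F)^{-1}-\rho(\sigma)^p=\rho(F)N\rho(F)^{-1}-pN$, which is linear in $T_4$. So however general you take $\rho(F)$, every relation coming from the tame identity lies in the ideal $(T_4)$.

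Concretely, put $T_5$ in the lower-left entry of $\rho(F)$. Then $\rho(F)N=pN\rho(F)$ gives exactly $T_4T_5=0$ and $T_4(T_1-T_3)=0$; the lower-left entry of the difference is $-T_4T_5^2/\det\rho(F)$. It never gives $T_5\bigl((1+T_1)(1+T_3)-1\bigr)$, which does not lie in $(T_4)$.

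In Case C, conjugation by the upper-triangular $\rho(F)$ preserves the diagonal of $\rho(\sigma)$. Freeing that diagonal to $-(1+T_6)$, $-(1+T_6)^{-1}$ therefore gives $(1+T_6)^{p-1}=1$. This is a relation in $T_6$ alone, of $\ell$-power type since $\ell\mid p-1$, not $T_6(T_1-T_3)$.

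The paper does not extract these relations from the tame identity at all. Its Steps 3--4 attach $T_5$ to the $e_2$-direction (not $e_3$, as you do) and assert the product shape from bilinearity of the cup product. Expanding the commutator entry by entry will not reach the theorem. What is missing is an actual evaluation of $H^1(G_p,\adz\rhobar)\times H^1(G_p,\adz\rhobar)\to H^2(G_p,\adz\rhobar)$ on explicit cocycles, plus an argument that this local class is the global obstruction.

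\textbf{Step 3.} This step has no counterpart in the paper, which simply defers to its references, and it fails. Write $\mathfrak{m}$ for the maximal ideal of the ambient power series ring.
\begin{itemize}
\item In Case C the relations are not a regular sequence: $T_4\cdot T_6(T_1-T_3)=T_6\cdot T_4(T_1-T_3)$, while $T_4\notin(r_1,r_2)\subset\mathfrak{m}^2$. The height-two prime $(T_1-T_3,T_5)$ contains all three relations, so the displayed Case C ring has Krull dimension $5$, not $4$.
\item More basically, matching Krull dimension against Mazur's bound $1+d_1-d_2$ cannot exclude further relations, because these rings are not domains. In Case A, adjoining $T_4^2$ kills the component $\{T_1=T_3\}$ but leaves the dimension ($4$) and the tangent space unchanged.
\end{itemize}
In Case A the correct finish is by divisibility. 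The kernel $J$ is principal ($d_2=1$), lies in $(\ell)+\mathfrak{m}^2$, and contains $T_4(T_1-T_3)$. By unique factorization in $\Wk[[T_1,\dots,T_4]]$, its generator is an associate of $T_4(T_1-T_3)$.

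Even this presupposes something neither you nor the paper justifies: that the local matrix entries $T_1,\dots,T_4$ at $p$ form a minimal generating set of $\mathfrak{m}_{\Runiv}$. That requires controlling the image of $H^1(G_{\Q,S},\ad\rhobar)\to H^1(G_p,\ad\rhobar)$.

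\textbf{The imported values of $d_2$.} The values $d_2=1,2,3$ rest on Proposition \ref{prop:twisted-steinberg-h0}, whose proof lets $\rhobar(\sigma)$ act trivially on $\adz\rhobar$. With $c_0\neq 0$, conjugation by $\rhobar(\sigma)$ sends $e_3\mapsto e_3-c_0e_1-c_0^2e_2$ and $e_1\mapsto e_1+2c_0e_2$. So only $e_2$, with Frobenius eigenvalue $p^2$, is inertia-fixed, and that input must be rechecked before anything is built on it.
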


\begin{proof}
\textit{Step 1: Number of variables.}
By Proposition \ref{prop:twisted-steinberg-h0} and Theorem \ref{thm:gw} (with $\ShaOne = 0$), $d_2$ equals $1$, $2$, $3$ in Cases A, B, C. The Euler characteristic formula gives $d_1 = 3 + d_2$.

\textit{Step 2: Case A relation.}
The diagonal entries of $\rho(\sigma)$ are forced to be $-1$: the tame inertia relation on diagonal entries gives $(p-1)\alpha = 0$ and $(p-1)\beta = 0$ where $\alpha, \beta \in \mR$ are the diagonal deformation parameters, and since $p \not\equiv 1 \pmod{\ell}$, the factor $(p-1)$ is a unit in $\Wk$, forcing $\alpha = \beta = 0$. The relation $T_4(T_1 -T_3) = 0$ then follows from the explicit matrix computation above.

\textit{Step 3: Case B relation.}
When $p \equiv -1 \pmod{\ell}$, the direction $e_2 = \begin{pmatrix} 0 & 1 \\ 0 & 0 \end{pmatrix}$ enters $H^0(G_p, \epsl \otimes \adz\rhobar)$ alongside $e_3$ (Proposition \ref{prop:twisted-steinberg-h0}(ii)), because the Frobenius eigenvalue on $e_2$ in $\epsl \otimes \adz\rhobar$ is
$p^2 \equiv 1 \pmod{\ell}$. By the Greenberg-Wiles formula (Theorem \ref{thm:gw}) this raises the obstruction dimension to $d_2 = 2$, so $\Runiv$ acquires one new generator beyond those of Case A. We call it $T_5$.

The generator $T_5$ parametrizes a genuinely new deformation direction: the lift of the $e_2$-class, lying in the $\bar{\varepsilon}_\ell^{2}$-eigenspace of $\Frob_p$ on
$\epsl \otimes \adz\rhobar$. It is not the inertia extension class (that is $T_4$, the upper-right of $\rho(\sigma)$, constrained in Step 2), nor is it the unobstructed Frobenius off-diagonal entry $T_2$, which appears in no relation and remains free.

The relation satisfied by $T_5$ is the second-order obstruction to lifting the $e_2$-deformation, computed by the Poitou-Tate cup product
$$
  H^1(G_p, \adz\rhobar) \otimes H^1(G_p, \adz\rhobar)
  \xrightarrow{\cup} H^2(G_p, \adz\rhobar)
$$
This pairing is bilinear, so the obstruction is a product. Writing $\rho(F) = \begin{pmatrix} p(1+T_1) & T_2 \\ 0 & 1+T_3 \end{pmatrix}$, this relation we obtain is
$$
  \frac{\det\rho(F)}{p} -1 = (1+T_1)(1+T_3) - 1 = T_1 + T_3 + T_1 T_3
$$
The cup-product obstruction in the $e_2$-direction is therefore the product of the $e_2$-coordinate $T_5$:
$$
  T_5 \cdot \bigl((1+T_1)(1+T_3) -1\bigr) = 0
$$
The relation is a product including $T_5$ precisely because the obstruction is a cup product: the $e_2$-deformation lifts if and only if either it is trivial ($T_5 = 0$) or the Frobenius torus already respects the cyclotomic determinant ($(1+T_1)(1+T_3) = 1$). This is the same mechanism that produces the product relation $T_4(T_1 -T_3) = 0$ of Step 2, now applied to the $e_2$-rather than the $e_3$-eigendirection. Together, Steps 2 and 3 give the two relations of Case B.

\textit{Step 4: Case C relations.}
When $p \equiv 1 \pmod{\ell}$, the diagonal direction $e_1 = \begin{pmatrix} 1 & 0 \\ 0 & -1 \end{pmatrix}$ also enters $H^0(G_p, \epsl \otimes \adz\rhobar)$
(Proposition \ref{prop:twisted-steinberg-h0}(iii)), since its Frobenius eigenvalue $p \equiv 1 \pmod\ell$. Now $d_2 = 3$ and a third generator $T_6$ appears, parametrizing the lift of the
$e_1$-class. (Equivalently: since $\ell \mid (p-1)$, the tame condition $(p-1)\alpha = 0$ on the $\rho(\sigma)$-diagonal no longer forces $\alpha = 0$, freeing this diagonal direction, we name the
resulting parameter $T_6$.)

\noindent As in Step 3, the relation on $T_6$ is the cup-product obstruction in the $e_1$-direction. Since $e_1 = \mathrm{diag}(1,-1)$, it pairs with the difference of the two Frobenius diagonal parameters, $T_1 -T_3$, giving the bilinear relation
$$
  T_6 (T_1 -T_3) = 0
$$
Thus Case C has the three relations $T_4(T_1-T_3)=0$, $T_5\bigl((1+T_1)(1+T_3)-1\bigr)=0$, and $T_6(T_1-T_3)=0$. The presentation as a quotient of $\Wk[[T_1,\dots,T_6]]$ follows from the obstruction computations of \cite{Bos92, Boe99}.
\end{proof}

\begin{remark}
\label{rem:geometric-twisted-steinberg}
The relation $T_4(T_1 -T_3) = 0$ in Case A defines a union of two irreducible components in $\mathrm{Spec}\Runiv[1/\ell]$: the component $\{T_4 = 0\}$ where the extension class vanishes and the deformation is upper-triangular, and the component $\{T_1 = T_3\}$ where the two Frobenius diagonal parameters coincide. This two-component structure is characteristic of obstructed deformation rings arising from a single local constraint, see \cite{Bos92, Boe99}. The Case B and C relations admit the same interpretation: each is a cup-product obstruction $T_i \cdot \Phi_i = 0$ in one of the eigendirections $e_1, e_2, e_3$, with a suitable polynomial type relation $\Phi_i$. The reducibility of $\mathrm{Spec}\Runiv[1/\ell]$ is thus a direct reflection of the bilinearity of the Poitou-Tate cup product.
\end{remark}

\begin{remark}
\label{rem:compare-bin26}
In the untwisted Steinberg case ($p \| N$, inertial type $\tau_{\mathrm{St},p}$),
the relation in \cite[Thm. 4.7]{Bin26} takes the form $T_1 T_2 -T_2 -p T_3 T_4 + T_4 = 0$ in a coordinate system where the Steinberg extension class and the Frobenius diagonal parameters appear simultaneously. In the twisted Steinberg case, the $\epsp$-twist leaves $\adz\rhobar$ unchanged (the twist cancels in the adjoint) but modifies the inertia matrix from
$\begin{pmatrix} 1 & c \\ 0 & 1 \end{pmatrix}$
to $\begin{pmatrix} -1 & c \\ 0 & -1 \end{pmatrix}$.
This makes the relation $N^2 = 0$ exact, causes the $T_2$ cross terms to cancel exactly in the conjugation computation, and yields the cleaner relation $T_4(T_1 -T_3) = 0$.
\end{remark}

\subsection{The global obstruction term and strict congruences}
\label{subsec:twisted-steinberg-global}

When $\ShaOne \neq 0$, Theorem \ref{thm:gw} shows that additional global obstructions are present beyond the local ones computed in the previous section. By \cite[Prop. 4.4, Cor. 4.2]{Bin26}, the nontriviality of $\ShaOne(G_{\Q,S}, \epsl \otimes \adz\rhobar)$ is equivalent, under the Selmer hypotheses of \cite{Wes04, Wes05}, to $\ell$ being a strict congruence prime for $f$. Recall from \cite[Def. 4.3]{Bin26} that $\ell$ is a \emph{strict congruence prime} for $f$ if there exists a
weight $2$ newform $g$ of level $N$, not Galois-conjugate to $f$, with $a_n(f) \equiv a_n(g) \pmod{\lambda}$ for all but finitely many $n$ and some prime $\lambda$ above $\ell$.

When $f$ has rational Fourier coefficients (so $E_f/\Q$ is an elliptic curve of conductor $N$), strict congruence primes are related to the modular degree and Tamagawa numbers by the following result.

\begin{proposition}
\label{prop:sha-formula}
Let $f$ correspond to an elliptic curve $E_f/\Q$ of conductor $N$
with $p^2 \mid N$ for at least one prime $p \mid N$.
Assume \textnormal{(H1)-(H4)}.
Then, regardless of the inertial type at $p$,
$$
  \dim_\kappa \ShaOne(G_{\Q,S}, \epsl \otimes \adz\rhobar) = \mathrm{ord}_\ell\left(\frac{m_{E_f} \cdot \prod_{q \mid N} c_q(E_f)}{\#E_f(\F_\ell)^2}
  \right)
$$
where $m_{E_f}$ is the modular degree of the optimal parametrization
$X_0(N) \to E_f$,
$c_q(E_f)$ are the Tamagawa numbers of $E_f$ at primes $q \mid N$,
$\#E_f(\F_\ell) = \ell + 1 -a_\ell(f)$ is the number of points on the
good reduction of $E_f$ at $\ell$,
and $\mathrm{ord}_\ell$ denotes the $\ell$-adic valuation.
\end{proposition}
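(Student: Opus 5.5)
Our plan is to reduce the statement to the corresponding result of \cite{Bin26}, which treats the same global term when $p \| N$, and to check that nothing in that argument used semistability at $p$. The key observation is that $\ShaOne(G_{\Q,S}, \epsl \otimes \adz\rhobar)$ is a purely global object. By Poitou--Tate duality it is dual to $\Sh^2(G_{\Q,S}, \adz\rhobar)$. Using the exact sequence \eqref{eq:pt} together with Theorem \ref{thm:gw}, it is identified with the part of $H^2(G_{\Q,S}, \adz\rhobar)$ not accounted for by the local terms. Hence its dimension is controlled by the Selmer group of $\adz\rhobar$ with the minimal local conditions of (H4), relative to the Selmer group with relaxed conditions.

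The first step invokes \cite[Prop.~4.4, Cor.~4.2]{Bin26} and \cite{Wes04, Wes05}. Under (H1)--(H2) and the minimality hypothesis (H4), this identifies the $\ell$-part of the minimal Selmer group of $\adz\rho_f$ (the $\ell$-divisible version) with the congruence module $\mathbb{T}_{\mathfrak m}/\eta$, where $\eta$ is the congruence ideal of $f$. Here we take the $R=\mathbb{T}$ theorem at the minimal level as known in the ordinary, $\ell \geq 5$ setting (Wiles, Taylor--Wiles, Diamond). This is the step where the inertial type enters: minimality must be defined through the fixed inertial type $\tau_{E_f}$ of \S\ref{subsec:inertial-types}. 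We will argue that for each type of Table \ref{tab:types} with $v_p(N_E)=2$ (and the $p=3$ types), the local deformation condition coincides with ``same conductor exponent and same type''. For such a condition the Diamond--Flach--Guo/Wiles numerical criterion gives $\#\mathrm{Sel}_{\min} = \#(\mathcal O/\eta)$ up to the local Tamagawa factors at $q \mid N$. Those factors are exactly the $c_q(E_f)$ once they are identified with the local Bloch--Kato terms for $\adz$. This gives the ``regardless of the inertial type'' claim.

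The second step converts $\eta$ into geometry. By Ribet/Zagier and Cremona--Agashe--Ribet--Stein, $\mathrm{ord}_\ell(\eta) = \mathrm{ord}_\ell(m_{E_f})$ up to the Manin constant and the ratio of periods, which are $\ell$-units for $\ell \geq 5$ with $\ell \nmid N$. The term $\#E_f(\F_\ell)^2$ arises from the local condition at $\ell$. The ordinary Bloch--Kato condition differs from the Greenberg condition by the Euler factor $(1-a_\ell+\ell)$ at the $\ell$-adic place, appearing once for each of the two off-diagonal Frobenius eigenlines. We would then pass from this Selmer order to $\dim_\kappa \ShaOne$ by taking $\ell$-torsion and applying Nakayama to the cyclic $\mathcal O$-module, and combine the pieces into the displayed valuation.

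We expect the main obstacle to be the first step at the primes $p$ with $p^2 \mid N$. There we must check that the local Tamagawa/Euler factor for $\adz$ under the typed condition is exactly $c_p(E_f)$ and contributes no extra $\ell$-power. Our first attempt would be to compare $H^1_f(G_p,\adz\rho_f)$ with the unramified classes via the explicit descriptions in each section: Proposition \ref{prop:twisted-steinberg-h0} in the twisted Steinberg case, and the analogous computations for the principal series and supercuspidal types. We would then verify case by case that the difference is governed by $E_f(\Q_p)/E_f^0(\Q_p)$. In the additive case this group has order dividing $4$, so its order is prime to $\ell \geq 5$, which settles the comparison.
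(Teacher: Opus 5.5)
Your route is essentially the paper's. The paper also passes from $\ShaOne$ to an $\ell$-adic Selmer group of the adjoint via \cite{Wes04}, controls its order by the adjoint Bloch--Kato result of \cite{DFG04}, and converts this into $m_{E_f}$ via Manin--Stevens and \cite{ARS06}. The result of \cite{DFG04} is itself proved through $R=\mathbb{T}$ and the numerical criterion, which is your congruence-module step. There is, however, a genuine gap where an order becomes a dimension, and your own mechanism exposes it. If the minimal Selmer group is $M\cong\mathcal{O}/\ell^{n}$ (with $\mathcal{O}=\Wk$), then $M[\ell]$ is one-dimensional over $\kappa$ for every $n\geq 1$. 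So ``take $\ell$-torsion and apply Nakayama to the cyclic $\mathcal{O}$-module'' gives $\dim_\kappa\le 1$, while the valuation on the right is $n$. In general $\dim_\kappa M[\ell]$ counts generators and $\mathrm{ord}_\ell\#M$ counts length; moreover $M$ is dual to $\mathfrak{p}/\mathfrak{p}^2$, which need not be cyclic (only $\mathcal{O}/\eta$ is). Nor does Poitou--Tate give an equality at the residual level. $\ShaOne(G_{\Q,S},\epsl\otimes\adz\rhobar)$ is the dual Selmer group for the \emph{relaxed} local conditions, so it only sits inside the dual Selmer group for your minimal conditions. Wiles's formula compares the latter with the residual minimal Selmer group $H^1_{\min}(G_{\Q,S},\adz\rhobar)$, i.e.\ with the number of generators of $M$. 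So the argument can deliver at most a one-sided bound of $\dim_\kappa\ShaOne$ by the congruence module (essentially Weston's unobstructedness criterion), not the displayed equality. The paper's sketch passes over this same step with the phrase ``identified \ldots modulo $\ell$'', so citing it will not close the gap.

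The local factors are also misidentified. With $\alpha$ the unit root and $\beta=\ell/\alpha$, the Euler factor of $\adz$ at $\ell$ has Frobenius eigenvalues $\alpha/\beta$, $1$, $\beta/\alpha$. Any correction there is therefore built from $\alpha^{\pm 2}$ (e.g.\ $1-\alpha^{-2}$, divisible by $\ell$ iff $a_\ell\equiv\pm1$), not from $1-a_\ell+\ell=(1-\alpha)(1-\beta)$, which is divisible by $\ell$ iff $a_\ell\equiv 1$. Worse, (H2) does not exclude $a_\ell\equiv 1\pmod{\ell}$. In that case the right-hand side is negative whenever $\ell\nmid m_{E_f}\prod_q c_q(E_f)$, so no computation of a dimension can produce that denominator. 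At $p$ with $p^2\mid N$, the Bloch--Kato Tamagawa factor of $\adz T_\ell E_f$ is not $[E_f(\Q_p):E_f^0(\Q_p)]$. In the potentially multiplicative case (Kodaira type $\mathrm{I}_n^{*}$) the $\epsp$-twist cancels in the adjoint, and the local factor is governed by the $\ell$-divisibility of $n=-v_p(j_{E_f})$, exactly as for a Tate curve, whereas $c_p(E_f)\in\{2,4\}$ for every $n$. Bounding the component group therefore does not control the $\ell$-part; the relevant input is $\ell\nmid v_p(j_{E_f})$, which is what $c_0\neq 0$ encodes. (For additive reduction that group has order in $\{1,2,3,4\}$; type $\mathrm{IV}$ can give $3$, so ``divides $4$'' is also wrong.) In short, the target formula must be corrected (length versus dimension, and the factor at $\ell$) before any argument along these lines can establish it.
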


\begin{proof}
Under the hypotheses, the Bloch-Kato Selmer group
$H^1_f(\Q, \adz\rho_f(1))$ is identified with
$\ShaOne(G_{\Q,S}, \epsl \otimes \adz\rhobar)$ modulo $\ell$
via the comparison between Selmer groups in \cite{Wes04}
and the adjoint $L$-value formula.
By \cite[Thm. 1.1]{DFG04}, the order of the Bloch-Kato Selmer group
for $\adz\rho_f$ is controlled by the special value $L(1, \adz f)$.
For $f$ corresponding to an elliptic curve,
this special value is related to $m_{E_f}$ and the Tamagawa numbers
by the Manin-Stevens formula and \cite[Thm. 1.1]{ARS06},
the local term at $\ell$ contributing the factor $\#E_f(\F_\ell)^2$.
By (H1), $\rhobar$ is absolutely irreducible, so
$H^0(G_{\Q,S}, \adz\rhobar) = 0$ and the global torsion of $E_f$
contributes no factor; the only torsion correction is the local
point count $\#E_f(\F_\ell)$ at $\ell$.
The condition $a_\ell(f) \not\equiv 0 \pmod{\ell}$ (ordinary at $\ell$)
ensures that the local factor at $\ell$ contributes no extra power of $\ell$
beyond $\#E_f(\F_\ell)^2$,
and $\ell \nmid N$ ensures no Tamagawa contribution at $\ell$.
\end{proof}

Combining Proposition \ref{prop:sha-formula} with
Theorem \ref{thm:gw} and Proposition \ref{prop:twisted-steinberg-h0}:

\begin{corollary}
\label{cor:total-obstruction-twisted-steinberg}
Under the hypotheses of Proposition \ref{prop:sha-formula}
and in Case A, the total obstruction dimension is
$$
  d_2 = \dim_\kappa H^2(G_{\Q,S}, \adz\rhobar)
  = 1 + \mathrm{ord}_\ell\left(
    \frac{m_{E_f} \cdot \prod_{q \mid N} c_q(E_f)}
         {\#E_f(\F_\ell)^2}
  \right),
$$
and the universal deformation ring acquires additional relations
from the global term beyond those in
Theorem \ref{thm:twisted-steinberg-ring}\textnormal{(i)}.
\end{corollary}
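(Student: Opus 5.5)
The corollary is a bookkeeping consequence of three earlier results. The plan is to combine Theorem \ref{thm:gw}, Proposition \ref{prop:twisted-steinberg-h0}(i) and Proposition \ref{prop:sha-formula}, and then read off the effect on $\Runiv$ from the presentation discussed in \S\ref{subsec:local-global}.

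First, apply Theorem \ref{thm:gw}:
$$
d_2=\dim_\kappa \ShaOne(G_{\Q,S},\epsl\otimes\adz\rhobar)+\sum_{q\in S}\dim_\kappa H^0(G_q,\epsl\otimes\adz\rhobar).
$$
The proof of Theorem \ref{thm:gw} already shows that the terms at $\infty$ and at $\ell$ vanish, so only the finite primes $q\mid N$ remain. At the prime $p$ with twisted Steinberg type in Case A, Proposition \ref{prop:twisted-steinberg-h0}(i) gives exactly $1$.

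The point needing care is the other primes $q\mid N$, $q\neq p$. The displayed formula implicitly assumes their local terms vanish, as in the second half of Corollary \ref{cor:obstruction-criterion}. I would therefore read the hypotheses as including that $H^0(G_q,\epsl\otimes\adz\rhobar)=0$ for $q\neq p$, or, if several primes contribute, replace the $1$ by the sum of local dimensions as in Corollary \ref{cor:multiple-primes}. I expect this to be the main obstacle: it is a question of stating precisely which hypothesis makes the constant exactly $1$. I would try to deduce the vanishing from (H4) where possible, for example at primes $q\| N$ with $q\not\equiv\pm1 \pmod\ell$. Otherwise I would state it explicitly as an assumption.

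Next, substitute Proposition \ref{prop:sha-formula}, which identifies $\dim_\kappa\ShaOne$ with $\mathrm{ord}_\ell\bigl(m_{E_f}\prod_{q\mid N}c_q(E_f)/\#E_f(\F_\ell)^2\bigr)$ independently of the inertial type. This yields the displayed formula for $d_2$.

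Finally, for the statement about $\Runiv$, use the presentation $\Runiv\cong \Wk[[T_1,\dots,T_{3+d_2}]]/(r_1,\dots,r_{d_2})$ from \S\ref{subsec:local-global}. The Poitou-Tate sequence \eqref{eq:pt} embeds the local contribution at $p$ into the dual of $H^2$. Its quotient is $\ShaOne$, which accounts for the remaining $\dim\ShaOne$ relations. The relation $T_4(T_1-T_3)$ is still imposed, via the matrix computation preceding Theorem \ref{thm:twisted-steinberg-ring}. Hence whenever $\ShaOne\neq0$ there are additional variables and relations coming from the global term, beyond the presentation of Theorem \ref{thm:twisted-steinberg-ring}(i).
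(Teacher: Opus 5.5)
Your argument is exactly the paper's: the corollary is obtained by substituting Proposition~\ref{prop:twisted-steinberg-h0}(i) and Proposition~\ref{prop:sha-formula} into Theorem~\ref{thm:gw}, and the statement about $\Runiv$ is read off from the presentation in \S\ref{subsec:local-global}. Your concern about the other primes $q \mid N$ is justified, but it is a missing hypothesis rather than something you can derive from (H4): for instance, an unramified supercuspidal prime $q$ with $q^2 \mid N$ and $q \equiv -1 \pmod{\ell}$ contributes $1$ by Proposition~\ref{prop:sc-unram-h0}(ii), and the paper assumes this vanishing tacitly, as in the second half of Corollary~\ref{cor:obstruction-criterion}.
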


\begin{remark}
The analogous formulas in Cases B and C are obtained by replacing
the leading $1$ with $2$ and $3$ respectively,
reflecting the local obstruction dimensions computed in
Proposition \ref{prop:twisted-steinberg-h0}(ii) and (iii).
\end{remark}

\section{The Principal Series Cases}
\label{sec:principal-series}

\subsection{Setup and local representation}
\label{subsec:ps-setup}

Throughout this section, let $p \geq 3$ be a prime with $p^2 \mid N$,
and suppose $E_f/\Q$ has additive, potentially good reduction at $p$
with inertial type of principal series kind.
By \cite[Prop. 4.2.1]{DFV22} (for $p \geq 5$)
and \cite[Prop. 5.2.2]{DFV22} (for $p = 3$),
the relevant types are:
\begin{itemize}
  \item For $p \geq 5$: $\tau_{E_f} \simeq \tau_{\mathrm{ps},p}(1,1,e)$
        with $e \mid (p-1)$, $e \in \{3,4,6\}$, and $v_p(N_E) = 2$.
  \item For $p = 3$: $\tau_{E_f} \simeq \tau_{\mathrm{ps},3}(1,2,3)$
        or $\tau_{\mathrm{ps},3}(1,2,3) \otimes \varepsilon_3$,
        with $v_3(N_E) = 4$.
\end{itemize}
In both cases the inertial type is principal series, meaning
the residual representation is reducible at $p$:
$$
  \rhobar|_{G_p} \simeq \bar{\chi}_1 \oplus \bar{\chi}_2,
$$
where $\bar{\chi}_1, \bar{\chi}_2 : G_p \to \kappa^\times$ are characters
with $\bar{\chi}_1 \cdot \bar{\chi}_2 = \epsl$
(from $\det \rhobar = \epsl$).
In a suitable basis:
$$
  \rhobar(F) = \begin{pmatrix} \mu & 0 \\ 0 & p/\mu \end{pmatrix},
  \qquad
  \rhobar(\sigma) = \begin{pmatrix} \zeta & 0 \\ 0 & \zeta^{-1} \end{pmatrix},
$$
where $F = \Frob_p$, $\sigma \in I_p$ is a tame inertia generator,
$\mu = \bar{\chi}_1(F) \in \kappa^\times$, and $\zeta = \bar{\chi}_1(\sigma)
\in \kappa^\times$ is a primitive $e$-th root of unity.

\begin{definition}
\label{def:min-ram-ps}
A deformation $\rho : G_{\Q,S} \to \GL_2(R)$ of $\rhobar$
is \emph{minimally ramified at $p$} in the principal series case
if $\rho|_{G_p} \simeq \chi_1 \oplus \chi_2$ where $\chi_i$ lifts
$\bar{\chi}_i$ and the conductor exponent of $\rho|_{G_p}$
equals $v_p(N_E)$.
\end{definition}

\subsection{Both characters are ramified}
\label{subsec:ps-ramified}

We establish that both $\bar{\chi}_1|_{I_p}$ and $\bar{\chi}_2|_{I_p}$
are nontrivial. This is the key structural input for the $H^0$
computation and is not immediate from the definition.

For $p \geq 5$, the conductor formula \cite[eq. (2.2.1)]{DFV22} gives
$$
  \mathrm{condexp}(\mathrm{PS}(\chi_1, \chi_2))
  = \mathrm{condexp}(\chi_1) + \mathrm{condexp}(\chi_2)
  = v_p(N_E) = 2.
$$
Since both conductor exponents are nonneg integers summing to $2$,
and neither can be $0$ (as the trivial character has conductor
exponent $0$ and would give a semistable type, contradicting $p^2 \mid N$),
we conclude $\mathrm{condexp}(\chi_1) = \mathrm{condexp}(\chi_2) = 1$.
Hence both $\chi_1|_{I_p}$ and $\chi_2|_{I_p}$ are nontrivial.

Passing to the residual, the determinant condition gives
$\bar{\chi}_1|_{I_p} \cdot \bar{\chi}_2|_{I_p} = \epsl|_{I_p} = 1$
since $\epsl$ is unramified at $p \neq \ell$.
Thus $\bar{\chi}_2|_{I_p} = \bar{\chi}_1^{-1}|_{I_p}$.
Since $\bar{\chi}_1|_{I_p}$ has order $e \in \{3,4,6\}$ in the complex
representation, its reduction mod $\ell$ retains order $e$ as long as
$\ell \nmid e$. For $\ell \geq 5$, we have $\ell \nmid e$ for all
$e \in \{3,4,6\}$ (the only prime $\geq 5$ dividing
$\mathrm{lcm}(3,4,6) = 12$ is $5$, and $5 \nmid 3$, $5 \nmid 4$,
$5 \nmid 6$). Hence $\bar{\chi}_1|_{I_p}$ and
$\bar{\chi}_2|_{I_p} = \bar{\chi}_1^{-1}|_{I_p}$
are both nontrivial of order $e$ in $\kappa^\times$.

For $p = 3$, by \cite[Table 3]{DFV22} the character $\chi_{(1,2,3)}$
has conductor exponent $2$ and order $3$ on $I_3$, and
$v_3(N_E) = 4 = 2 + 2$, confirming both characters have conductor
exponent $2$. The conclusion is the same for $\ell \geq 5$.
For $\tau_{\mathrm{ps},3}(1,2,3) \otimes \varepsilon_3$, the $\varepsilon_3$
twist is nontrivially ramified on $I_3$, so both twisted characters
remain ramified on inertia.

\begin{remark}[$\ell = 3$]
\label{rem:l3-ps-ramified}
Hypothesis (H2) excludes $\ell = 3$; we record the following only to
indicate what changes if it is relaxed. When $\ell = 3$ and $e = 3$ or
$e = 6$, we have $\ell \mid e$, so the order of $\bar{\chi}_1|_{I_p}$
mod $\ell$ may drop below $e$.
If $\bar{\chi}_1^2|_{I_p}$ becomes trivial mod $3$, the inertia action
on the off-diagonal summands of $\adz\rhobar$ vanishes and the
$H^0$ computation requires modification; see Remark \ref{rem:l3-ps}.
\end{remark}

\subsection{The adjoint representation and computation of
\texorpdfstring{$H^0(G_p, \epsl \otimes \adz\rhobar)$}{H0}}
\label{subsec:ps-h0-comp}

We compute $\adz\rhobar|_{G_p}$ in the standard basis
$\{e_1, e_2, e_3\}$ of trace-zero matrices.
Since $\rhobar|_{G_p}$ is diagonal, conjugation gives eigencharacters:
$$
  e_1: \text{ trivial},
  \qquad
  e_2: \bar{\chi}_1\bar{\chi}_2^{-1} = \bar{\chi}_1^2 \cdot \epsl^{-1},
  \qquad
  e_3: \bar{\chi}_2\bar{\chi}_1^{-1} = \bar{\chi}_1^{-2} \cdot \epsl
$$
Since $\epsl$ is unramified at $p$, the inertia actions are:
$$
  e_1|_{I_p}: \text{ trivial},
  \quad
  e_2|_{I_p}: \bar{\chi}_1^2|_{I_p}\text{ of order }e/\gcd(2,e),
  \quad
  e_3|_{I_p}: \bar{\chi}_1^{-2}|_{I_p}\text{ of order }e/\gcd(2,e)
$$
For $e \in \{3,4,6\}$ and $\ell \geq 5$:
$e = 3$ gives order $3$; $e = 4$ gives order $2$; $e = 6$ gives order $3$.
In all cases, inertia acts nontrivially on $e_2$ and $e_3$.
Twisting by $\epsl$:
$$
  \epsl \otimes \adz\rhobar|_{G_p}
  \cong \epsl \oplus \bar{\chi}_1^2
  \oplus (\bar{\chi}_1^{-2} \cdot \epsl^2),
$$
with Frobenius eigenvalues $p$, $\mu^2$, $p^2/\mu^2$
on $e_1$, $e_2$, $e_3$ and trivial inertia on $e_1$,
nontrivial on $e_2$, $e_3$.

\begin{proposition}
\label{prop:ps-h0}
Let $p \geq 3$ and $\ell \geq 5$ be distinct primes with $p^2 \mid N$,
$\ell \nmid N$, and suppose the inertial type at $p$ is principal
series as above. Then:
\begin{enumerate}[label=\textnormal{(\roman*)}]
  \item \textnormal{Case A} ($p \not\equiv \pm 1 \pmod{\ell}$):
    $\dim_\kappa H^0(G_p, \epsl \otimes \adz\rhobar) = 0$.
  \item \textnormal{Case B} ($p \equiv -1 \pmod{\ell}$):
    $\dim_\kappa H^0(G_p, \epsl \otimes \adz\rhobar) = 0$.
  \item \textnormal{Case C} ($p \equiv 1 \pmod{\ell}$):
    $\dim_\kappa H^0(G_p, \epsl \otimes \adz\rhobar) = 1$,
    generated by $e_1$.
\end{enumerate}
\end{proposition}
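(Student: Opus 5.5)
The argument uses the eigencharacter decomposition of $\epsl \otimes \adz\rhobar|_{G_p}$ just computed, namely $\epsl \oplus \bar{\chi}_1^2 \oplus (\bar{\chi}_1^{-2}\epsl^2)$ on the lines spanned by $e_1,e_2,e_3$. The $G_p$-invariants of a direct sum of characters are the sum of invariants of those summands whose character is trivial on all of $G_p$. Because $\rhobar|_{G_p}$ is diagonal in the chosen basis, the three lines $\kappa e_i$ are $G_p$-stable, so no non-split extension subtleties arise. A vector $\sum a_i e_i$ is fixed exactly when each $a_i e_i$ is fixed. It therefore suffices to decide, line by line, whether both $\sigma$ and $F$ act trivially. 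Since $G_p$ acts through its tame quotient here (both $\bar{\chi}_i$ have order $e$ prime to $p$ on inertia, and wild inertia is a pro-$p$ group acting trivially on these $\ell$-torsion lines), invariance under $I_p$ reduces to invariance under $\sigma$.

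\emph{Lines $e_2$ and $e_3$.} For $e_2$ and $e_3$ the inertia character is $\bar{\chi}_1^{\pm 2}|_{I_p}$. By the discussion in \S\ref{subsec:ps-ramified}, $\bar{\chi}_1|_{I_p}$ has exact order $e\in\{3,4,6\}$ in $\kappa^\times$ because $\ell\ge 5$ does not divide $e$. Hence $\bar{\chi}_1^{\pm2}|_{I_p}$ has order $e/\gcd(2,e)\in\{3,2,3\}$, which is never $1$. So $\sigma$ moves $e_2$ and $e_3$, and these lines contribute nothing, independently of $\mu$ and of the case. For $p=3$ the same holds: the character $\chi_{(1,2,3)}$ has order $3$ on $I_3$, and the possible $\varepsilon_3$-twist cancels in $\bar{\chi}_1\bar{\chi}_2^{-1}$. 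This uses $\bar{\chi}_2|_{I_p}=\bar{\chi}_1^{-1}|_{I_p}$, so the ratio has order dividing $6$ but not $2$. I would double-check this step carefully, since it is the only place where the specific type data enters.

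\emph{Line $e_1$.} On $e_1$, inertia acts trivially and $F$ acts by $\epsl(F)=p$. So $e_1$ is fixed if and only if $p\equiv 1\pmod{\ell}$. This holds in Case C and fails in Cases A and B; Case B fails because $p\equiv -1\not\equiv 1$, as $\ell\neq 2$. Assembling the three lines gives dimensions $0,0,1$, with generator $e_1$ in Case C.

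The main obstacle is the claim that $\bar{\chi}_1^2|_{I_p}$ is nontrivial in the case $e=4$. There the square has order $2$, so the argument relies on the reduction map preserving the order-$4$ character. This is fine because $\mu_4\hookrightarrow\kappa^\times$ is injective for odd $\ell$. I would state this explicitly and not appeal only to the order computation in characteristic $0$.
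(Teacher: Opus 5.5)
Your proposal is correct and is essentially the paper's argument: inertia acts nontrivially on the lines $\kappa e_2,\kappa e_3$ through $\bar{\chi}_1^{\pm 2}|_{I_p}$, which has order $e/\gcd(2,e)>1$ because $\ell\nmid e$, while the $e_1$-line is the unramified character $\epsl$ with Frobenius eigenvalue $p$, fixed exactly in Case C. One aside should be dropped: $G_p$ does not act through its tame quotient when $p=3$, since $\chi_{(1,2,3)}$ has order $3=p$ and conductor exponent $2$ and is therefore wildly ramified. You never actually need that reduction, because nontriviality of $\bar{\chi}_1^{2}$ on some element of $I_p$ already kills $e_2,e_3$, and the $e_1$-line is unramified on all of $I_p$.
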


\begin{proof}
Inertia acts nontrivially on $e_2$ and $e_3$ for all $\ell \geq 5$,
so these summands contribute nothing to $H^0$ regardless of Frobenius.
The summand $e_1$ has trivial inertia action and Frobenius eigenvalue $p$,
which equals $1 \pmod{\ell}$ if and only if $p \equiv 1 \pmod{\ell}$,
i.e.\ Case C. In Cases A and B, $p \not\equiv 1 \pmod{\ell}$
forces $H^0 = 0$.
\end{proof}

\medskip
\noindent\textbf{Why Case B is unaffected by the $\mu^2$ issue.}
In Case B ($p \equiv -1 \pmod{\ell}$) the Frobenius eigenvalue on
$e_2$ is $\mu^2$, and one might ask whether $\mu^2 \equiv 1 \pmod{\ell}$
could open a contribution to $H^0$.
The answer is no: even if $\mu^2 \equiv 1 \pmod{\ell}$, inertia
acts nontrivially on $e_2$ via $\bar{\chi}_1^2|_{I_p}$ of order
$e/\gcd(2,e) \geq 1$ for $\ell \geq 5$.
Since $H^0(G_p,\cdot)$ requires invariance under all of $G_p$,
the nontrivial inertia kills any contribution from $e_2$ and $e_3$
regardless of $\mu$.
The $\mu^2$ dichotomy is therefore a Case C phenomenon only,
arising precisely because $e_1$ has trivial inertia action.

\begin{remark}[$\ell = 3$]
\label{rem:l3-ps}
When $\ell = 3$ and $e = 3$ or $e = 6$,
the character $\bar{\chi}_1^2|_{I_p}$ may become trivial mod $3$.
If so, inertia acts trivially on $e_2$, and $e_2$ could contribute
to $H^0$ in Cases A and B depending on $\mu^2$.
A full computation then requires the Case A/B/C analysis applied
to $e_2$ separately, analogous to Section \ref{sec:twisted-steinberg}.
We do not treat this in full generality here.
\end{remark}

\subsection{The local framed deformation ring and the
\texorpdfstring{$\mu^2$}{mu2} dichotomy}
\label{subsec:ps-ring}

Let $R \in \mathcal{C}$ and $\rho : G_{\Q,S} \to \GL_2(R)$
a minimally ramified deformation of $\rhobar$.
By Definition \ref{def:min-ram-ps}:
$$
  \rho(F) = \begin{pmatrix} \mu(1+T_1) & 0 \\ 0 & (p/\mu)(1+T_3)
  \end{pmatrix},
  \quad
  \rho(\sigma) = \begin{pmatrix} \zeta(1+\alpha) & T_4 \\
  0 & \zeta^{-1}(1+\beta) \end{pmatrix}
$$
where $T_1, T_3, T_4, \alpha, \beta \in \mR$. The tame inertia relation $\rho(F)\rho(\sigma)\rho(F)^{-1} = \rho(\sigma)^p$ gives:

\medskip
\noindent\textit{Diagonal entries.}
Since $e \mid (p-1)$, $\zeta^p = \zeta$ exactly, and comparing diagonals yields $(p-1)\alpha = (p-1)\beta = 0$. In Cases A and B this forces $\alpha = \beta = 0$, in Case C they are free.

\medskip
\noindent\textit{Upper-right entry.}
Setting $\alpha = \beta = 0$ for Cases A/B, the $p$-th power formula for upper-triangular matrices
with distinct diagonal entries $\zeta \neq \zeta^{-1}$ gives upper-right of $\rho(\sigma)^p$ equal to
$T_4 \cdot (\zeta^p -\zeta^{-p})/(\zeta -\zeta^{-1}) = T_4$ (using $\zeta^p = \zeta$). The conjugated matrix $\rho(F)\rho(\sigma)\rho(F)^{-1}$ scales the upper-right by $\mu(1+T_1)/((p/\mu)(1+T_3)) = (\mu^2/p)(1+T_1)(1+T_3)^{-1}$.
Setting equal to $T_4$:
\begin{equation}
\label{eq:ps-relation}
  T_4 \left(\frac{\mu^2}{p} \cdot \frac{1+T_1}{1+T_3} -1\right) = 0
  \tag{$\star$}
\end{equation}
Here, note that $\bar{\chi}_2(F) = p/\mu$ does not disappear: it appears in the ratio $\mu^2/p = \bar{\chi}_1(F)/\bar{\chi}_2(F)$, encoding both Frobenius values simultaneously via
$\bar{\chi}_1(F)\bar{\chi}_2(F) = p \pmod\ell$.

In Case C ($p \equiv 1$ in $\kappa$), relation \eqref{eq:ps-relation}
becomes:
\begin{equation}
\label{eq:ps-relation-c}
  T_4\left(\mu^2 \cdot \frac{1+T_1}{1+T_3} -1\right) = 0
  \tag{$\star\star$}
\end{equation}
The behavior depends critically on whether $\mu^2 \equiv 1\pmod\ell$.

\begin{theorem}
\label{thm:ps-ring}
Assume \textnormal{(H1)-(H4)}, $\ell \geq 5$, $p^2 \mid N$,
the inertial type at $p$ is principal series,
and $\ShaOne(G_{\Q,S}, \epsl \otimes \adz\rhobar) = 0$.
\begin{enumerate}[label=\textnormal{(\roman*)}]
  \item \textnormal{Cases A and B} ($p \not\equiv 1 \pmod{\ell}$):
    $d_2 = 0$ and $\Runiv \cong \Wk[[T_1, T_2, T_3]]$.
    The deformation problem is locally unobstructed; any obstruction
    comes from $\ShaOne$ alone.

  \item \textnormal{Case C, Subcase C1}
    ($p \equiv 1 \pmod{\ell}$ and $\mu^2 \equiv 1 \pmod{\ell}$):
    $d_2 = 1$ and
    $$
      \Runiv \cong \Wk[[T_1, T_2, T_3, T_4]]
      \big/ \langle T_4(T_1 -T_3) \rangle
    $$

  \item \textnormal{Case C, Subcase C2}
    ($p \equiv 1 \pmod{\ell}$ and $\mu^2 \not\equiv 1 \pmod{\ell}$):
    $d_2 = 1$ and the deformation problem is obstructed.
    Here the tame relation \eqref{eq:ps-relation-c} forces the local
    off-diagonal parameter $T_4$ to vanish (since $\Psi$ is a unit; see
    the proof), so no explicit relation among the local parameters
    survives. Nevertheless $H^0(G_p,\epsl\otimes\adz\rhobar)\neq 0$
    keeps $d_2 = 1$ by Theorem \ref{thm:gw}. The single relation
    in $\Runiv$ is therefore not visible in the local parameters at $p$;
    it is determined by the global Poitou-Tate pairing and cannot be
    written down from the local computation at $p$ alone.
\end{enumerate}
\end{theorem}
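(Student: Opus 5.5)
The plan follows the template of the proof of Theorem \ref{thm:twisted-steinberg-ring}. First I fix the number of relations. By Proposition \ref{prop:ps-h0}, $\dim_\kappa H^0(G_p,\epsl\otimes\adz\rhobar)$ is $0$ in Cases A and B and $1$ in Case C. With $\ShaOne=0$, and assuming the other primes of $S$ contribute nothing (Corollary \ref{cor:obstruction-criterion}), Theorem \ref{thm:gw} gives $d_2=0$ or $d_2=1$. The count of \S\ref{subsec:local-global} then gives $d_1=3+d_2$.

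For Cases A and B, $d_2=0$ means no relations at all, so $\Runiv\cong\Wk[[T_1,T_2,T_3]]$. Here $T_3$ is the determinant variable and $T_1,T_2$ are the trace-zero directions. As a local consistency check, I would note that the tame relation already kills the local extra parameters. The diagonal condition $(p-1)\alpha=(p-1)\beta=0$ with $p-1$ a unit forces $\alpha=\beta=0$. In $(\star)$ the constant term of the bracket is $\mu^2/p-1$. If it is a unit, $T_4=0$. If it is not a unit, I would use that $e_2$ carries a nontrivial inertia character, which forces $T_4=0$ through the upper-right entry of the $\sigma$-relation with $\zeta\neq\zeta^{-1}$. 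Either way no local relation survives, consistent with $d_2=0$.

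For Case C, I would argue from $(\star\star)$. In Subcase C1, $\mu^2\equiv1$ in $\kappa$. After rescaling the Frobenius variables, I would normalize $\mu^2$ to $1$ in $\Wk$: replace $1+T_1$ by $\tilde\mu^{2}(1+T_1)/\mu^2$, using a Teichmüller-type lift $\tilde\mu$. Then the bracket becomes $\frac{1+T_1}{1+T_3}-1$, and multiplying by the unit $1+T_3$ gives exactly $T_4(T_1-T_3)=0$, as in the twisted Steinberg derivation. The parameters $\alpha,\beta$ are freed by $p\equiv1$, and they must be absorbed. I would use the $e_1$-direction ($H^0$ generated by $e_1$) to identify the single obstruction with this relation, and then invoke \cite{Bos92,Boe99} to present $\Runiv$ as $\Wk[[T_1,\dots,T_4]]$ modulo one relation.

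In Subcase C2, the constant term $\mu^2-1$ of $\Psi:=\mu^2\frac{1+T_1}{1+T_3}-1$ is a unit, so $\Psi$ is a unit and $T_4=0$ locally. The statement then only asserts $d_2=1$, which follows from Theorem \ref{thm:gw}, together with the fact that the relation is not expressible in the local parameters. That last fact follows because the local framed ring becomes smooth in the surviving variables.

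The main obstacle is Subcase C1: justifying that the one abstract obstruction class coming from $e_1$ is realized by precisely $T_4(T_1-T_3)$. The difficulty is the mismatch between the $e_1$-direction and the off-diagonal parameter $T_4$, which lives in the $e_2$-eigenspace. I would first try to compute the cup product $H^1(G_p,\adz\rhobar)^{\otimes 2}\to H^2(G_p,\adz\rhobar)$. By local duality, $H^2(G_p,\adz\rhobar)$ is dual to the $e_1$-line, and the computation would pair the $T_4$-class with the diagonal Frobenius class $T_1-T_3$.
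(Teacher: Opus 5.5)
Your argument for (i) and (iii) is the paper's. For (ii) the paper does no more than you do: it reads $T_4(T_1-T_3)=0$ off $(\star\star)$, takes $d_2=1$ from Proposition~\ref{prop:ps-h0}(iii) and Theorem~\ref{thm:gw}, and cites \cite{Bos92, Boe99}. Absorbing the unit $\mu^2/p\equiv 1$ into $1+T_1$ is a legitimate tightening of the paper's silent replacement of $\mu^2/p$ by $1$; note that $p$ must be absorbed as well as $\mu^2$, since $p\neq 1$ in $\Wk$. The step you flag as the main obstacle is that the single obstruction class, dual to the $e_1$-line, is realized by $T_4(T_1-T_3)$. In the paper this step rests entirely on the citation. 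It is a genuine gap in your proposal, and neither your proposed repair nor the step ``$\alpha,\beta$ must be absorbed'' would work.

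\emph{Why the cup-product route fails.} Since $\bar\chi_1\bar\chi_2^{-1}$ is nontrivial on $I_p$, it is neither trivial nor $\epsl$ on $G_p$. Local duality and the local Euler characteristic formula then give $H^1(G_p,\kappa e_2)=H^2(G_p,\kappa e_2)=0$, and likewise for $e_3$. Concretely, $T_4$ is a coboundary direction: conjugating by $\bigl(\begin{smallmatrix}1&x\\0&1\end{smallmatrix}\bigr)$ with $x=T_4/\bigl(\zeta(1+\alpha)-\zeta^{-1}(1+\beta)\bigr)$, which is allowed because $\zeta\neq\zeta^{-1}$, removes it. This gauge argument is also the correct reason $T_4$ contributes nothing in your Case A/B check when $\mu^2\equiv p$. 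The upper-right entry of the $\sigma$-relation cannot do that job, since it is $(\star)$ itself. So there is no nonzero $T_4$-class to pair with $T_1-T_3$. Even formally, $[e_2,e_1]=-2e_2$ sends such a product into $H^2(G_p,\kappa e_2)=0$, not into the $e_1$-line. In Case C, $H^1(G_p,\adz\rhobar)=H^1(G_p,\kappa e_1)$ is two-dimensional, spanned by the unramified diagonal class ($T_1-T_3$) and the tame diagonal class ($\alpha-\beta$). The cup-product obstruction therefore vanishes identically, because $[e_1,e_1]=0$.

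\emph{Where the obstruction actually lives.} The obstruction dual to $e_1$ is the diagonal tame relation $(1+\alpha)^{p-1}=1$, which to first order is $(p-1)\alpha=0$ with $v_\ell(p-1)\geq 1$. It lives in exactly the parameters you meant to absorb. It is also the analogue of the relation $(p+1)\alpha=0$ that the paper keeps as $\ell^mT_4=0$ in Theorem~\ref{thm:sc-unram-ring}. Carried through, your computation points to a torsion-type relation in $\alpha$, not to $T_4(T_1-T_3)$.

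\emph{Consequence for (iii).} The same fact undermines your justification of (iii), and the paper's claim of ``no surviving relation''. In Case C the local ring is not smooth in the surviving variables for any $\mu$, since $(1+\alpha)^{p-1}=1$ survives whether or not $\Psi$ is a unit. Moreover, with $\ShaOne=0$, Poitou--Tate duality gives $\Sha^2(G_{\Q,S},\adz\rhobar)=0$. Hence the global obstruction class injects into $\bigoplus_{v\in S}H^2(G_v,\adz\rhobar)$, i.e.\ it is already detected locally. That is hard to reconcile with your claim that the relation is ``not expressible in the local parameters''.
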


\begin{proof}
\textit{Part (i).}
By Proposition \ref{prop:ps-h0}, $H^0 = 0$ in Cases A and B.
With $\ShaOne = 0$, Theorem \ref{thm:gw} gives $d_2 = 0$,
so $\Runiv \cong \Wk[[T_1,T_2,T_3]]$.

\textit{Part (ii).}
In Subcase C1, $\mu^2 = 1$ in $\kappa$,
so \eqref{eq:ps-relation-c} reduces to $T_4(T_1-T_3) = 0$
exactly. By Proposition \ref{prop:ps-h0}(iii) and
Theorem \ref{thm:gw}, $d_2 = 1$, giving the stated ring.
The presentation follows from \cite{Bos92, Boe99}.

\textit{Part (iii).}
In Subcase C2, $\mu^2 \neq 1$ in $\kappa$,
so \eqref{eq:ps-relation-c} reads $T_4 \cdot \Psi = 0$ where
$\Psi = \mu^2(1+T_1)(1+T_3)^{-1} -1$ has constant term
$\mu^2 -1 \neq 0$; hence $\Psi$ is a unit in $\Wk[[T_1,T_3]]$ and the
relation forces $T_4 = 0$. Thus the off-diagonal deformation
parameter $T_4$ (the upper-right entry of $\rho(\sigma)$) is killed in
the local framed deformation ring $R_p^\square$: every minimally
ramified local lift $\rho|_{G_p}$ is diagonal to first order, and the
local tame computation produces no surviving relation among the
deformation parameters.

This is a statement about $R_p^\square$, not directly about $\Runiv$.
The $H^0$-class responsible for the obstruction is $e_1$
(Proposition \ref{prop:ps-h0}(iii)), a \emph{diagonal} class, whereas
the killed parameter $T_4$ lies in the off-diagonal ($e_2$) direction;
the two are distinct. The localization map
$H^1(G_{\Q,S}, \adz\rhobar) \to H^1(G_p, \adz\rhobar)$ carries global
classes to local ones, and the condition $T_4 = 0$ in $R_p^\square$
says the image avoids the off-diagonal direction at $p$.
Nevertheless $H^0(G_p, \epsl \otimes \adz\rhobar) \neq 0$, so
Theorem \ref{thm:gw} gives $d_2 \geq 1$, and with $\ShaOne = 0$ in
fact $d_2 = 1$: the obstruction group $H^2(G_{\Q,S}, \adz\rhobar)$ is
one-dimensional.

The single relation in $\Runiv$ is detected by the local Tate duality
pairing at $p$, fed by the localization of the global obstruction
class. Writing $\mathrm{loc}_p$ for the restriction
$H^2(G_{\Q,S}, \adz\rhobar) \to H^2(G_p, \adz\rhobar)$, the relevant
pairing is
$$
  H^0(G_p, \epsl \otimes \adz\rhobar) \times
  H^2(G_p, \adz\rhobar)
  \xrightarrow{\cup}
  H^2(G_p, \epsl \otimes \adz\rhobar \otimes \adz\rhobar)
  \xrightarrow{\mathrm{tr}}
  H^2(G_p, \epsl) \cong \kappa,
$$
which is the perfect local Tate duality pairing (using
$\adz\rhobar^\vee \cong \adz\rhobar$ via the trace form and
$H^2(G_p,\epsl)\cong\kappa$). The obstruction is the value of this
pairing on $\mathrm{loc}_p$ of the generator of
$H^2(G_{\Q,S}, \adz\rhobar)$; whether it is nonzero--and hence the
precise relation in $\Runiv$--depends on the global class through
$\mathrm{loc}_p$, and is not recoverable from the local tame
computation at $p$ alone.
\end{proof}

\begin{remark}
\label{rem:mu2-dichotomy}
The dichotomy between Subcases C1 and C2 in Theorem \ref{thm:ps-ring}
reveals a fundamental asymmetry between local and global aspects
of the deformation problem in the principal series case.

In Subcase C1 ($\mu^2 = 1$), the local tame inertia computation at $p$ is sufficient to determine the relation in $\Runiv$ explicitly. The condition $\mu^2 = 1$ means the ratio $\bar{\chi}_1(F)/\bar{\chi}_2(F)$ is trivial in $\kappa$, so Frobenius conjugation does not scale the $T_4$ direction, and the tame relation directly produces the one-relation ring.

In Subcase C2 ($\mu^2 \neq 1$), the local framed deformation ring $R_p^\square$ forces $T_4 = 0$: the off-diagonal deformation parameter (the upper-right entry of $\rho(\sigma)$, lying in the $e_2$ direction) is killed, so every minimally ramified local lift is diagonal to first order. The obstruction class is nonetheless carried by the diagonal $e_1$-summand of $H^0(G_p, \epsl \otimes \adz\rhobar)$, which is distinct from the killed off-diagonal direction. The global Poitou-Tate pairing still guarantees an obstruction via $H^0(G_p, \epsl \otimes \adz\rhobar) \neq 0$, but the resulting relation in $\Runiv$ is not visible among the local parameters at $p$: it is detected by local Tate duality fed by the localization of the global obstruction class (see the proof of Theorem \ref{thm:ps-ring}(iii)). This is an explicit example where the local-to-global principle for deformation rings has a gap in the principal series setting. In other words, the local computation is necessary but not sufficient.

This stands in sharp contrast to the twisted Steinberg case of Section \ref{sec:twisted-steinberg}, where the local computation always determined the relation in $\Runiv$. The structural reason is that in the twisted Steinberg case $\rhobar|_{G_p}$ is a non-split extension (irreducible as a local
representation type) and the obstruction direction is the extension class, directly constrained by the tame inertia relation. In the principal series case $\rhobar|_{G_p}$ is reducible (a direct
sum), and the Frobenius eigenvalues of the two characters can kill the local deformation direction while leaving the global obstruction intact via the $H^0$ mechanism.

Finally, the condition $\mu^2 \equiv 1 \pmod{\ell}$ is an arithmetic condition on $E_f$: it says $\bar{\chi}_1(\Frob_p) \equiv \pm 1 \pmod{\ell}$, i.e.\ Frobenius at $p$ acts on the $\bar{\chi}_1$-eigenspace of $E_f[\ell]$ by $\pm 1$. This can be verified explicitly from the $a_p$-value and character data for any specific curve.
\end{remark}

\subsection{The global obstruction term and strict congruences}
\label{subsec:ps-global}

When $\ShaOne \neq 0$, the analysis of \cite[\S4]{Bin26} applies: by \cite[Prop. 4.4]{Bin26}, nontriviality of $\ShaOne$ is equivalent to $\ell$ being a strict congruence prime for $f$ under the Selmer hypotheses of \cite{Wes04, Wes05}. The formula for $\dim_\kappa \ShaOne$ from
Proposition \ref{prop:sha-formula} carries over without change.

\begin{corollary}
\label{cor:total-obstruction-ps}
Under the hypotheses of Proposition \ref{prop:sha-formula} and in Case C Subcase C1, the total obstruction dimension is
$$
  d_2 = 1 + \mathrm{ord}_\ell\left(\frac{m_{E_f} \cdot \prod_{q \mid N} c_q(E_f)}{\#E_f(\F_\ell)^2}
  \right)
$$
In Cases A and B the local contribution vanishes and $d_2$ is purely global. In Case C Subcase C2, the local obstruction is present ($d_2 \geq 1$) but the local relation is not explicitly determined;
the total $d_2$ equals the local dimension $1$ plus the $\ShaOne$ contribution.
\end{corollary}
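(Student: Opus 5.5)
The plan is to combine three results already established in the paper: Theorem \ref{thm:gw}, the local computation of Proposition \ref{prop:ps-h0}, and the formula for the global term in Proposition \ref{prop:sha-formula}. By Theorem \ref{thm:gw}, $d_2 = \dim_\kappa H^2(G_{\Q,S},\adz\rhobar)$ equals the sum of $\dim_\kappa \ShaOne$ and the local terms $\dim_\kappa H^0(G_q,\epsl\otimes\adz\rhobar)$ over $q\in S$. The terms at $\infty$ and at $\ell$ already vanish by (H1)--(H2), so only the finite primes $q\mid N$ remain. The principal point is that the formula is an exact equality, which the proof of Theorem \ref{thm:gw} records explicitly. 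Consequently no information about the shape of the relations in $\Runiv$ is needed: the count of $d_2$ is purely additive.

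\textbf{Step 1: the local term at $p$.} In Subcase C1 the local term at $p$ equals $1$ by Proposition \ref{prop:ps-h0}(iii). That result is insensitive to $\mu$, so the same value $1$ holds in Subcase C2. In Cases A and B the same proposition gives $0$ at $p$.

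\textbf{Step 2: the global term.} I substitute Proposition \ref{prop:sha-formula}, which holds regardless of the inertial type at $p$, for $\dim_\kappa\ShaOne$. This produces the $\mathrm{ord}_\ell$ expression in the statement.

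\textbf{Step 3: the Case C2 claim.} Here the argument only needs to observe that the identity from Steps 1 and 2 is a statement about dimensions. It therefore holds whether or not the relation is locally visible, which is exactly the point of Theorem \ref{thm:ps-ring}(iii).

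\textbf{Main obstacle.} The difficulty lies in the other primes $q\mid N$, $q\neq p$. To get exactly ``$1+\mathrm{ord}_\ell(\cdots)$'' one must know that $H^0(G_q,\epsl\otimes\adz\rhobar)=0$ for each such $q$. I would handle this in the same way as Corollary \ref{cor:obstruction-criterion}: either impose it as a standing assumption, in the spirit of (H4), or argue type by type using the computations of this paper and of \cite{Bin26}. If several primes have local obstructions, the correct statement instead adds their local dimensions, following Corollary \ref{cor:multiple-primes}. The phrase ``purely global'' in Cases A and B is read the same way: the contribution at $p$ is $0$, and under this same assumption on the other primes, $d_2 = \dim\ShaOne$.
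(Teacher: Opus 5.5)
Your proposal is correct and follows the paper's route: the corollary is stated without a separate proof, as the immediate combination of Theorem~\ref{thm:gw}, Proposition~\ref{prop:ps-h0} (local term $1$ in Case C regardless of $\mu$, and $0$ in Cases A and B), and Proposition~\ref{prop:sha-formula}. You also note that the exact value $1+\mathrm{ord}_\ell(\cdots)$ requires $H^0(G_q,\epsl\otimes\adz\rhobar)=0$ at the other primes $q\in S$; the paper leaves this hypothesis implicit (as in Corollary~\ref{cor:obstruction-criterion}), so stating it explicitly, or summing the local terms via Corollary~\ref{cor:multiple-primes}, is the right fix.
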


\subsection{The \texorpdfstring{$p = 3$}{p=3} cases}
\label{subsec:ps-p3}

\begin{proposition}
\label{prop:ps-p3}
Theorem \ref{thm:ps-ring} and Proposition \ref{prop:ps-h0} hold for
$p = 3$ with inertial types $\tau_{\mathrm{ps},3}(1,2,3)$ and
$\tau_{\mathrm{ps},3}(1,2,3) \otimes \varepsilon_3$, with the same
case structure and deformation ring presentations.
\end{proposition}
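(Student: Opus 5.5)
The plan is to check that each input used for $p \geq 5$ in Proposition \ref{prop:ps-h0} and Theorem \ref{thm:ps-ring} still holds for $p=3$. The $p \geq 5$ argument used four things.
\begin{enumerate}[label=\textnormal{(\alph*)}]
\item $\rhobar|_{G_p} \simeq \bar\chi_1 \oplus \bar\chi_2$ with $\bar\chi_1\bar\chi_2 = \epsl$.
\item $\bar\chi_1^{\pm 2}|_{I_p}$ is nontrivial modulo $\ell$.
\item $e_1$ carries trivial inertia and Frobenius eigenvalue $p$ in $\epsl \otimes \adz\rhobar$.
\item The tame relation $F\sigma F^{-1} = \sigma^p$ gives the diagonal constraints $(p-1)\alpha=(p-1)\beta=0$ and the relation \eqref{eq:ps-relation}.
\end{enumerate}
For $\tau_{\mathrm{ps},3}(1,2,3)$, \S\ref{subsec:ps-ramified} already records from \cite[Table 3]{DFV22} that both characters have conductor exponent $2$ and order $3$ on $I_3$. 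For $\ell \geq 5$ we have $\ell \nmid 3$, so the residual order stays $3$. Then $\bar\chi_1^2|_{I_3}$ also has order $3$ and is nontrivial, which gives (b). Points (a) and (c) are formal. The $H^0$ computation in the proof of Proposition \ref{prop:ps-h0} then goes through word for word: $H^0$ is $0$ in Cases A and B and is spanned by $e_1$ in Case C. Case B is in fact vacuous for $p=3$, since $3 \equiv -1 \pmod \ell$ would force $\ell \mid 4$.

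For the $\varepsilon_3$-twist I will use that twisting by a character $\psi$ does not change $\ad\rhobar$: the eigencharacters $\bar\chi_1\bar\chi_2^{-1}$ are unchanged when $\chi_i \mapsto \chi_i\psi$. The determinant becomes $\epsl\bar\psi^2 = \epsl$ because $\psi$ is quadratic, so $\mu$ is replaced by $\mu\,\varepsilon_3(F)$ and $\mu^2$ is unchanged. The $H^0$ result and the C1/C2 dichotomy are therefore identical for the twist.

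The main obstacle is (d). The characters now have conductor exponent $2$, so they are \emph{wildly} ramified on $I_3$. The one-generator tame presentation $\langle F,\sigma \rangle$ used in \S\ref{subsec:ps-ring} is not directly available. My approach is as follows.
\begin{enumerate}[label=\textnormal{(\arabic*)}]
\item Restricted to $P_3$, the residual representation $\rhobar|_{P_3}$ is a sum of characters of $3$-power order.
\item Since $\ell \neq 3$, $\ell \nmid \#\rhobar(P_3)$, so deformations of $\rhobar|_{P_3}$ are rigid: $H^1(P_3,\adz\rhobar)=0$.
\item Every lift restricted to $P_3$ is therefore conjugate to the Teichmüller-type lift.
\item We may thus work with $G_3/P_3$, the tame quotient, acting on the $P_3$-isotypic decomposition. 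This quotient again has the form $\langle F,\sigma \mid F\sigma F^{-1}=\sigma^3\rangle$.
\item Minimality (Definition \ref{def:min-ram-ps}) keeps the deformation diagonal on $P_3$ with fixed characters.
\end{enumerate}
I must then check that the off-diagonal entry $T_4$ of $\rho(\sigma)$ is compatible with the $P_3$-isotypic splitting. This needs care because the two $P_3$-characters $\chi_1|_{P_3}$ and $\chi_2|_{P_3}$ are distinct. If they are distinct, the off-diagonal direction $e_2$ is simply killed (nontrivial $P_3$-action). In that case the only surviving relation comes from Case C: $p=3\equiv 1 \pmod \ell$ is impossible for $\ell \geq 5$, since $\ell \mid 2$ is required.

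This last observation is the cleanest route, and I will lead with it. For $p=3$ and $\ell \geq 5$ we have $3 \not\equiv \pm 1 \pmod \ell$, so only Case A occurs. Proposition \ref{prop:ps-h0}(i) then gives $H^0=0$, and Theorem \ref{thm:ps-ring}(i) gives $\Runiv \cong \Wk[[T_1,T_2,T_3]]$ whenever $\ShaOne=0$. The only case needing real proof is Case A. There the conclusion rests solely on the $H^0$ computation and Theorem \ref{thm:gw}, which needs only (b), and this avoids the wild presentation entirely. Cases B and C, including the C1/C2 relations, hold vacuously. I would remark that if $\ell=3$ were allowed, the wild-inertia analysis above would be needed; see Remark \ref{rem:l3-ps}.
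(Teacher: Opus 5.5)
Your proof is correct, but it takes a different route from the paper's.

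\textbf{The paper's route.} It argues by direct transfer. It says the eigencharacters of $\adz\rhobar|_{G_3}$ are unchanged and that $\bar\chi_1^2|_{I_3}$ is nontrivial. It then says that ``the tame inertia relation produces the same relation \eqref{eq:ps-relation} with $\mu=\bar\chi_1(\Frob_3)$''. Only afterwards, in Remark~\ref{rem:ps-p3-caseAB}, does it observe that Case C cannot occur at $p=3$.

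\textbf{Your route.} You lead instead with the fact that $3\not\equiv\pm1\pmod\ell$ for every $\ell\ge5$. This makes Cases B and C vacuous, together with the C1/C2 relations. The statement then reduces to $H^0(G_3,\epsl\otimes\adz\rhobar)=0$ plus Theorem~\ref{thm:gw}. That vanishing uses only two facts:
\begin{enumerate}
\item $\bar\chi_1\bar\chi_2^{-1}|_{I_3}=\bar\chi_1^2|_{I_3}$ has order $3$, which is prime to $\ell$;
\item the Frobenius eigenvalue $3$ on the $e_1$-line satisfies $3\not\equiv1\pmod\ell$.
\end{enumerate}
As you note, both facts are insensitive to the $\varepsilon_3$-twist.

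\textbf{What each route buys.} Your route never uses the local tame computation, and that computation is genuinely problematic at $p=3$. An order-$3$ (or order-$6$) character of $I_3$ is wildly ramified, and $e\nmid p-1=2$, so $\zeta^3\neq\zeta$. Consequently the matrices of \S\ref{subsec:ps-ring}, namely diagonal $\rhobar(F)$ and $\rhobar(\sigma)=\mathrm{diag}(\zeta,\zeta^{-1})$, do not satisfy $F\sigma F^{-1}=\sigma^3$ even residually. The paper's ``same relation'' claim is therefore not justified as written, and your sidestep is the sound argument.

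The paper's uniform approach would only pay off if redone with the wild inertia $P_3$ taken into account, as in your steps (1)--(5). There the off-diagonal direction dies because $\bar\chi_1/\bar\chi_2$ is nontrivial on $P_3$. For this statement that extra work is unnecessary.
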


\begin{proof}
The adjoint $\adz\rhobar|_{G_3}$ has the same eigencharacter structure
as in the $p \geq 5$ case: the $\varepsilon_3$ twist cancels in
$\bar{\chi}_1/\bar{\chi}_2$, the inertia order is $e = 3$ with
$\bar{\chi}_1^2|_{I_3}$ nontrivial for $\ell \geq 5$,
and the Frobenius matrix is diagonal.
The tame inertia relation produces the same relation \eqref{eq:ps-relation}
with $\mu = \bar{\chi}_1(\Frob_3)$. All conclusions follow.
\end{proof}

\begin{remark}[The case $p = 3$]
\label{rem:ps-p3-caseAB}
For $p = 3$, the condition $p \equiv 1 \pmod{\ell}$ would require
$\ell \mid (p-1) = 2$, which is impossible for $\ell \geq 5$.
Hence for $p = 3$ the principal series case always falls in
Case A or B, and by Theorem \ref{thm:ps-ring}(i) the deformation
problem is locally unobstructed (with $\ShaOne = 0$); for instance
the prime pair $(p,\ell) = (3,7)$ gives Case A
($3 \not\equiv \pm 1 \pmod 7$).
Case C, and with it the C1/C2 dichotomy of
Theorem \ref{thm:ps-ring}, therefore occurs only for $p \geq 5$ with
$\ell \mid (p-1)$.
\end{remark}

\section{The Non-Exceptional Supercuspidal Cases}
\label{sec:supercuspidal}

\subsection{Introduction and structural remarks}
\label{subsec:sc-intro}

Throughout this section, let $p \geq 3$ be a prime with $p^2 \mid N$,
and suppose $E_f/\Q$ has additive, potentially good reduction at $p$
with inertial type of non-exceptional supercuspidal kind.
By \cite{DFV22}, the relevant types are listed in
Table \ref{tab:types}; we separate them into two families
according to whether the inducing quadratic extension $K/\Q_p$
is unramified or ramified.

\medskip
\noindent\textbf{On Case A2.}
In the supercuspidal setting, one might expect a new subcase of
Case A to arise.
For a general supercuspidal representation induced from a quadratic
extension $K/\Q_p$, the condition for
$H^0(G_p, \epsl \otimes \adz\rhobar) \neq 0$ involves
$p^4 \equiv 1 \pmod{\ell}$ (cf. \cite[Prop. 3.5]{Hat16}),
which can occur with $p^2 \not\equiv 1 \pmod{\ell}$
(i.e., within Case A as defined in \S\ref{subsec:notation}).
We call this potential subcase \textbf{Case A2}:
$p^4 \equiv 1$ but $p^2 \not\equiv 1 \pmod{\ell}$,
i.e., $\mathrm{ord}_\ell(p) = 4$.

However, for the specific DFV elliptic curve types, Case A2 is vacuous.
The reason is that for all DFV supercuspidal types, the inducing
character $\bar{\chi}|_{I_K}$ has order $e \geq 3$ and is nontrivial,
so the ratio $\bar{\psi} = \bar{\chi}/\bar{\chi}^s$ satisfies
$\bar{\psi}|_{I_K} \neq 1$ unconditionally for $\ell \geq 5$
(verified explicitly in \S\S\ref{subsec:sc-unram-h0}
and \ref{subsec:sc-ram-setup}).
This kills the contribution of the
$\mathrm{Ind}^{G_p}_{G_K}\bar{\psi}$ summand to $H^0$
regardless of the Frobenius eigenvalue,
preventing Case A2 from arising.
Hatley's $p^4 \equiv 1$ condition applies to more general
supercuspidal representations where $\bar{\chi}|_{I_K}$ may be
unramified; it is not realized for the DFV types.
We therefore work with Cases A, B, C throughout this section
without further refinement.

\medskip
\noindent\textbf{A new phenomenon: $\ell$-power torsion.}
The supercuspidal case introduces a qualitatively new feature
not present in Sections \ref{sec:twisted-steinberg}
or \ref{sec:principal-series}: when $H^0 \neq 0$, the relation
in $\Runiv$ involves $\ell$-power torsion rather than a polynomial
in the deformation parameters.
This arises because the generator of $H^0$ corresponds to twisting
by the quadratic character $\varepsilon_K$, and the obstruction is
measured by the $\ell$-adic valuation of $p+1$ in $\Wk$.
We explain this in detail in \S\ref{subsec:sc-unram-ring}.

We begin with two structural results valid across all types.

\begin{proposition}
\label{prop:sc-j-independence}
Let $\ell \geq 5$ and $p = 3$ with $p^2 \mid N$.
The three inertial types $\tau_{\mathrm{sc},3}(-3,4,6)_j$
for $j = 0, 1, 2$ give identical results for all computations
in this section:
$$
  H^0(G_3, \epsl \otimes \adz\rhobar) = 0
$$
for all $j \in \{0,1,2\}$, in Cases A, B, and C.
No $j$-dependent correction arises.
\end{proposition}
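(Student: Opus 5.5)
The plan is to compute $H^0(G_3,\epsl\otimes\adz\rhobar)$ uniformly in $j$ from the induced structure. Since the type is supercuspidal, write $\rhobar|_{G_3}\cong \mathrm{Ind}_{G_K}^{G_3}\bar\chi$ with $K=\Q_3(\sqrt{-3})$ ramified quadratic. The standard decomposition for induced representations is
$$\adz\rhobar|_{G_3}\cong \varepsilon_K\oplus \mathrm{Ind}_{G_K}^{G_3}\bar\psi,\qquad \bar\psi=\bar\chi/\bar\chi^s,$$
where $s$ generates $\mathrm{Gal}(K/\Q_3)$. Twisting by $\epsl$, which is unramified at $3$ since $\ell\neq 3$, gives
$$\epsl\otimes\adz\rhobar|_{G_3}\cong \epsl\varepsilon_K\oplus \mathrm{Ind}_{G_K}^{G_3}(\bar\psi\,\epsl|_{G_K}).$$
The index $j$ only changes the wild part of $\chi$, that is, its values on higher ramification groups and a twist by a character of $K^\times$ of $3$-power conductor. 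It does not change $K$, the restriction $\varepsilon_K|_{I_3}$, or the order $r=6$ of $\chi|_{I_K}$. The whole argument will therefore use only these $j$-invariant data, which gives the $j$-independence at once.

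The first step is the one-dimensional summand. Because $K/\Q_3$ is ramified, $\varepsilon_K|_{I_3}$ is the nontrivial quadratic character. Since $\epsl|_{I_3}=1$, inertia acts on $\epsl\varepsilon_K$ by $-1\neq 1$ in $\kappa$ (here $\ell\geq 5$). So this summand contributes nothing to $H^0$ in any of Cases A, B, C, whatever the Frobenius eigenvalue is.

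The second step is the induced summand. By Frobenius reciprocity,
$$H^0\bigl(G_3,\mathrm{Ind}(\bar\psi\epsl)\bigr)=H^0\bigl(G_K,\bar\psi\epsl\bigr).$$
This vanishes as soon as $\bar\psi|_{I_K}\neq 1$, because $\epsl|_{I_K}=1$. So it suffices to show that $\bar\chi|_{I_K}\neq\bar\chi^s|_{I_K}$.

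This is the main obstacle. I would argue as follows. If $\bar\psi|_{I_K}=1$, then $\bar\chi|_{I_K}$ is $s$-invariant. On the tame quotient $s$ acts through the norm-one condition, and $\chi|_{I_K}$ has order $6=2\cdot 3$. The order-$3$ part is wild, with $3$-power order, and $\ell\neq 3$, so it survives reduction mod $\ell$ injectively. The order-$2$ part also survives, since $\ell\geq 5$. So $\bar\chi|_{I_K}$ has order exactly $6$, and $s$-invariance of it would make the lift $\chi|_{I_K}$ $s$-invariant as well. Then $\chi$ would extend, up to an unramified twist, to $G_3$, and $\mathrm{Ind}\,\chi$ would be reducible on inertia, which contradicts supercuspidality of the type $\tau_{\mathrm{sc},3}(-3,4,6)_j$ in \cite{DFV22}. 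Concretely, I would check this against the explicit description of $\chi_j$ in \cite[Table 3]{DFV22}: on $\mathcal{O}_K^\times$ it is a character of conductor exponent $4$ that is nontrivial on $1+\pi^3\mathcal{O}_K$ but has order $3$ there, and one verifies that $\chi_j\circ s\neq\chi_j$ on $1+\pi^3\mathcal{O}_K$ independently of $j$. The point of the injectivity argument is that whatever distinguishes $\chi$ from $\chi^s$ already lives in prime-to-$\ell$ inertia, so it persists mod $\ell$.

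Combining the two steps gives $H^0(G_3,\epsl\otimes\adz\rhobar)=0$ for all $j$ and in all three Cases. Nothing depended on $3\bmod\ell$, and in particular Case C cannot occur for $p=3$ anyway.
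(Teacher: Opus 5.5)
\textbf{There is a genuine gap in your main argument, though your fallback points to a correct fix.}

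Your reduction is the same as the paper's. You split $\epsl\otimes\adz\rhobar|_{G_3}\cong\epsl\varepsilon_K\oplus\mathrm{Ind}_{G_K}^{G_3}(\bar\psi\,\epsl|_{G_K})$. You kill the first summand because $\varepsilon_K|_{I_3}$ is nontrivial. You then use Frobenius reciprocity to reduce to showing $\bar\psi|_{I_K}\neq 1$. The gap is in how you prove this last point.

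The chain ``$\chi|_{I_K}$ is $s$-invariant $\Rightarrow$ $\chi$ extends to $G_3$ up to an unramified twist $\Rightarrow$ $\mathrm{Ind}\,\chi$ is reducible on inertia $\Rightarrow$ contradiction with supercuspidality'' fails in two places.
\begin{enumerate}
\item Unramified characters $\nu$ of $K^\times$ satisfy $\nu^s=\nu$, so twisting never changes $\chi/\chi^s$. $s$-invariance on $I_K$ only says that $\chi/\chi^s$ is unramified, with value $\chi(\pi/s(\pi))=\chi(-1)$ at $\pi=\sqrt{-3}$. Here $\det\mathrm{Ind}\,\chi=\varepsilon_K\cdot\chi|_{\Q_3^\times}$ is unramified, so $\chi(-1)=\varepsilon_K(-1)=-1$, since $-1$ is not a norm from $\Q_3(\sqrt{-3})$. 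So in exactly the hypothetical situation you want to exclude, $\chi/\chi^s$ would be the nontrivial unramified quadratic character, and $\mathrm{Ind}\,\chi$ would still be irreducible.
\item Reducibility on inertia is no contradiction with supercuspidality. Every unramified-$K$ type in this paper has $\rhobar(\sigma)$ diagonal.
\end{enumerate}
So supercuspidality alone cannot give $\bar\psi|_{I_K}\neq 1$; you need finer information about $\chi$.

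Your fallback, checking $\chi_j\circ s\neq\chi_j$ on $U_K^3:=1+\pi^3\mathcal{O}_K$, is the right place to look. However, it is the entire content of the proposition, and you only assert it. The missing step is that $\chi^s=\chi^{-1}$ on $U_K^3$; this is the instance of \cite[Lem.~2.3.7]{DFV22} that the paper invokes. To see it, take $x=1+\pi^3y$ with $y=a+b\pi$ and $a,b\in\mathbb{Z}_3$. Then $x\,s(x)=1+18b+27\,y\,s(y)\in 1+9\mathbb{Z}_3\subset U_K^4$, and $\chi$ is trivial there because its conductor exponent is $4$.

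Now suppose $\chi^s=\chi$ on $U_K^3$. Then $\chi^2=1$ there. Since $U_K^3/U_K^4\cong\F_3$, this forces $\chi|_{U_K^3}=1$, contradicting conductor exponent $4$. Hence $\psi=\chi^2$ is nontrivial of order $3$ on $U_K^3$, which corresponds to a subgroup of $I_K$. This survives reduction because $\ell\neq 3$.

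With this step inserted your proof is complete. It differs from the paper's only in the last check. The paper evaluates $\bar\chi_j^2$ at the two units $-\xi_6$ and $\xi_6-3$ from \cite[Table 3]{DFV22} and excludes simultaneous triviality by a congruence on $j$. Your repaired version uses only the conductor exponent of $\chi$, so $j$-independence is automatic rather than checked case by case.
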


\begin{proof}
See \S\ref{subsec:sc-j-indep}.
\end{proof}

\begin{corollary}
\label{cor:p3-caseb}
For $p = 3$ and $\ell \geq 5$, Case B ($p \equiv -1 \pmod{\ell}$)
never occurs. Hence all $p = 3$ supercuspidal types have
$H^0(G_3, \epsl \otimes \adz\rhobar) = 0$ for $\ell \geq 5$
regardless of Case, and the deformation problem is locally
unobstructed at $3$.
\end{corollary}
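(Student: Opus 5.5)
Corollary \ref{cor:p3-caseb} makes two claims. The first is elementary congruence arithmetic. The second is a vanishing statement for the local invariants, which I would prove by decomposing $\adz$ of an induced representation and checking each summand.

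\textit{Step 1: only Case A occurs for $p = 3$.} Case B means $3 \equiv -1 \pmod{\ell}$, i.e.\ $\ell \mid 4$, which is impossible for $\ell \geq 5$. Similarly, Case C means $\ell \mid 2$, also impossible, as already noted in Remark \ref{rem:ps-p3-caseAB}. So for $p = 3$ and $\ell \geq 5$ we are always in Case A. In fact $3^2 \equiv 1 \pmod{\ell}$ would force $\ell \mid 8$, so $p^2 \not\equiv 1 \pmod{\ell}$ holds as well.

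\textit{Step 2: decompose the adjoint.} For every $p=3$ supercuspidal type in Table \ref{tab:types}, write $\rhobar|_{G_3} \cong \mathrm{Ind}_{G_K}^{G_3}\bar\chi$ with $K/\Q_3$ quadratic. The standard decomposition is
$$\adz\rhobar|_{G_3} \cong \varepsilon_K \oplus \mathrm{Ind}_{G_K}^{G_3}\bar\psi, \qquad \bar\psi = \bar\chi/\bar\chi^s .$$
Twisting by $\epsl$ gives $\epsl\varepsilon_K \oplus \mathrm{Ind}_{G_K}^{G_3}(\bar\psi\cdot\epsl|_{G_K})$, and I would treat the two summands separately.

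\textit{The character $\epsl\varepsilon_K$.} It has a nonzero invariant only if $\varepsilon_K\epsl$ is trivial on $G_3$.
\begin{itemize}
\item If $K$ is ramified (the types $\tau_{\mathrm{sc},3}(\pm 3,2,6)$ and $\tau_{\mathrm{sc},3}(-3,4,6)_j$), then $\varepsilon_K$ is ramified while $\epsl$ is unramified at $3$, so the product is never trivial.
\item If $K$ is unramified (the types $\tau_{\mathrm{sc},3}(-1,1,4)$ and $\tau_{\mathrm{sc},3}(-1,2,3)$, possibly twisted by $\varepsilon_3$), then both characters are unramified. Since $\varepsilon_K(\Frob_3) = -1$ and $\epsl(\Frob_3) = 3$, the product is trivial iff $3 \equiv -1 \pmod{\ell}$. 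That is Case B, excluded by Step 1.
\end{itemize}
This is exactly why the corollary is stated as a consequence of Case B being impossible.

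\textit{The induced summand.} By Frobenius reciprocity (Shapiro),
$$H^0\bigl(G_3, \mathrm{Ind}_{G_K}^{G_3}(\bar\psi\,\epsl)\bigr) = H^0(G_K, \bar\psi\,\epsl),$$
and this vanishes as soon as $\bar\psi|_{I_K} \neq 1$, because $\epsl|_{I_K}=1$. For unramified $K$, tame inertia gives $\bar\chi^s|_{I_K} = \bar\chi^{3}|_{I_K}$, so $\bar\psi|_{I_K} = \bar\chi^{-2}|_{I_K}$. With $\bar\chi|_{I_K}$ of order $4$ or $3$, and the order preserved mod $\ell$ since $\ell \nmid 12$, the character $\bar\psi|_{I_K}$ has order $2$ or $3$, hence is nontrivial. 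The $\varepsilon_3$-twists do not change $\bar\psi$, since a twist by a character of $G_3$ cancels in $\bar\chi/\bar\chi^s$.

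\textit{Main obstacle.} The hard part is the ramified types, where $\bar\chi^s$ is not simply a power of $\bar\chi$ on $I_K$, so $\bar\psi|_{I_K} \neq 1$ must be checked against the explicit character data of \cite[Table 3]{DFV22}. I would first try to argue abstractly: if $\bar\psi|_{I_K} = 1$, then $\bar\chi|_{I_K}$ would be $\mathrm{Gal}(K/\Q_3)$-invariant and would extend to $I_3$. That would make the inertial type reducible, or dihedral of lower conductor, contradicting supercuspidality together with the recorded conductor exponents $3$ and $5$, at least after reducing mod $\ell \ge 5$, where orders dividing $12$ are preserved. Failing that, I would appeal to the explicit verification promised in \S\ref{subsec:sc-ram-setup}, and for the three types $\tau_{\mathrm{sc},3}(-3,4,6)_j$ to Proposition \ref{prop:sc-j-independence} directly.

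\textit{Conclusion.} Combining the two summands gives $H^0(G_3,\epsl\otimes\adz\rhobar)=0$ for every $p=3$ supercuspidal type. So $3$ contributes nothing to the Greenberg–Wiles sum in Theorem \ref{thm:gw}, i.e.\ the deformation problem is locally unobstructed at $3$.
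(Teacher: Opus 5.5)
Your strategy is the paper's. Its proof of this corollary notes that $\ell\mid 4$ is impossible, then cites Proposition~\ref{prop:sc-ram-h0} for ramified $K$ and Proposition~\ref{prop:sc-unram-h0} for unramified $K$ (nonvanishing only in Case B). You are re-deriving those two propositions from the same decomposition $\epsl\varepsilon_K\oplus\mathrm{Ind}_{G_K}^{G_3}(\bar\psi\,\epsl)$. Your treatment of the summand $\epsl\varepsilon_K$, the Frobenius reciprocity step, and the type $\tau_{\mathrm{sc},3}(-1,1,4)$ are correct. Your extra observation that Case C is also impossible for $p=3$ is right and harmless.

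\textbf{The step that fails.} For $\tau_{\mathrm{sc},3}(-1,2,3)$ you invoke the tame relation to get $\bar\chi^s|_{I_K}=\bar\chi^{3}|_{I_K}$. This also affects its $\varepsilon_3$-twist, which you correctly reduce to it.
\begin{itemize}
\item Here $\bar\chi|_{I_K}$ has order $3$ and conductor exponent $2$, so it is wildly ramified. The tame quotient of $I_K$ is $\prod_{q\neq 3}\mathbb{Z}_q(1)$, which has no quotient of order $3$. So the relation $F\sigma F^{-1}=\sigma^3$ says nothing about this character.
\item Your identity is in fact self-contradictory. It would make $\bar\chi^s|_{I_K}$ trivial, although $\bar\chi^s|_{I_K}$ is a conjugate of a character of order $3$.
\item Citing Proposition~\ref{prop:sc-unram-h0}, as the paper does, would not close this. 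Its proof uses the same tame set-up with $e\mid(p+1)$, which excludes this type because $3\nmid 4$.
\end{itemize}

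\textbf{How to repair it.} The conclusion is still true; the right input is the determinant rather than tame inertia. Since $\rhobar|_{G_K}\cong\bar\chi\oplus\bar\chi^s$ and $\det\rhobar|_{I_3}=\epsl|_{I_3}=1$, you get
\[
\bar\chi^s|_{I_K}=\bar\chi^{-1}|_{I_K}
\]
for every quadratic $K$, ramified or not. Hence $\bar\psi|_{I_K}=\bar\chi^2|_{I_K}$. This has order $r/\gcd(2,r)\in\{2,3\}$ for $r\in\{3,4,6\}$, so it is nontrivial because $\ell\nmid 12$.

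This one line also handles the ramified types (where $r=6$) and all three values of $j$ at once, so your ``main obstacle'' disappears. Your abstract sketch for the ramified case, as stated, does not yet reach a contradiction. A reducible restriction to inertia is compatible with supercuspidality; it always occurs when $K$ is unramified.
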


\begin{proof}
Case B requires $\ell \mid (p+1) = 4$, impossible for $\ell \geq 5$.
For ramified types, $H^0 = 0$ by Proposition \ref{prop:sc-ram-h0}.
For unramified types, $H^0 \neq 0$ only in Case B
(Proposition \ref{prop:sc-unram-h0}), which never occurs here.
\end{proof}

\subsection{Unramified \texorpdfstring{$K$}{K}: setup and adjoint decomposition}
\label{subsec:sc-unram-setup}

We treat first the case where $K/\Q_p$ is the unramified quadratic
extension. This covers:
\begin{itemize}
  \item All types at $p \geq 5$:
    $\tau_{\mathrm{sc},p}(u,1,e)$ with $e \mid (p+1)$,
    $e \in \{3,4,6\}$, where $K = \Q_p(\sqrt{u})$
    with $u$ a non-square unit.
  \item At $p = 3$: $\tau_{\mathrm{sc},3}(-1,1,4)$ and
    $\tau_{\mathrm{sc},3}(-1,2,3)$ (and its $\varepsilon_3$-twist),
    where $K = \Q_3(\sqrt{-1})$ is the unramified quadratic
    extension of $\Q_3$.
\end{itemize}
For unramified $K/\Q_p$:
$\varepsilon_K$ is the unramified quadratic character with
$\varepsilon_K|_{I_p} = 1$ and $\varepsilon_K(\Frob_p) = -1$;
$I_K = I_p$; and $\Frob_K = \Frob_p^2$.

The residual representation is
$\rhobar|_{G_p} \cong \mathrm{Ind}^{G_p}_{G_K}\bar{\chi}$
with $\bar{\chi} \neq \bar{\chi}^s$,
where $\bar{\chi}^s(g) = \bar{\chi}(\Frob_p^{-1}g\Frob_p)$.
In the companion matrix basis:
$$
  \rhobar(\sigma) = \begin{pmatrix} \zeta & 0 \\ 0 & \zeta^{-1} \end{pmatrix},
  \qquad
  \rhobar(F) = \begin{pmatrix} 0 & -p \\ 1 & 0 \end{pmatrix},
$$
where $\zeta = \bar{\chi}(\sigma) \in \kappa^\times$ is a
primitive $e$-th root of unity with $e \mid (p+1)$,
and $\mathrm{tr}(\rhobar(F)) = 0$
(the level-raising eigenvalue condition, cf. \cite{DT94}).
Since $e \mid (p+1)$, $\zeta^p = \zeta^{-1}$, and one verifies
$\rhobar(F)\rhobar(\sigma)\rhobar(F)^{-1} = \rhobar(\sigma^p)$.

The adjoint decomposes as:
$$
  \adz\rhobar|_{G_p} \cong \varepsilon_K \oplus \mathrm{Ind}^{G_p}_{G_K}\bar{\psi},
$$
where $\bar{\psi} = \bar{\chi}/\bar{\chi}^s : G_K \to \kappa^\times$.
Twisting by $\epsl$:
$$
  \epsl \otimes \adz\rhobar|_{G_p}
  \cong (\epsl \cdot \varepsilon_K)
  \oplus (\epsl \otimes \mathrm{Ind}^{G_p}_{G_K}\bar{\psi}).
$$

\subsection{Computation of \texorpdfstring{$H^0$}{H0} for unramified
\texorpdfstring{$K$}{K}}
\label{subsec:sc-unram-h0}

\medskip
\noindent\textbf{First summand $\epsl \cdot \varepsilon_K$.}
Since $\varepsilon_K|_{I_p} = 1$ and $\epsl|_{I_p} = 1$,
inertia acts trivially.
Frobenius acts by $\epsl(\Frob_p)\cdot\varepsilon_K(\Frob_p)
= p\cdot(-1) = -p$.
This equals $1 \pmod{\ell}$ iff $p \equiv -1 \pmod{\ell}$,
i.e., Case B. So:
$$
  H^0(G_p, \epsl \cdot \varepsilon_K)
  = \begin{cases} \kappa & \text{Case B,} \\ 0 & \text{Cases A and C.} \end{cases}
$$

\medskip
\noindent\textbf{Second summand
$\epsl \otimes \mathrm{Ind}^{G_p}_{G_K}\bar{\psi}$.}
By Frobenius reciprocity:
$$
  H^0(G_p, \epsl \otimes \mathrm{Ind}^{G_p}_{G_K}\bar{\psi}) = H^0(G_K, \epsl|_{G_K} \cdot \bar{\psi}).
$$
This is nonzero iff $\bar{\psi} = \epsl^{-1}|_{G_K}$.
We check the inertia condition:
$\bar{\psi}|_{I_K} = (\bar{\chi}/\bar{\chi}^s)|_{I_K}$.
Since $I_K = I_p$ and
$\bar{\chi}^s(\sigma) = \bar{\chi}(\Frob_p^{-1}\sigma\Frob_p) = \bar{\chi}(\sigma^p) = \zeta^p = \zeta^{-1}$, we have:
$$
  \bar{\psi}(\sigma) = \frac{\bar{\chi}(\sigma)}{\bar{\chi}^s(\sigma)} = \frac{\zeta}{\zeta^{-1}} = \zeta^2
$$
For $e \geq 3$ and $\ell \geq 5$, $\zeta^2$ has order $e/\gcd(2,e) \geq 1$ and is nontrivial in $\kappa^\times$. Since $\epsl^{-1}|_{I_p} = 1 \neq \zeta^2$, the inertia condition $\bar{\psi}|_{I_K} = \epsl^{-1}|_{I_K}$ fails, giving $H^0(G_K, \epsl|_{G_K} \cdot \bar{\psi}) = 0$ in all Cases A, B, C.

\begin{proposition}
\label{prop:sc-unram-h0}
Let $p \geq 3$ and $\ell \geq 5$ be distinct primes with
$p^2 \mid N$, $\ell \nmid N$, and suppose the inertial type at $p$
is non-exceptional supercuspidal with unramified $K/\Q_p$. Then:
\begin{enumerate}[label=\textnormal{(\roman*)}]
  \item \textnormal{Case A}: $\dim_\kappa H^0(G_p, \epsl \otimes \adz\rhobar) = 0$.
  \item \textnormal{Case B}: $\dim_\kappa H^0(G_p, \epsl \otimes \adz\rhobar) = 1$,
    generated by the class corresponding to $\varepsilon_K$
    in the $(\epsl \cdot \varepsilon_K)$-summand.
  \item \textnormal{Case C}: $\dim_\kappa H^0(G_p, \epsl \otimes \adz\rhobar) = 0$.
\end{enumerate}
\end{proposition}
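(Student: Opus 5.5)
The plan is to assemble Proposition~\ref{prop:sc-unram-h0} from the two summand computations just carried out, using the $G_p$-stable decomposition
$$\epsl \otimes \adz\rhobar|_{G_p} \cong (\epsl\cdot\varepsilon_K) \oplus (\epsl \otimes \mathrm{Ind}^{G_p}_{G_K}\bar\psi).$$
Since $H^0(G_p,-)$ is additive on direct sums, the dimension of $H^0(G_p,\epsl\otimes\adz\rhobar)$ is the sum of the dimensions of the invariants of the two summands.

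First I will justify the decomposition itself. Conjugation by $\rhobar(\sigma)=\mathrm{diag}(\zeta,\zeta^{-1})$ fixes $e_1$ and scales $e_2,e_3$ by $\zeta^{2},\zeta^{-2}$. Conjugation by $\rhobar(F)$ sends $e_1\mapsto -e_1$ and swaps the lines spanned by $e_2$ and $e_3$. It follows that $\kappa e_1$ is a line with trivial inertia action and Frobenius acting by $-1$, i.e. it is $\varepsilon_K$. The span of $e_2,e_3$ is induced from the character $\bar\psi$ of $G_K$ on $\kappa e_2$. Because $\zeta^2\neq\zeta^{-2}$, this 2-dimensional piece is a genuine induction rather than a sum of characters. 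The inequality $\zeta^2\neq\zeta^{-2}$ requires $\zeta^4\ne1$, which fails for $e=4$, so $e=4$ must be handled separately as noted below.

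Next I compute the invariants of the first summand. It has trivial inertia action and Frobenius eigenvalue $-p$, so it contributes $\kappa$ exactly when $p\equiv -1 \pmod{\ell}$. In Case C this would force $\ell\mid 2$, so the summand contributes only in Case B.

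For the second summand I apply Frobenius reciprocity (Shapiro's lemma) to reduce to $H^0(G_K,\epsl\bar\psi)$, and then test only against $\sigma\in I_K=I_p$. Since $\epsl(\sigma)=1$, the action of $\sigma$ is by $\zeta^2$, which is $\neq1$ because $e\ge3$ and $\ell\nmid e$ for $\ell\ge5$.

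The main point needing care is $e=4$. There $\zeta^2=-1$, which is still nontrivial, so the inertia argument goes through. However $\zeta^2=\zeta^{-2}$, so the Mackey argument for irreducibility of the induction does not apply. For this case I would bypass induction altogether and argue directly: inertia acts on both $e_2$ and $e_3$ by $-1\neq 1$, so any inertia-invariant vector in their span is $0$. The same direct argument works for all $e$, so I would present it uniformly.

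Adding the two contributions gives dimension $0,1,0$ in Cases A, B, C. In Case B the invariant line is spanned by $e_1$, which is the $\varepsilon_K$-class.
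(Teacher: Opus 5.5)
Your proposal is correct and follows the paper's argument. It uses the same splitting $(\epsl\cdot\varepsilon_K)\oplus(\epsl\otimes\mathrm{Ind}^{G_p}_{G_K}\bar\psi)$; the first summand contributes exactly when $-p\equiv 1 \pmod{\ell}$, i.e.\ in Case B, and the second is killed because inertia acts through $\zeta^{\pm 2}\neq 1$. Your separate treatment of $e=4$ is unnecessary, since for $e=4$ the span of $e_2,e_3$ is still isomorphic to $\mathrm{Ind}^{G_p}_{G_K}\bar\psi$, merely reducible, and Shapiro's lemma $H^0(G_p,\mathrm{Ind}^{G_p}_{G_K}M)=H^0(G_K,M)$ needs no irreducibility hypothesis; your direct eigenvalue argument is nonetheless equally valid.
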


\begin{proof}
The second summand contributes zero in all cases.
For the first summand, the Frobenius eigenvalue $-p \equiv 1\pmod\ell$
iff $p \equiv -1 \pmod\ell$, i.e., Case B.
\end{proof}

\begin{remark}
Case C gives $H^0 = 0$ here, in contrast to the principal series
case where Case C gave $H^0 = 1$.
The structural reason is that the supercuspidal adjoint carries
$\varepsilon_K$ (not the trivial character) in its unramified summand,
and $\varepsilon_K(\Frob_p) = -1$ introduces a sign preventing
Frobenius-invariance in Case C.
\end{remark}

\begin{remark}[$\ell = 3$]
\label{rem:l3-sc}
Although (H2) requires $\ell \geq 5$, the local computation of
Proposition \ref{prop:sc-unram-h0} holds as stated for $\ell = 3$ as
well.
The inertia computation $\bar{\psi}(\sigma) = \zeta^2 \neq 1$ remains
valid since for $e = 4$, $\zeta^2$ has order $2$ (nontrivial mod $3$);
for $e = 6$, $\zeta^2$ has order $3$ (also nontrivial mod $3$ since
$3 \nmid 1$).
If (H2) is relaxed to allow $\ell = 3$, the subtlety of
Remark \ref{rem:l3-gw} affects only the GW formula, not the $H^0$
computation.
\end{remark}

\subsection{Unramified \texorpdfstring{$K$}{K}: explicit matrix computation
and deformation rings}
\label{subsec:sc-unram-ring}

\begin{definition}
\label{def:min-ram-sc}
A deformation $\rho : G_{\Q,S} \to \GL_2(R)$ of $\rhobar$ is \emph{minimally ramified at $p$} in the supercuspidal case if $\rho|_{G_p} \cong \mathrm{Ind}^{G_p}_{G_K}\chi$ for a lift $\chi$ of $\bar{\chi}$ with the same conductor exponent as $\bar{\chi}$.
\end{definition}

In the companion matrix basis, a minimally ramified lift takes the form:
$$
  \rho(F) = \begin{pmatrix} 0 & -p(1+T_1) \\ (1+T_1)^{-1} & T_3 \end{pmatrix},
  \qquad
  \rho(\sigma) = \begin{pmatrix} \zeta(1+\alpha) & 0 \\
  0 & \zeta^{-1}(1+\alpha)^{-1} \end{pmatrix}
$$
where $T_1, T_3, \alpha \in \mR$. The determinant condition $\det(\rho) = \varepsilon_\ell$ forces the $(2,1)$-entry of $\rho(F)$ to be $(1+T_1)^{-1}$ and the lower diagonal of $\rho(\sigma)$ to be $\zeta^{-1}(1+\alpha)^{-1}$.

\medskip
\noindent\textit{Tame inertia relation.}
We impose $\rho(F)\rho(\sigma)\rho(F)^{-1} = \rho(\sigma)^p$. Since the relations we derive are linear in the deformation parameters, the tangent space computation modulo $\mR^2$ determines the exact relation in $\Runiv$. Using $\zeta^p = \zeta^{-1}$ and $\rho(F)^{-1} \equiv
\frac{1}{p}\begin{pmatrix} T_3 & p(1+T_1) \\ -(1-T_1) & 0\end{pmatrix}
\pmod{\mR^2}$, a direct matrix computation gives:

\medskip
\noindent\textit{Diagonal entries} modulo $\mR^2$:
$$
  (1,1): \quad \zeta^{-1}(1-\alpha) = \zeta^{-1}(1+p\alpha)
  \implies (p+1)\alpha = 0
$$

\noindent\textit{Off-diagonal $(2,1)$ entry} modulo $\mR^2$:
$$
  \frac{T_3}{p}(\zeta -\zeta^{-1}) = 0
$$
Since $\zeta \neq \zeta^{-1}$ (as $e \geq 3$) and $p \in \Wk^\times$:
$$
  T_3 = 0
$$
The Frobenius trace parameter is killed: any minimally ramified lift must satisfy $\mathrm{tr}(\rho(F)) = 0$.

In Case B, $p \equiv -1 \pmod{\ell}$, so $p+1 = \ell^m u$ with $u \in \Wk^\times$ and $m = v_\ell(p+1) \geq 1$. The relation $(p+1)\alpha = 0$ becomes $\ell^m u \alpha = 0$, i.e., $\ell^m(u\alpha) = 0$. Setting $T_4 = u\alpha$ gives the relation $\ell^m T_4 = 0$.

In Cases A and C, $p \not\equiv -1 \pmod{\ell}$, so $(p+1)$ is a unit in $\Wk$ and $\alpha = 0$ is forced, with no new relation.

\begin{theorem}
\label{thm:sc-unram-ring}
Assume \textnormal{(H1)-(H4)}, $\ell \geq 5$, $p^2 \mid N$, the inertial type at $p$ is non exceptional supercuspidal with unramified $K/\Q_p$, and
$\ShaOne(G_{\Q,S}, \epsl \otimes \adz\rhobar) = 0$.
Let $m = v_\ell(p+1)$.
\begin{enumerate}[label=\textnormal{(\roman*)}]
  \item \textnormal{Cases A and C}: $d_2 = 0$ and$\Runiv \cong \Wk[[T_1, T_2, T_3]]$. The deformation problem is locally unobstructed.

  \item \textnormal{Case B} ($p \equiv -1 \pmod{\ell}$): $d_2 = 1$ and
    $$
      \Runiv \cong \Wk[[T_1, T_2, T_3, T_4]] \big/ \langle \ell^m T_4 \rangle
    $$
    where $m = v_\ell(p+1) \geq 1$. The deformation ring has $\ell^m$-torsion in the $T_4$ direction.
\end{enumerate}
\end{theorem}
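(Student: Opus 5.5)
The proof follows the same two-step template as Theorems \ref{thm:twisted-steinberg-ring} and \ref{thm:ps-ring}. First we fix the numerical invariants $d_1,d_2$ from the local $H^0$ computation and the Greenberg-Wiles formula. Then we identify the relations from the tame inertia computation carried out just before the statement.

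\emph{Step 1: counting.} By Proposition \ref{prop:sc-unram-h0}, $H^0(G_p,\epsl\otimes\adz\rhobar)$ vanishes in Cases A and C. In Case B it is one-dimensional, spanned by the $\varepsilon_K$-class. For every other prime $q\in S\setminus\{p\}$ the local term vanishes by the standing hypotheses; at $\ell$ and $\infty$ this is the argument in the proof of Theorem \ref{thm:gw}. Since $\ShaOne=0$, Corollary \ref{cor:obstruction-criterion} and Proposition \ref{prop:local-global} then give $d_2=0$ in Cases A and C and $d_2=1$ in Case B. The Euler characteristic count of \S\ref{subsec:local-global} gives $d_1=3+d_2$. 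So in Cases A and C the ring is $\Wk[[T_1,T_2,T_3]]$ with no relations, which proves (i). We only need to remark that the local tame computation is consistent with this: $(p+1)$ is a unit there, so $\alpha=0$ is forced and no parameter survives beyond the three free ones.

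\emph{Step 2: Case B relation.} Here $\Runiv$ is a quotient of $\Wk[[T_1,\dots,T_4]]$ by a single relation $r$. We must identify $r$ with $\ell^mT_4$. The natural candidate for $T_4$ is the extra parameter $\alpha$, rescaled to $T_4=u\alpha$ as in the computation above. In tangent-space terms, the diagonal inertia deformation $\mathrm{diag}(1+\alpha,(1+\alpha)^{-1})$ is exactly the direction that pairs under local Tate duality with the $H^0$ generator. That generator is the $\varepsilon_K$-summand, i.e.\ $e_1$ in the companion basis, on which $F$ acts by $-1$. To verify that $\alpha$ is the obstructed direction, we would check that its image in $H^1(G_p,\adz\rhobar)$ is not in the image of global unobstructed classes. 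The tame relation then gives $(p+1)\alpha=0$, i.e.\ $\ell^m T_4=0$ after absorbing the unit $u$.

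To upgrade this from a tangent-level relation to the exact relation in $\Runiv$, we use the observation above: the relation is linear in the parameters, so no higher-order corrections appear. More carefully, we would compute $\rho(\sigma)^p$ exactly, using the fact that $\rho(\sigma)$ is diagonal, and conjugate by the exact $\rho(F)$ with $T_3=0$. The $(1,1)$ entry should give $(1+\alpha)^{-1}=(1+\alpha)^p$, i.e.\ $(1+\alpha)^{p+1}=1$. We then need to rewrite this as $\ell^m$ times a unit times a parameter $T_4$.

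\emph{Main obstacle.} The hardest step is exactly this reparametrization. The exact equation $(1+\alpha)^{p+1}=1$ is not literally $\ell^m\alpha=0$: expanding, it reads $(p+1)\alpha+\binom{p+1}{2}\alpha^2+\cdots=0$. The higher binomial coefficients are not all divisible by $\ell^m$; for instance $\binom{p+1}{\ell}$ need not be. So one must find a change of variables $T_4=\alpha\cdot v(\alpha)$ with $v$ a unit. I would first try to write $(1+\alpha)^{p+1}-1=\ell^m\cdot(\text{unit})\cdot\alpha$, but this may fail. If it does, I would instead present the relation as $\bigl((1+T_4)^{p+1}-1\bigr)$ and argue that it generates the same ideal as $\ell^mT_4$ only after imposing the global condition that $\alpha$ is a $\Wk$-valued deformation parameter. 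That condition is justified by the Poitou-Tate pairing argument together with \cite{Bos92, Boe99}, exactly as in Theorem \ref{thm:twisted-steinberg-ring}. Finally, the determinant variable and $T_1,T_2$ do not enter the relation, since $T_3=0$ and $F$-conjugation only swaps the diagonal entries.
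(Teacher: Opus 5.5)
Your counting step and your identification of $T_4$ with the rescaled inertia parameter $\alpha$ follow the paper's proof. The paper gets $d_2$ from Proposition \ref{prop:sc-unram-h0} and Theorem \ref{thm:gw}, reads $(p+1)\alpha=0$ off the tame relation computed modulo $\mR^2$, and asserts this is the exact relation because ``the relations are linear.'' You are right that this assertion is the weak point. With $T_3=0$, the antidiagonal $\rho(F)$ simply swaps the diagonal entries of $\rho(\sigma)$, so the exact relation is $(1+\alpha)^{p+1}=1$. In fact, since $\ell\in\mR$, the element $(p+1)\alpha=\ell^m u\alpha$ already lies in $\mR^2$, so the computation modulo $\mR^2$ says nothing about the relation.

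The genuine gap is that the step you call the main obstacle cannot be overcome, neither by your fallback nor by any other change of variables. Write $p+1=\ell^m u'$ with $\ell\nmid u'$, and set $y=(1+\alpha)^{\ell^m}$. Then $(1+\alpha)^{p+1}-1=(y-1)(1+y+\cdots+y^{u'-1})$. The second factor is $\equiv u'$ modulo the maximal ideal, so it is a unit. Hence the relation generates the same ideal as $(1+\alpha)^{\ell^m}-1$, which is a distinguished polynomial of degree $\ell^m$ in $\alpha$. Weierstrass division then gives
\[
\Wk[[T_1,T_2,T_3,\alpha]]\big/\bigl((1+\alpha)^{p+1}-1\bigr)\;\cong\;\Wk[[T_1,T_2,T_3]]\otimes_{\Wk}\Wk[\mathbb{Z}/\ell^m\mathbb{Z}],
\]
a free $\Wk[[T_1,T_2,T_3]]$-module of rank $\ell^m$, and in particular without $\ell$-torsion. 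In $\Wk[[T_1,\dots,T_4]]/\langle\ell^mT_4\rangle$, by contrast, $T_4\neq 0$ and $\ell^mT_4=0$. These rings are not isomorphic, so no rescaling $T_4=\alpha v(\alpha)$ with $v$ a unit can identify the two ideals.

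Your other proposed remedies fail for related reasons. Requiring $\alpha$ to be a $\Wk$-valued parameter is a condition on points, not on the ideal in the universal ring, so it cannot identify the ideals either. The obstruction-theoretic input you invoke controls only the number of relations, not their shape.

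What your computation actually produces in Case B is the relation $(1+T_4)^{\ell^m}-1$. The resulting ring is $\Wk$-flat, and its generic fibre contains the points $T_4=\zeta-1$ with $\zeta^{\ell^m}=1$; these correspond to twisting $\chi|_{I_K}$ by characters of $\ell$-power order. So neither the presentation with $\ell^mT_4$ nor the $\ell^m$-torsion claim can be reached along this route.

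A smaller point: (H1)--(H4) do not force the local terms at the other primes of $S$ to vanish. Like the paper, you are tacitly assuming that $p$ is the only prime with a nonzero local term, as in Corollary \ref{cor:obstruction-criterion}.
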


\begin{proof}
\textit{Part (i).}
By Proposition \ref{prop:sc-unram-h0}, $H^0 = 0$ in Cases A and C. Theorem \ref{thm:gw} with $\ShaOne = 0$ gives $d_2 = 0$.

\textit{Part (ii).}
By Proposition \ref{prop:sc-unram-h0}(ii), $H^0 = 1$ in Case B, giving $d_2 = 1$ and $d_1 = 4$. The tame inertia computation above shows $T_3 = 0$ and $(p+1)\alpha = 0$. Writing $p+1 = \ell^m u$ and $T_4 = u\alpha$ gives $\ell^m T_4 = 0$. The four generators $T_1, T_2, T_3, T_4$ arise from $d_1 = 4$, and the single relation is $\ell^m T_4 = 0$. The presentation follows from \cite{Bos92, Boe99}.
\end{proof}

\begin{remark}
\label{rem:sc-torsion}
The relation $\ell^m T_4 = 0$ is qualitatively different from all relations in Sections \ref{sec:twisted-steinberg} and \ref{sec:principal-series}. There, relations were products $T_i \cdot f(T_j) = 0$ for some power series $f$. Here, the relation involves only $\ell$ and a single parameter $T_4$, with no dependence on the other deformation parameters.

Geometrically, $\Runiv$ has genuine $\ell^m$-torsion: the locus $\{T_4 \neq 0\}$ is empty in
$\mathrm{Spec}\Runiv[1/\ell]$, while $T_4$ is nontrivial in $\mathrm{Spec}\Runiv/\ell^m$. The deformation in the $T_4$ direction is visible only modulo $\ell^m$, not over $\Wk$ itself.

The arithmetic source is due to the fact that $T_4$ parametrizes deformations in the $\varepsilon_K$-direction, i.e., twists of $\rhobar$ by the quadratic character $\varepsilon_K$. Since $\varepsilon_K^2 = 1$, the order of this deformation is exactly $\ell^m = v_\ell(p+1)$, measuring how deeply $\ell$ divides the condition $p + 1 \equiv 0 \pmod{\ell^m}$. For the generic Case B case $m = 1$ (i.e., $\ell \| (p+1)$), the relation is simply $\ell T_4 = 0$.
\end{remark}

\begin{remark}[$\ell = 3$, Case B]
If (H2) is relaxed to allow $\ell = 3$, then Theorem \ref{thm:sc-unram-ring}(ii) holds with the caveat of Remark \ref{rem:l3-gw}: the relation $\ell^m T_4 = 0$ is valid, but $\Runiv$ should then be understood as a quotient of the stated ring. (For $p = 3$ this case is in any event vacuous by
Corollary \ref{cor:p3-caseb}; the remark applies to $p \geq 5$ with $\ell = 3 \mid (p+1)$.)
\end{remark}

\subsection{Ramified \texorpdfstring{$K$}{K}: setup and
\texorpdfstring{$H^0$}{H0} computation}
\label{subsec:sc-ram-setup}

We now treat the types where $K/\Q_p$ is totally ramified. This covers exclusively $p = 3$:
\begin{itemize}
  \item $\tau_{\mathrm{sc},3}(\pm 3, 2, 6)$:
    $K = \Q_3(\sqrt{\pm 3})$, totally ramified over $\Q_3$.
  \item $\tau_{\mathrm{sc},3}(-3, 4, 6)_j$ for $j = 0,1,2$:
    $K = \Q_3(\sqrt{-3})$, totally ramified.
\end{itemize}
For ramified $K/\Q_3$: $\varepsilon_K|_{I_3}$ is nontrivial quadratic; $\varepsilon_K(\Frob_3) = 1$;
$I_K \subsetneq I_3$; and $\Frob_K = \Frob_3$ (totally ramified, not unramified).

The adjoint decomposes as in \S\ref{subsec:sc-unram-setup}, with $\bar{\psi} = \bar{\chi}/\bar{\chi}^s$.

\medskip
\noindent\textbf{First summand $\epsl \cdot \varepsilon_K$.}
Since $K$ is totally ramified, $\varepsilon_K|_{I_3}$ is nontrivial, so inertia acts nontrivially
on this summand. Hence $H^0(G_3, \epsl \cdot \varepsilon_K) = 0$ in all Cases A, B, C.

\medskip
\noindent\textbf{Second summand
$\epsl \otimes \mathrm{Ind}^{G_3}_{G_K}\bar{\psi}$.} By Frobenius reciprocity,
$H^0(G_3, \epsl \otimes \mathrm{Ind}^{G_3}_{G_K}\bar{\psi}) = H^0(G_K, \epsl|_{G_K} \cdot \bar{\psi})$, which requires $\bar{\psi}|_{I_K} = 1$. By \cite[Lem. 2.3.7]{DFV22}, $\bar{\chi}^s|_{I_K} = \bar{\chi}^{-1}|_{I_K}$, so $\bar{\psi}|_{I_K} = \bar{\chi}^2|_{I_K}$.

For $\tau_{\mathrm{sc},3}(\pm 3, 2, 6)$: $\bar{\chi}|_{I_K}$ has order $6$, so $\bar{\chi}^2|_{I_K}$ has order $3$, nontrivial for $\ell \geq 5$.

For $\tau_{\mathrm{sc},3}(-3, 4, 6)_j$: the explicit computation in \S\ref{subsec:sc-j-indep} shows $\bar{\chi}_j^2|_{I_K} \neq 1$ for all $j$.

In both cases $H^0(G_K, \epsl|_{G_K} \cdot \bar{\psi}) = 0$.

\begin{proposition}
\label{prop:sc-ram-h0}
Let $\ell \geq 5$ and $p = 3$ with $p^2 \mid N$. Suppose the inertial type at $3$ is non-exceptional supercuspidal with ramified $K/\Q_3$. Then $\dim_\kappa H^0(G_3, \epsl \otimes \adz\rhobar) = 0$ in Cases A, B, and C.
\end{proposition}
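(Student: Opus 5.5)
The plan is to compute $H^0(G_3, \epsl \otimes \adz\rhobar)$ summand by summand, using the decomposition $\epsl \otimes \adz\rhobar|_{G_3} \cong (\epsl\cdot\varepsilon_K) \oplus (\epsl \otimes \mathrm{Ind}^{G_3}_{G_K}\bar\psi)$ from \S\ref{subsec:sc-unram-setup}. First I will check that this decomposition is still valid when $K/\Q_3$ is ramified. The argument is formal. For $\rhobar|_{G_3} \cong \mathrm{Ind}^{G_3}_{G_K}\bar\chi$ with $\bar\chi \neq \bar\chi^s$, we have $\ad\rhobar \cong \mathrm{Ind}\bar\chi \otimes \mathrm{Ind}\bar\chi^{-1} \cong \mathrm{Ind}(\mathbf{1}) \oplus \mathrm{Ind}(\bar\chi/\bar\chi^s)$ by Mackey. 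Moreover $\mathrm{Ind}(\mathbf{1}) = \mathbf{1} \oplus \varepsilon_K$. Since $\ell \geq 5$ is odd, the trivial summand splits off as the scalars, which leaves $\adz\rhobar \cong \varepsilon_K \oplus \mathrm{Ind}\bar\psi$. None of this uses that $K$ is unramified, and twisting by $\epsl$ commutes with the decomposition.

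For the first summand, $\varepsilon_K|_{I_3}$ is the nontrivial quadratic character because $K/\Q_3$ is ramified. Meanwhile $\epsl|_{I_3} = 1$ since $3 \neq \ell$. So $I_3$ acts on the line through $-1$ on some element, and since $-1 \neq 1$ in characteristic $\ell \geq 5$, the invariants vanish. This holds whatever $3$ is mod $\ell$, so the Case A/B/C distinction plays no role.

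For the second summand, Frobenius reciprocity (Shapiro for $H^0$) reduces the question to $H^0(G_K, \epsl|_{G_K}\bar\psi)$. This group is nonzero only if $\bar\psi|_{I_K} = \epsl^{-1}|_{I_K} = 1$. By \cite[Lem. 2.3.7]{DFV22}, $\bar\psi|_{I_K} = \bar\chi^2|_{I_K}$. So it suffices to show that $\bar\chi^2|_{I_K} \neq 1$ for each type.

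For $\tau_{\mathrm{sc},3}(\pm3,2,6)$ the character $\chi|_{I_K}$ has order $6$. Reduction modulo a prime above $\ell$ is injective on roots of unity of order prime to $\ell$, and $\gcd(6,\ell) = 1$, so $\bar\chi|_{I_K}$ still has order $6$ and its square has order $3 \neq 1$. For $\tau_{\mathrm{sc},3}(-3,4,6)_j$ I will appeal to the explicit description in \S\ref{subsec:sc-j-indep}. There the image of inertia under $\chi_j$ again has order $6$ (the semistability defect is $e=12$ for the induced representation), so the same reduction argument gives $\bar\chi_j^2|_{I_K} \neq 1$ for every $j$.

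The main obstacle is this last input, namely pinning down the exact order of $\chi|_{I_K}$ from the DFV data and confirming that it is not $\leq 2$. This matters because the definition of $\bar\psi$ requires a power of $\bar\chi$, not just $\bar\chi$ itself, to be nontrivial. I would resolve it by reading off $r=6$ from the label $(d,f,r)$ in \cite[\S2.5]{DFV22} and using $\ell \nmid 6$. Combining the two summands then gives $H^0 = 0$ in all three Cases.
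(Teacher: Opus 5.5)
Your proposal is correct and follows the paper's proof: the same splitting $\varepsilon_K \oplus \mathrm{Ind}\,\bar\psi$, the ramified $\varepsilon_K|_{I_3}$ killing the first summand, and Frobenius reciprocity together with \cite[Lem. 2.3.7]{DFV22} reducing the second summand to $\bar\chi^2|_{I_K}\neq 1$, which holds because $\chi|_{I_K}$ has order $6$ and $\ell\nmid 6$. There are two small differences. You justify the adjoint decomposition for ramified $K$ via Mackey, which the paper simply asserts. For $\tau_{\mathrm{sc},3}(-3,4,6)_j$, the paper's supporting computation in \S\ref{subsec:sc-j-indep} checks the explicit values $\chi_j(-\xi_6)=\zeta_3^j$ and $\chi_j(\xi_6-3)=-\zeta_3^{j-1}$ rather than quoting $r=6$; these values generate $\mu_6$ for each $j$, so the two checks agree.
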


\begin{proof}
The first summand vanishes because $\varepsilon_K|_{I_3}$ is nontrivial. The second summand vanishes because $\bar{\psi}|_{I_K} = \bar{\chi}^2|_{I_K}$ is nontrivial of order $3$ for $\ell \geq 5$.
\end{proof}

\subsection{Ramified \texorpdfstring{$K$}{K}: deformation rings}
\label{subsec:sc-ram-ring}

\begin{theorem}
\label{thm:sc-ram-ring}
Assume \textnormal{(H1)-(H4)}, $\ell \geq 5$, $p = 3$, $p^2 \mid N$, the inertial type at $3$ is non-exceptional supercuspidal with ramified $K/\Q_3$, and
$\ShaOne(G_{\Q,S}, \epsl \otimes \adz\rhobar) = 0$.
Then $d_2 = 0$ and
$$
  \Runiv \cong \Wk[[T_1, T_2, T_3]]
$$
in Cases A, B, and C. The deformation problem is locally unobstructed at $3$.
\end{theorem}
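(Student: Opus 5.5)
The proof will follow the template of Theorem \ref{thm:sc-unram-ring}(i): it is a direct combination of the local vanishing result with the Greenberg-Wiles formula. Proposition \ref{prop:sc-ram-h0} gives $H^0(G_3, \epsl \otimes \adz\rhobar) = 0$ for every ramified supercuspidal type at $3$ and in all of Cases A, B, C. This covers $\tau_{\mathrm{sc},3}(\pm 3,2,6)$ and $\tau_{\mathrm{sc},3}(-3,4,6)_j$; for the latter we use the $j$-independence of Proposition \ref{prop:sc-j-independence}. By Corollary \ref{cor:p3-caseb}, Case B is in fact vacuous for $p=3$, but no separate argument is needed since the vanishing holds uniformly.

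Next we apply Theorem \ref{thm:gw}. Under (H1)-(H4) the archimedean term and the term at $\ell$ vanish. We assume $\ShaOne = 0$, and, as in Corollary \ref{cor:obstruction-criterion}, that the local terms at the other primes of $S$ vanish. Then $d_2 = \dim_\kappa H^2(G_{\Q,S}, \adz\rhobar) = 0$. The splitting $\ad\rhobar \cong \adz\rhobar \oplus \kappa$ from \S\ref{subsec:local-global} shows the determinant direction adds no obstruction. The Euler characteristic count $d_1 = 3 + d_2$ then gives $d_1 = 3$.

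Finally, by Mazur's presentation $\Runiv$ is a quotient of $\Wk[[T_1,T_2,T_3]]$ by at most $d_2 = 0$ relations. Hence $\Runiv \cong \Wk[[T_1,T_2,T_3]]$, with $T_3$ the determinant variable.

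As a consistency check, a tame-inertia computation analogous to \S\ref{subsec:sc-unram-ring} should not yield any torsion relation. Here $\varepsilon_K(\Frob_3)=1$, and the first summand is killed by inertia rather than by a Frobenius eigenvalue, so no factor like $p+1$ appears. This check is heuristic and not logically required.

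The only point needing care is the hypothesis at the other primes of $S$. The phrase ``locally unobstructed at $3$'' is exactly the vanishing of the local term at $3$. The global statement $d_2=0$ additionally uses the vanishing of the local terms at the remaining primes dividing $N$, as in Corollary \ref{cor:multiple-primes}. I would state this dependence explicitly in the proof.
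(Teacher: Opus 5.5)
Your proposal is correct and matches the paper's proof, which simply combines Proposition \ref{prop:sc-ram-h0} with Theorem \ref{thm:gw} under $\ShaOne = 0$ to get $d_2 = 0$. You are also right that concluding $d_2 = 0$ requires the local terms $H^0(G_q, \epsl \otimes \adz\rhobar)$ to vanish at the other primes $q \in S$ as well; the statement does not include this hypothesis, and the paper's two-line proof uses it without saying so.
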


\begin{proof}
Proposition \ref{prop:sc-ram-h0} gives $H^0 = 0$ in all cases. Theorem \ref{thm:gw} with $\ShaOne = 0$ gives $d_2 = 0$.
\end{proof}

\begin{remark}
The local unobstructedness for all ramified $K$ types at $p = 3$ is striking: the more complex inertial structure forces both summands of $\epsl \otimes \adz\rhobar$ to have nontrivial inertia action, killing $H^0$ unconditionally regardless of the arithmetic Case or the value of $\ell$.
\end{remark}

\subsection{The \texorpdfstring{$j$}{j}-independence verification}
\label{subsec:sc-j-indep}

\begin{proof}[Proof of Proposition \ref{prop:sc-j-independence}]
From \cite[Table 3]{DFV22}, for $j \in \{0,1,2\}$ the character
$\chi_{(-3,4,6),j}$ satisfies:
$$
  \chi_j(-\xi_6) = \zeta_3^j, \qquad \chi_j(\xi_6 -3) = -\zeta_3^{j-1}
$$
By \cite[Lem. 2.3.7]{DFV22},
$\bar{\chi}_j^s|_{I_K} = \bar{\chi}_j^{-1}|_{I_K}$, so $\bar{\psi}_j|_{I_K} = \bar{\chi}_j^2|_{I_K}$.
Computing:
$$
  \bar{\chi}_j^2(-\xi_6) = \zeta_3^{2j}, \qquad \bar{\chi}_j^2(\xi_6 -3) = \zeta_3^{2-2j}
$$
For $\bar{\chi}_j^2|_{I_K} = 1$ we need $3 \mid 2j$ and $3 \mid 2(1-j)$. Since $\gcd(2,3) = 1$, these give $3 \mid j$ and $3 \mid (1-j)$ simultaneously, requiring $j \equiv 0$ and $j \equiv 1 \pmod{3}$, a contradiction. Hence $\bar{\chi}_j^2|_{I_K} \neq 1$ for all $j \in \{0,1,2\}$,
the second summand contributes zero, and the first summand also vanishes by the ramified $K$ argument. This holds in all Cases and for all $\ell \geq 5$, identically for all three values of $j$.
\end{proof}

\begin{remark}
The $j$-independence arises because $\bar{\chi}_j^2|_{I_K}$ varies with $j$ in principle,
but $\bar{\chi}_j^2|_{I_K} \neq 1$ holds for ALL $j$ by the same simultaneous congruence obstruction.
There is no $j$-dependent threshold: none of the three types ever contributes to $H^0$ for any $\ell \geq 5$.
\end{remark}

\subsection{Global obstruction and strict congruences}
\label{subsec:sc-global}

When $\ShaOne \neq 0$, the analysis of \cite[\S4]{Bin26} applies. The formula for $\dim_\kappa \ShaOne$ from Proposition \ref{prop:sha-formula} carries over without change.

\begin{corollary}
\label{cor:total-obstruction-sc}
Under the hypotheses of Proposition \ref{prop:sha-formula} and for the unramified $K$ types in Case B, the total obstruction dimension is
$$
  d_2 = 1 + \mathrm{ord}_\ell\left(\frac{m_{E_f} \cdot \prod_{q \mid N} c_q(E_f)}{\#E_f(\F_\ell)^2}
  \right)
$$
For all ramified $K$ types and for unramified $K$ types in Cases A and C, $d_2 = \mathrm{ord}_\ell(\cdots)$ is purely global.
\end{corollary}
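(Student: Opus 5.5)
The statement is Corollary \ref{cor:total-obstruction-sc}. The plan is to combine the Greenberg-Wiles formula (Theorem \ref{thm:gw}) with the local computations of this section and the global formula of Proposition \ref{prop:sha-formula}. Theorem \ref{thm:gw} gives
$$d_2 = \dim_\kappa \ShaOne(G_{\Q,S}, \epsl \otimes \adz\rhobar) + \sum_{q \in S} \dim_\kappa H^0(G_q, \epsl \otimes \adz\rhobar),$$
where the terms at $\ell$ and $\infty$ already vanish by (H1)--(H2). So the work splits into two parts: evaluating the global term via Proposition \ref{prop:sha-formula}, which holds regardless of inertial type, and evaluating the local sum.

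For the local sum, I will first isolate the contribution at the distinguished prime $p$. For unramified $K$, Proposition \ref{prop:sc-unram-h0} gives dimension $1$ in Case B and $0$ in Cases A and C. For ramified $K$, which forces $p=3$, Proposition \ref{prop:sc-ram-h0} together with Proposition \ref{prop:sc-j-independence} gives $0$ in all cases. Substituting these values and the $\ShaOne$ formula into Theorem \ref{thm:gw} produces the leading $1$ in the Case B unramified formula and the purely global expression $\mathrm{ord}_\ell(\cdots)$ in the remaining cases.

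The main obstacle is the other primes $q \mid N$, $q \neq p$. Their local terms $H^0(G_q,\epsl\otimes\adz\rhobar)$ must vanish, or else be absorbed, for the formula to hold as stated. I would handle this the way Corollary \ref{cor:obstruction-criterion} does: interpret the corollary under the standing convention that $p$ is the only prime of $S$ contributing a local term. Failing that, I would add the sum over the other $q$ explicitly, as in Corollary \ref{cor:multiple-primes}. I would state this assumption explicitly, since without it the equality can fail; for instance, a Steinberg prime $q \| N$ with $q \equiv \pm1 \pmod \ell$ contributes a nonzero local term.

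With that convention in place, the proof is just additivity in the Greenberg-Wiles formula.
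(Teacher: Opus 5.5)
Your proposal is correct and follows the paper's (implicit) argument: substitute the local values from Propositions \ref{prop:sc-unram-h0} and \ref{prop:sc-ram-h0}, together with the $\ShaOne$ formula of Proposition \ref{prop:sha-formula}, into Theorem \ref{thm:gw}. Your caveat that the local terms at the other primes of $S$ must vanish is a real hypothesis that the paper leaves unstated in this corollary; for example, a Steinberg prime $q \| N$ with $q \equiv \pm 1 \pmod{\ell}$ would contribute. It is the same hypothesis the paper makes explicit in Corollary \ref{cor:obstruction-criterion}, so stating it improves the argument rather than departing from it.
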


\subsection{Summary table}
\label{subsec:sc-summary}

We collect the $H^0$ results and deformation ring presentations for all supercuspidal types in Table \ref{tab:sc-summary}.

\begin{table}[ht]
\centering
\renewcommand{\arraystretch}{1.4}
\small
\begin{tabular}{llccc}
\toprule
Type & $K$ & Case A & Case B & Case C \\
\midrule
$\tau_{\mathrm{sc},p}(u,1,e)$, $p \geq 5$
  & unram.
  & $0$
  & $1$; $\Wk[[T_i]]/\langle\ell^m T_4\rangle$
  & $0$ \\
$\tau_{\mathrm{sc},3}(-1,1,4)$
  & unram. & $0$ & N/A & $0$ \\
$\tau_{\mathrm{sc},3}(-1,2,3)$, $\otimes\varepsilon_3$
  & unram. & $0$ & N/A & $0$ \\
$\tau_{\mathrm{sc},3}(\pm 3,2,6)$
  & ram.   & $0$ & $0$ & $0$ \\
$\tau_{\mathrm{sc},3}(-3,4,6)_j$, $j=0,1,2$
  & ram.   & $0$ & $0$ & $0$ \\
\bottomrule
\end{tabular}
\caption{$\dim_\kappa H^0(G_p, \epsl\otimes\adz\rhobar)$ and local deformation rings for supercuspidal types ($\ShaOne = 0$ assumed; $m = v_\ell(p+1)$). When $H^0 = 0$, $\Runiv \cong \Wk[[T_1,T_2,T_3]]$. For $p = 3$ unramified types, Case B is N/A since $\ell \geq 5$ implies $3 \not\equiv -1 \pmod\ell$.}
\label{tab:sc-summary}
\end{table}

\begin{remark}[Illustrative ring structures]
\label{rem:sc-ring-illustration}
We record the deformation rings produced by the preceding analysis in a few representative situations. For an unramified type in Case B, take $p = 13$ and $\ell = 7$, so that $13 \equiv -1 \pmod 7$ and $m = v_7(14) = 1$; the universal deformation ring is
$$
  \Runiv \cong \Wk[[T_1,T_2,T_3,T_4]] / \langle 7T_4 \rangle
$$
with a single $7$-torsion relation (Theorem \ref{thm:sc-unram-ring}(ii)). More generally $p \equiv -1 \pmod{\ell^m}$ (but not mod $\ell^{m+1}$) produces the relation $\ell^m T_4 = 0$.

For the ramified types at $p = 3$, namely $\tau_{\mathrm{sc},3}(\pm 3, 2, 6)$ and $\tau_{\mathrm{sc},3}(-3, 4, 6)_j$ for $j = 0,1,2$, the local obstruction vanishes for all $\ell \geq 5$ (Proposition \ref{prop:sc-ram-h0}), so $\Runiv \cong \Wk[[T_1,T_2,T_3]]$ is a power series ring in each case, independently of $j$ (Proposition \ref{prop:sc-j-independence}).
\end{remark}

\section{Synthesis and Global Results}
\label{sec:synthesis}

\subsection{Overview}

We now collect the local computations of Sections \ref{sec:twisted-steinberg}-\ref{sec:supercuspidal}
and combine them with the Greenberg-Wiles formula to obtain global statements about the universal deformation ring $\Runiv$. The main result is Table \ref{tab:master}, which gives the explicit presentation of $\Runiv$ for every inertial type, every arithmetic Case, and every value of $\ShaOne$. Throughout this section, $\ShaOne$ denotes $\ShaOne(G_{\Q,S}, \epsl \otimes \adz\rhobar)$,
and $s := \dim_\kappa \ShaOne \geq 0$ is the global obstruction dimension, given by Proposition \ref{prop:sha-formula} when $f$ has rational Fourier coefficients.
\newpage

\begin{table}[ht!]
\centering
\renewcommand{\arraystretch}{1.6}
\small
\begin{tabular}{lllp{5.8cm}}
\toprule
Type & Case & $d_2$ & $\Runiv$ (with $\ShaOne = 0$) \\
\midrule
\multicolumn{4}{l}{\textit{Twisted Steinberg: 
$\tau_{\mathrm{St},p} \otimes \epsp$, $p \geq 3$}} \\
& A & $1$ & $\Wk[[T_1,T_2,T_3,T_4]]/\langle T_4(T_1-T_3)\rangle$ \\
& B & $2$ & $\Wk[[T_1,\ldots,T_5]]/\langle T_4(T_1-T_3), T_5((1+T_1)(1+T_3)-1)\rangle$ \\
& C & $3$ & $\Wk[[T_1,\ldots,T_6]]/\langle T_4(T_1-T_3), T_5((1+T_1)(1+T_3)-1), T_6(T_1-T_3)\rangle$ \\
\midrule
\multicolumn{4}{l}{\textit{Principal series:
$\tau_{\mathrm{ps},p}(1,1,e)$ ($p\geq 5$), $\tau_{\mathrm{ps},3}(1,2,3)$, $\tau_{\mathrm{ps},3}(1,2,3)\otimes\varepsilon_3$}} \\
& A & $0$ & $\Wk[[T_1,T_2,T_3]]$ \\
& B & $0$ & $\Wk[[T_1,T_2,T_3]]$ \\
& C1 ($\mu^2=1$) & $1$ & $\Wk[[T_1,T_2,T_3,T_4]]/\langle T_4(T_1-T_3)\rangle$ \\
& C2 ($\mu^2\neq 1$) & $1$ & Globally determined; see Theorem \ref{thm:ps-ring}(iii) \\
\midrule
\multicolumn{4}{l}{\textit{Supercuspidal, unramified $K$:
$\tau_{\mathrm{sc},p}(u,1,e)$ ($p\geq 5$), $\tau_{\mathrm{sc},3}(-1,\cdot,\cdot)$}} \\
& A & $0$ & $\Wk[[T_1,T_2,T_3]]$ \\
& B & $1$ & $\Wk[[T_1,T_2,T_3,T_4]]/\langle \ell^m T_4\rangle$, $m=v_\ell(p+1)$ \\
& C & $0$ & $\Wk[[T_1,T_2,T_3]]$ \\
\midrule
\multicolumn{4}{l}{\textit{Supercuspidal, ramified $K$:
$\tau_{\mathrm{sc},3}(\pm 3,2,6)$, $\tau_{\mathrm{sc},3}(-3,4,6)_j$}} \\
& A,B,C & $0$ & $\Wk[[T_1,T_2,T_3]]$ \\
\bottomrule
\end{tabular}
\caption{Universal deformation rings $\Runiv$ for all inertial types
and arithmetic Cases, assuming $\ShaOne = 0$ and (H1)-(H4).
When $\ShaOne \neq 0$, add $s = \dim_\kappa \ShaOne$ additional
relations; see Theorem \ref{thm:synthesis}.
For $p = 3$ principal series types, only Case A occurs for $\ell \geq 5$,
since $3 \equiv \pm 1 \pmod\ell$ has no solution with $\ell \geq 5$
(see \S\ref{subsec:ps-p3}).}
\label{tab:master}
\end{table}

\subsection{The synthesis theorem}

\begin{theorem}[Synthesis]
\label{thm:synthesis}
Assume \textnormal{(H1)-(H4)}, $\ell \geq 5$, and $p^2 \mid N$
for at least one prime $p \mid N$.
Let $\delta_{\mathrm{loc}}$ denote the local obstruction dimension:
$$
  \delta_{\mathrm{loc}} := \sum_{p \in S, p^2 \mid N} \dim_\kappa H^0(G_p, \epsl \otimes \adz\rhobar),
$$
as given by
Propositions \ref{prop:twisted-steinberg-h0},
\ref{prop:ps-h0}, \ref{prop:sc-unram-h0},
and \ref{prop:sc-ram-h0}
according to the inertial type at each prime.
Let $s = \dim_\kappa \ShaOne(G_{\Q,S}, \epsl \otimes \adz\rhobar)$.
Then:
\begin{enumerate}[label=\textnormal{(\roman*)}]
  \item The total obstruction dimension is $d_2 = \delta_{\mathrm{loc}} + s$.
  \item The universal deformation ring satisfies
    $$ \Runiv \cong \Wk[[T_1, T_2, T_3, T_4, \ldots, T_{3+d_2}]] / (r_1, \ldots, r_{d_2}),
    $$
    where the relations $r_1, \ldots, r_{\delta_{\mathrm{loc}}}$ are the explicit local relations given in Theorems \ref{thm:twisted-steinberg-ring}, \ref{thm:ps-ring}, \ref{thm:sc-unram-ring},
    and \ref{thm:sc-ram-ring}, and $r_{\delta_{\mathrm{loc}}+1}, \ldots, r_{d_2}$ are the global
    relations coming from $\ShaOne$, determined by the Poitou-Tate cup product pairing. The sole exception is principal series Subcase C2, where  $\delta_{\mathrm{loc}} = 1$ but the corresponding relation is not expressible in the local parameters at $p$ (see (iii) and the remark following); there the one relation counted by $\delta_{\mathrm{loc}}$ is detected globally, by local Tate duality fed by the localization of the global obstruction class.
  \item In particular:
    \begin{itemize}
      \item $\Runiv$ is a power series ring $\Wk[[T_1,T_2,T_3]]$ if and only if $d_2 = 0$, i.e., $\delta_{\mathrm{loc}} = 0$ and $s = 0$.
      \item $\Runiv$ has a single explicit local relation (as in Table \ref{tab:master}) if and only if $\delta_{\mathrm{loc}} = 1$ and $s = 0$, provided the type is not principal series Case C2.
    \end{itemize}
\end{enumerate}
\end{theorem}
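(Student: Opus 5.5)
The proof is an assembly of results already established; nothing new has to be computed. I would proceed in three steps matching parts (i)–(iii).

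\textbf{Part (i).} Apply Theorem \ref{thm:gw}, which expresses $\dim_\kappa H^2(G_{\Q,S},\adz\rhobar)$ as $s$ plus the sum over $p\in S$ of the local terms $\dim_\kappa H^0(G_p,\epsl\otimes\adz\rhobar)$. In the proof of that theorem the terms at $\infty$ and $\ell$ were shown to vanish, so the sum runs over finite $p\mid N$. It remains to discard the primes with $p\,\|\,N$, i.e. to show their local terms do not contribute beyond $\delta_{\mathrm{loc}}$.

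\begin{itemize}
\item The cleanest route is to assume, as in Corollary \ref{cor:obstruction-criterion}, that those terms vanish.
\item Alternatively, one can absorb them by citing the analysis of \cite{Bin26} for the Steinberg primes.
\end{itemize}

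I expect this bookkeeping to be the main obstacle. As stated, $\delta_{\mathrm{loc}}$ only sums over $p^2\mid N$, so in the write-up I would either add this vanishing hypothesis explicitly or redefine $\delta_{\mathrm{loc}}$ to include the $p\,\|\,N$ contributions. The identification $d_2=\dim H^2(\adz\rhobar)$ then follows from the splitting $\ad\rhobar\cong\adz\rhobar\oplus\kappa$ in \S\ref{subsec:local-global}, giving $d_2=\delta_{\mathrm{loc}}+s$. Each local summand is read off, type by type, from Propositions \ref{prop:twisted-steinberg-h0}, \ref{prop:ps-h0}, \ref{prop:sc-unram-h0} and \ref{prop:sc-ram-h0}. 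When several primes satisfy $p^2\mid N$, additivity is immediate because the Greenberg–Wiles sum is a direct sum over places.

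\textbf{Part (ii).} Invoke the general presentation from \S\ref{subsec:local-global}. Mazur's theorem with $d_1=3+d_2$ gives $\Runiv$ as a quotient of $\Wk[[T_1,\dots,T_{3+d_2}]]$ by at most $d_2$ relations. To identify the relations, use the exact sequence \eqref{eq:pt}: it splits $H^2(G_{\Q,S},\adz\rhobar)^\vee$ into a local part $\prod_p H^0(G_p,\epsl\otimes\adz\rhobar)$ and the quotient $\ShaOne$. Choose a basis of $H^2$ dual to this filtration.

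\begin{itemize}
\item The first $\delta_{\mathrm{loc}}$ relations correspond to local classes. For each type they are exactly the relations produced by the tame-inertia computations in Theorems \ref{thm:twisted-steinberg-ring}, \ref{thm:ps-ring}, \ref{thm:sc-unram-ring} and \ref{thm:sc-ram-ring}, with fresh variables for distinct primes.
\item The remaining $s$ relations come from $\ShaOne$ via the global cup product.
\end{itemize}

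The Subcase C2 exception is taken directly from Theorem \ref{thm:ps-ring}(iii). There the local class $e_1$ still counts in $\delta_{\mathrm{loc}}$, but its relation is only detected through local Tate duality applied to $\mathrm{loc}_p$ of the global class.

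\textbf{Part (iii).} Both bullets are specializations of (i) and (ii).

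\begin{itemize}
\item $\Runiv$ is formally smooth of relative dimension $3$ if and only if no relations are needed. Since the presentation is minimal ($d_1-d_2=3$ and the relation count equals $d_2$), this happens exactly when $d_2=0$. By (i), $d_2=0$ forces both $\delta_{\mathrm{loc}}=0$ and $s=0$, as both are nonnegative.
\item A single explicit local relation occurs exactly when $d_2=1$ with the relation coming from the local part, i.e. $\delta_{\mathrm{loc}}=1$, $s=0$, and the type is not C2. The shape of that relation is read off from Table \ref{tab:master}.
\end{itemize}
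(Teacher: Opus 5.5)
Your proposal follows the same route as the paper, whose proof is also a bare assembly. Part (i) comes from Theorem \ref{thm:gw} plus the local $H^0$ propositions; part (ii) from $d_1-d_2=3$, the tame-inertia relations of the type-by-type theorems, and the Poitou--Tate cup product for the $\ShaOne$ relations; and part (iii) is immediate. Your observation that the Greenberg--Wiles sum also runs over primes $p\,\|\,N$, which $\delta_{\mathrm{loc}}$ omits, is a real point the paper's proof passes over silently, and the extra vanishing hypothesis (or enlarged $\delta_{\mathrm{loc}}$) you propose is what is actually needed for (i) to hold as stated.
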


\begin{proof}
Part (i) follows directly from Theorem \ref{thm:gw} and the local $H^0$ computations of
Sections \ref{sec:twisted-steinberg}-\ref{sec:supercuspidal}. Part (ii) follows from the Euler characteristic formula $d_1 -d_2 = 3$ and the explicit relation computations in those sections; the local relations are exact by the tame inertia computations, while the global relations are
determined by the Poitou-Tate cup product (cf. \cite[\S3]{Bin26}). Part (iii) is immediate from (i) and (ii).
\end{proof}

\begin{remark}
The hypothesis that the type is not principal series Case C2 in part (iii) is essential.
In that case $\delta_{\mathrm{loc}} = 1$ (the local $H^0$ is one-dimensional) but the local deformation parameter $T_4$ is killed in $R_p^\square$, so the single relation in $\Runiv$
cannot be read off from local data at $p$. This is the only case in our classification where a nonzero $\delta_{\mathrm{loc}}$ does not yield an explicit local relation in $\Runiv$. See Theorem \ref{thm:ps-ring}(iii) and Remark \ref{rem:mu2-dichotomy} for the detailed discussion.
\end{remark}

\subsection{Multiple primes with \texorpdfstring{$p^2 \mid N$}{p2 divides N}}

We now address the case when hypothesis (H3) holds for multiple primes simultaneously.

\begin{corollary}
\label{cor:multiple-primes}
Suppose $p_1, \ldots, p_r$ are distinct primes with $p_i^2 \mid N$ for each $i$, and the remaining hypotheses \textnormal{(H1)-(H4)} hold.
Then:
\begin{enumerate}[label=\textnormal{(\roman*)}]
  \item The local obstruction dimensions add:
    $\delta_{\mathrm{loc}} = \sum_{i=1}^r\dim_\kappa H^0(G_{p_i}, \epsl \otimes \adz\rhobar)$.
  \item The total obstruction dimension is
    $d_2 = \delta_{\mathrm{loc}} + s$, and
    $$
      \Runiv \cong \Wk[[T_1, \ldots, T_{3+d_2}]] / (r_1, \ldots, r_{d_2}).
    $$
  \item The local relations at each $p_i$ are independent: the relation contributed by $p_i$ involves only the deformation parameters in directions determined by $H^0(G_{p_i}, \epsl \otimes \adz\rhobar)$, and relations at different primes involve distinct parameter sets.
\end{enumerate}
\end{corollary}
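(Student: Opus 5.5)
Parts (i) and (ii) will follow from Theorem \ref{thm:gw}, and part (iii) from the way the local relations are derived.

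\emph{Step 1: part (i).} Apply Theorem \ref{thm:gw}. The sum over $p\in S$ runs over all primes of $S$. The archimedean term vanishes because $\rhobar$ is odd, and the term at $\ell$ vanishes by ordinarity (H2). At primes $q\,\|\,N$ I would take the contributions of \cite{Bin26} as given. Under the standing convention of the Synthesis theorem, only the primes $p_i$ with $p_i^2\mid N$ are counted in $\delta_{\mathrm{loc}}$; this is the convention to make precise, for instance by assuming that $H^0$ vanishes at primes $q\,\|\,N$. Each summand $\dim_\kappa H^0(G_{p_i},\epsl\otimes\adz\rhobar)$ is then read off from Propositions \ref{prop:twisted-steinberg-h0}, \ref{prop:ps-h0}, \ref{prop:sc-unram-h0} and \ref{prop:sc-ram-h0}, according to the inertial type at $p_i$ and the Case of $p_i$ modulo $\ell$. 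Additivity is simply additivity of dimensions in the product $\prod_{p\in S}$ in \eqref{eq:pt}.

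\emph{Step 2: part (ii).} Combine (i) with the Euler characteristic count of \S\ref{subsec:local-global}, namely $d_1-d_2=3$. That count uses only $H^0(G_{\Q,S},\adz\rhobar)=0$ and the archimedean term, so it is independent of the number of ramified primes. Mazur's presentation \cite{Maz87} then gives $\Runiv$ as a quotient of $\Wk[[T_1,\dots,T_{3+d_2}]]$ by at most $d_2$ relations. Exactly $d_2$ relations are achieved by the count of Theorem \ref{thm:synthesis}(ii): $\delta_{\mathrm{loc}}$ local relations from the tame computations at each $p_i$, and $s$ global relations coming from $\ShaOne$.

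\emph{Step 3: part (iii), the main obstacle.} I expect part (iii) to be the hardest step. The plan is to use the local-global structure. The Poitou--Tate sequence \eqref{eq:pt} splits $\mathrm{Hom}(H^2,\kappa)$ so that it contains $\bigoplus_i H^0(G_{p_i},\epsl\otimes\adz\rhobar)$ as a direct sum, and local Tate duality identifies each summand with the dual of $\mathrm{loc}_{p_i}$ of $H^2$. A dual basis of $H^2(G_{\Q,S},\adz\rhobar)$ adapted to this decomposition therefore gives obstruction classes $r$ that each pair nontrivially with exactly one $p_i$.

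The extra generators $T_4,\dots$ at $p_i$ should be chosen as the coordinates of the local framed ring $R^\square_{p_i}$ that are not seen by $H^1(G_{\Q,S},\adz\rhobar)$; these are the entries of $\rho(\sigma_{p_i})$ and $\rho(\Frob_{p_i})$ used in the tame computations. The relation at $p_i$ is produced by the tame inertia relation $\rho(F_{p_i})\rho(\sigma_{p_i})\rho(F_{p_i})^{-1}=\rho(\sigma_{p_i})^{p_i}$, which only involves the restriction $\rho|_{G_{p_i}}$. Hence it only involves the parameters attached to $G_{p_i}$, and the parameter sets at different primes are disjoint.

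The delicate point is that the Frobenius diagonal parameters $T_1,T_3$ are global directions that may restrict nontrivially to several $p_i$. I would handle this by introducing, for each $i$, local copies $T_1^{(i)},T_3^{(i)}$, each expressed as a power series in the global variables. I would then assert distinctness only for the obstruction coordinates, that is the $e_j$-directions spanning $H^0(G_{p_i},\cdot)$. Finally, I would exclude principal series Subcase C2 at any $p_i$, where no local relation is visible, as in Theorem \ref{thm:ps-ring}(iii).
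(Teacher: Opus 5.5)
Your proposal follows the paper's route. The paper gets (i) and (ii) from Theorem \ref{thm:synthesis} together with additivity of the local terms in Theorem \ref{thm:gw}. It gets (iii) from the single remark that $G_{p_i}$ and $G_{p_j}$ are distinct decomposition groups.

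The caveats you raise are correct. They are not defects of your argument but hypotheses the paper's two-line proof leaves implicit.
\begin{itemize}
\item Theorem \ref{thm:gw} sums over all of $S$, so $d_2=\delta_{\mathrm{loc}}+s$ does require the terms at primes $q\,\|\,N$ to vanish. The paper imposes such a condition only in Corollary \ref{cor:obstruction-criterion}.
\item Distinct decomposition groups do not make the local parameters at different $p_i$ independent coordinates of $\Runiv$, because they are all power series in the same $3+d_2$ global variables. For instance, $\det\rho(\Frob_{p_i})$, which is what the Case B factor $(1+T_1)(1+T_3)-1$ measures, is at every $p_i$ a value of the one global determinant character. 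So two Case B twisted Steinberg primes would produce relations that both involve the determinant variable.
\item Restricting disjointness to the obstruction coordinates, and excluding Subcase C2 as Theorem \ref{thm:synthesis}(ii) itself does, is the defensible form of (iii).
\end{itemize}
One slip: the extra generators are classes in $H^1(G_{\Q,S},\ad\rhobar)$, which is where the count $3+d_2$ comes from. So they cannot be coordinates of $R^\square_{p_i}$ ``not seen by'' global $H^1$. What is attached to $p_i$ is the relation, through $\mathrm{loc}_{p_i}$ on $H^2$, as in your dual-basis argument.

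One further hypothesis is missing, from your argument and from the paper's. It enters exactly where you say the count is independent of the number of ramified primes.
\begin{itemize}
\item The difference $d_1-d_2=3$ is indeed independent of $S$. But to combine it with (i) you need $\dim_\kappa H^2(G_{\Q,S},\ad\rhobar)=\dim_\kappa H^2(G_{\Q,S},\adz\rhobar)$, i.e.\ the claim in \S\ref{subsec:local-global} that $H^2(G_{\Q,S},\kappa)=0$. That claim depends on $S$.
\item By class field theory, $\dim_\kappa H^1(G_{\Q,S},\kappa)=1+\#\{q\in S:\ q\neq\ell,\infty,\ q\equiv 1\pmod{\ell}\}$. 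The Euler characteristic of the trivial module gives $\dim_\kappa H^2(G_{\Q,S},\kappa)=\dim_\kappa H^1(G_{\Q,S},\kappa)-1$.
\item So as soon as some prime $q\in S$ other than $\ell$ is $\equiv 1\pmod{\ell}$ (in particular whenever some $p_i$ is in Case C), the determinant problem is itself obstructed. The reason is that the pro-$\ell$ part of $\mathbb{Z}_q^\times$ is then a nontrivial finite cyclic group.
\item Even under your condition at $q\,\|\,N$, one gets $\dim_\kappa H^1(G_{\Q,S},\ad\rhobar)=3+\delta_{\mathrm{loc}}+s+\dim_\kappa H^2(G_{\Q,S},\kappa)$. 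So (ii) fails as stated, and this hypothesis has to be added alongside yours.
\end{itemize}

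A related small error: the archimedean term in Theorem \ref{thm:gw} vanishes because the local group at $\infty$ is a Tate cohomology group of a module of odd order. It does not vanish because $\rhobar$ is odd. Indeed, for odd $\rhobar$, complex conjugation fixes a $2$-dimensional subspace of $\epsl\otimes\adz\rhobar$.
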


\begin{proof}
Parts (i) and (ii) follow from Theorem \ref{thm:synthesis} and the additivity of the local terms in Theorem \ref{thm:gw}. Part (iii) follows from the fact that the local deformation conditions at distinct primes $p_i, p_j \in S$ involve independent subsets of the deformation parameters, since $G_{p_i}$ and $G_{p_j}$ are distinct decomposition groups in $G_{\Q,S}$.
\end{proof}

\subsection{Three types of obstruction relations}

The computations across Sections \ref{sec:twisted-steinberg}-\ref{sec:supercuspidal} reveal three distinct types of relations in the universal deformation ring, which we now compare explicitly.

\begin{table}[ht]
\centering
\renewcommand{\arraystretch}{1.3}
\begin{tabular}{llp{4.6cm}}
\toprule
Type & Form & Source \\
\midrule
I (polynomial) &
$T_i \cdot f(T_j,T_k) = 0$ &
Twisted Steinberg; PS Case C1 \\
II ($\ell$-power torsion) &
$\ell^m T_4 = 0$ &
Supercuspidal (unram.\ $K$), Case B \\
III (globally determined) &
determined by Poitou-Tate &
PS Case C2 \\
\bottomrule
\end{tabular}
\caption{Three qualitative types of obstruction relations in $\Runiv$, arising from the computations of Sections \ref{sec:twisted-steinberg}-\ref{sec:supercuspidal}. Under $\ShaOne = 0$, Types I and II are locally determined while Type III requires global input from the Poitou-Tate cup product.}
\label{tab:relation-types}
\end{table}

\medskip
\noindent\textbf{Type I: Polynomial relations} (twisted Steinberg,
principal series Case C1). Relations of the form $T_i \cdot f(T_j, T_k) = 0$ for some power
series $f$. The prototype is $T_4(T_1 -T_3) = 0$ from Theorem \ref{thm:twisted-steinberg-ring}(i) and Theorem \ref{thm:ps-ring}(ii). Geometrically, these define reducible loci $\mathrm{Spec}\Runiv[1/\ell]$: the variety splits into components $\{T_i = 0\}$ (where the obstruction direction vanishes) and $\{f(T_j,T_k) = 0\}$ (where the Frobenius eigenvalue condition is satisfied). These arise when the local representation is not semisimple (twisted Steinberg) or when the Frobenius acts trivially on the obstruction direction (principal series, $\mu^2 = 1$).

\medskip
\noindent\textbf{Type II: $\ell$-power torsion relations}
(supercuspidal, unramified $K$, Case B).
Relations of the form $\ell^m T_4 = 0$ from Theorem \ref{thm:sc-unram-ring}(ii). Geometrically, $\Runiv$ has $\ell^m$-torsion: the obstruction direction $T_4$ is visible only in
$\mathrm{Spec}\Runiv/\ell^m$, not in the generic fiber. This arises because the generator of $H^0$ is the quadratic character $\varepsilon_K$, and the deformation in this direction has exact order $\ell^m$ in the deformation ring. Type II relations are new to this paper and have no analogue
in \cite{Bin26}.

\medskip
\noindent\textbf{Type III: Globally determined relations}
(principal series, Case C2).
The local computation gives $\delta_{\mathrm{loc}} = 1$ but the single relation in $\Runiv$ cannot be expressed using only deformation parameters at $p$. This arises when the Frobenius eigenvalue $\mu^2 \neq 1$ kills the off-diagonal local deformation parameter in $R_p^\square$ while the global obstruction remains. The relation is detected by local Tate duality at $p$, fed by the localization of the global obstruction class, and requires global input (see Theorem \ref{thm:ps-ring}(iii)).

\medskip
The three types are mutually exclusive and collectively exhaustive for the inertial types treated in this paper (under the assumption $\ShaOne = 0$): any explicit local relation is either Type I or Type II, while Type III marks exactly the cases where local data is insufficient; see Table \ref{tab:relation-types}.

\begin{remark}
The three types of relations also have different implications for the geometry of $\mathrm{Spec}\Runiv$. Type I gives a reducible generic fiber (two components meeting along $\{T_i = 0\} \cap \{f = 0\}$). Type II gives an irreducible generic fiber with a torsion obstruction in the special fiber. Type III gives an irreducible generic fiber whose scheme-theoretic structure requires global analysis to describe. These distinctions are relevant for potential applications to $R = \mathbb{T}$ theorems and to the geometry of Hecke algebras, which we leave for future work.
\end{remark}

\subsection{Global obstruction and the complete formula}

Combining all local results with the $\ShaOne$ formula of Proposition \ref{prop:sha-formula}, we obtain the most general form of our main result.

\begin{theorem}[Complete obstruction formula]
\label{thm:complete}
Let $f$ be a weight $2$ newform corresponding to an elliptic curve $E_f/\Q$ of conductor $N$, satisfying \textnormal{(H1)-(H4)}. Let $p_1, \ldots, p_r$ be the primes with $p_i^2 \mid N$. Then:
$$
  d_2 =\sum_{i=1}^r \dim_\kappa H^0(G_{p_i}, \epsl \otimes \adz\rhobar) + \mathrm{ord}_\ell\left( \frac{m_{E_f} \cdot \prod_{q \mid N} c_q(E_f)}{\#E_f(\F_\ell)^2}
\right),
$$
where the local terms are determined by Table \ref{tab:master} according to the inertial type at each $p_i$ and the arithmetic Case of $p_i$ modulo $\ell$.
\end{theorem}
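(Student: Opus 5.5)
The plan is to combine three results already in the paper: the Greenberg--Wiles identity (Theorem \ref{thm:gw}), the local computations collected in Table \ref{tab:master}, and the $\ShaOne$ formula of Proposition \ref{prop:sha-formula}. First, Theorem \ref{thm:gw} together with $d_2 = \dim_\kappa H^2(G_{\Q,S},\adz\rhobar)$ from \S\ref{subsec:local-global} gives
$$d_2 = \dim_\kappa \ShaOne(G_{\Q,S},\epsl\otimes\adz\rhobar) + \sum_{p\in S}\dim_\kappa H^0(G_p,\epsl\otimes\adz\rhobar).$$
The proof of Theorem \ref{thm:gw} already shows that the terms at $\infty$ (oddness) and at $\ell$ (ordinarity, (H2)) vanish. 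So the sum runs only over the finite primes $p \mid N$.

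Next I split the primes $p \mid N$ into two groups: those with $p_i^2 \mid N$, and those with $p \,\|\, N$. For the first group, the summands are read off, according to the inertial type from the DFV dictionary of \S\ref{subsec:inertial-types}, from Propositions \ref{prop:twisted-steinberg-h0}, \ref{prop:ps-h0}, \ref{prop:sc-unram-h0} and \ref{prop:sc-ram-h0}. This is exactly the content of Table \ref{tab:master}, and its column $d_2$ is just these $H^0$ dimensions. The type $\tau_{\mathrm{ex},2,i}$ at $p=2$ is excluded by Remark \ref{rem:excluded}. For the second group, the $H^0$ term must either be absorbed or shown to vanish.

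\textbf{The obstacle is the multiplicative primes.} By \cite{Bin26}, for $p \,\|\, N$ the $H^0$ term is the untwisted Steinberg computation. By the same eigenvalue argument as in Proposition \ref{prop:twisted-steinberg-h0}, it is nonzero (its $e_3$-summand is fixed) unless $\rhobar$ is unramified at $p$. So the displayed formula is literally correct only when these terms are absent. I would try to handle this in one of two ways: assume $N$ has no multiplicative primes, or read the displayed sum as running over all $p \mid N$ and cite \cite[Thm.~4.7]{Bin26} for the $p \,\|\, N$ contributions. I expect to adopt the first convention, namely that all primes of bad reduction have $p_i^2 \mid N$, possibly after enlarging the list $p_1,\dots,p_r$. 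This is the step needing the most care.

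Finally, I substitute Proposition \ref{prop:sha-formula} for $\dim_\kappa\ShaOne$. That proposition holds regardless of the inertial type at each $p_i$, and so applies uniformly at every prime. The two pieces then add to the stated formula. Additivity over several primes is Corollary \ref{cor:multiple-primes}(i), which needs no interaction between primes since the Greenberg--Wiles sum is termwise.
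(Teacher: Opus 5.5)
Your route is the same as the paper's. Its proof consists of Theorem~\ref{thm:gw} for $d_2=\delta_{\mathrm{loc}}+s$, Corollary~\ref{cor:multiple-primes}(i) with Table~\ref{tab:master} for $\delta_{\mathrm{loc}}$, and Proposition~\ref{prop:sha-formula} for $s$. Where you go further, you have caught a genuine gap in the paper's proof. Theorem~\ref{thm:gw} sums over every $p\in S$, while $\delta_{\mathrm{loc}}$ in Theorem~\ref{thm:synthesis} sums only over $p^2\mid N$, and the paper equates the two without comment. Since Theorem~\ref{thm:gw} is stated as an exact identity, the display fails whenever some prime $p\,\|\,N$ contributes a nonzero term. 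So neither your argument nor the paper's proves the statement as literally written. Your repair is legitimate, but say plainly that it changes the theorem. ``Enlarging the list $p_1,\dots,p_r$'' contradicts the statement's definition of the $p_i$. So either add the hypothesis that no prime divides $N$ exactly once, or let the sum run over all $p\mid N$ and take the $p\,\|\,N$ terms from the Steinberg analysis of \cite{Bin26}; the second is cleaner because it needs no new hypothesis. A milder version of the same issue occurs at $2$ when $4\mid N$. There the term stays in the sum, so the display is fine, but Remark~\ref{rem:excluded} only sets $\tau_{\mathrm{ex},2,i}$ aside and Table~\ref{tab:master} has no row at $p=2$ at all. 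So the clause ``determined by Table~\ref{tab:master}'' also needs $4\nmid N$.

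Your reason for why the $p\,\|\,N$ terms matter has the ramified/unramified dichotomy backwards, although your conclusion survives. Conjugation by $\begin{pmatrix}1&c\\0&1\end{pmatrix}$ sends $e_1\mapsto e_1-2c\,e_2$ and $e_3\mapsto e_3+c\,e_1-c^2e_2$. So for $c\neq0$ the only inertia-invariant line in $\adz\rhobar$ is $\kappa e_2$. Hence the $e_3$-line, with Frobenius eigenvalue $1$ in $\epsl\otimes\adz\rhobar$, is available only when $\rhobar$ is \emph{unramified} at $p$, and in that case the term is always nonzero. When $\rhobar$ is ramified at $p$, Frobenius acts on $e_2$ by $p^2$, so the term is nonzero if and only if $p^2\equiv1\pmod{\ell}$. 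The same computation applies to the twisted case, since a scalar factor $-1$ is invisible in the adjoint. It shows that the claim in Proposition~\ref{prop:twisted-steinberg-h0}, that $\rhobar(\sigma)$ acts trivially on $\adz\rhobar$, fails when $c_0\neq0$. So ``the same eigenvalue argument'' is not a safe template to borrow, and the twisted Steinberg row of Table~\ref{tab:master}, which both proofs cite, should be rechecked.
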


\begin{proof}
Theorem \ref{thm:gw} gives $d_2 = \delta_{\mathrm{loc}} + s$. The local term $\delta_{\mathrm{loc}}$ is the sum from Corollary \ref{cor:multiple-primes}(i), with each summand read from Table \ref{tab:master}. The global term $s = \mathrm{ord}_\ell(\cdots)$ is Proposition \ref{prop:sha-formula}.
\end{proof}

\subsection{Example}

To illustrate Theorem \ref{thm:complete}, we describe the structure of a hypothetical curve where two distinct types of obstruction are present simultaneously.

Suppose $E/\Q$ has conductor $N = p^2 q^2 M$ with $\gcd(pq, M) = 1$, where:
\begin{itemize}
  \item At $p$: inertial type twisted Steinberg $\tau_{\mathrm{St},p} \otimes \epsp$, and $p \not\equiv \pm 1 \pmod\ell$ (Case A);
  \item At $q$: inertial type supercuspidal with unramified $K$, and $q \equiv -1 \pmod\ell$ (Case B);
  \item $\ell$ is a strict congruence prime for $f$ with $\mathrm{ord}_\ell(m_E \cdot \prod c_q / \#E(\F_\ell)^2) = 1$.
\end{itemize}
Then by Theorem \ref{thm:complete}:
$$
  d_2 = \underbrace{1}_{\text{from }p,\text{ Type I}} + \underbrace{1}_{\text{from }q,\text{ Type II}} + \underbrace{1}_{\text{global, }\ShaOne} = 3,
$$
and $\Runiv \cong \Wk[[T_1,\ldots,T_6]]/(r_1, r_2, r_3)$ where $r_1 = T_4(T_1-T_3)$ (Type I, from $p$), $r_2 = \ell^m T_5$ (Type II, from $q$, $m = v_\ell(q+1)$), and $r_3$ is a global relation from $\ShaOne$ not determined by the local computations alone. This illustrates the independence of the three relation types asserted in Corollary \ref{cor:multiple-primes}(iii).

\subsection{Remaining open questions}

Several natural questions are left open by this paper and are the subject of ongoing work.

\medskip
\noindent\textbf{(a) The principal series Case C2.}
When $\mu^2 \neq 1$ in $\kappa$, the relation in $\Runiv$ is Type III (globally determined). An explicit description of this relation in terms of the Hecke algebra or the Bloch-Kato Selmer group would complete the Case C2 picture.

\medskip
\noindent\textbf{(b) The exceptional supercuspidal case $p = 2$.}
The types $\tau_{\mathrm{ex},2,i}$ were excluded throughout this paper. Their treatment requires the explicit description of these representations from \cite[\S7]{DFV22} and is expected to produce new relations. Nevertheless, the main difficulty will be applicability of earlier results and theorems in the literature as most of them usually exclude the case $p=2$.

\medskip
\noindent\textbf{(c) Higher weight.}
We made the restriction to weight $2$ in order to connect with the elliptic curve framework and to use the Agashe-Ribet-Stein formula for $\ShaOne$. The local computations of Sections \ref{sec:twisted-steinberg}-\ref{sec:supercuspidal} are largely weight-independent, and we expect the main results to extend to weight $k \geq 2$ with appropriate modifications to the $\ShaOne$ formula.

\end{document}